\documentclass[leqno,twoside, 11pt]{amsart}
\usepackage{amssymb, latexsym, esint}
\usepackage{color}

\usepackage{amsthm}
\usepackage{amssymb,amsfonts}
\usepackage[all,arc]{xy}
\usepackage{enumerate}
\usepackage{mathrsfs}
\usepackage{amsmath}
\usepackage{verbatim}

\setlength{\hoffset}{-1cm}
\setlength{\voffset}{-1cm}
\setlength{\textwidth}{16cm}
\setlength{\textheight}{22cm}

\theoremstyle{plain}
\numberwithin{equation}{section} \numberwithin{figure}{section}

\newtheorem{theorem}{Theorem}[section]
\newtheorem{lemma}[theorem]{Lemma}
\newtheorem{proposition}[theorem]{Proposition}

\newtheorem{definition}[theorem]{Definition}
  \usepackage{epsfig} 
\usepackage{graphicx}  \usepackage{epstopdf}
\theoremstyle{definition}
\newtheorem{remark}[theorem]{Remark}

\numberwithin{equation}{section}
\makeatletter
\@namedef{subjclassname@2020}{\textup{2020} Mathematics Subject Classification}
\makeatother

\begin{document}

\title[Magnetic Kirchhoff equations in Orlicz-Sobolev spaces]{Existence and multiplicity of  solutions to magnetic Kirchhoff equations in Orlicz-Sobolev spaces}

\author{Pablo Ochoa}

\address{P. Ochoa
\hfill\break\indent Facultad de Ingenier\'ia.
 \hfill\break\indent Universidad Nacional de Cuyo-CONICET.
 \hfill\break\indent Mendoza, Argentina.}
 \email{{\tt pablo.ochoa@ingenieria.uncuyo.edu.ar}}

\begin{abstract}
In this paper, we study the existence and multiplicity of weak solutions to a general type of Kirchhoff equations in magnetic fractional Orlicz-Sobolev spaces. Specifically, we appeal to Critical Point Theory to prove the existence of non-trivial solutions under the so-called  Ambrosetti-Rabinowitz condition. We also state the existence of ground-state solutions. Moreover,  multiplicity results which yield the existence of an unbounded sequence of solutions are also provided. Finally, we show existence under a weak-type Ambrosetti-Rabinowitz condition formulated  in the framework of Orlicz spaces.

\end{abstract}

\keywords{Fractional magnetic operators, Orlicz-Sobolev spaces, g-Laplace operator, Schr\"{o}dinger-Kirchhoff equations}

\subjclass[2020]{46E30, 35R11, 45G05}

\maketitle
\section{Introduction}
In this work, we consider magnetic Kirchhoff equations involving the fractional operator $\left( -\Delta_g^{A}\right)^{s}$  defined for a  smooth function $u: \mathbb{R}^{N}\to \mathbb{C}$ as
\begin{equation}\label{mag operator}
\left( -\Delta_g^{A}\right)^{s}u(x):=\text{P.V. }\int_{\mathbb{R}^{N}}\dfrac{g\left( |D_s^{A}u(x, y)| \right)}{ |D_s^{A}u(x, y)|} D_s^{A}u(x, y)\dfrac{dy}{|x-y|^{N+s}}\quad x \in \mathbb{R}^{N},
\end{equation}where $s \in (0,1)$, $A: \mathbb{R}^{N}\to \mathbb{R}^{N}$ is a given vector field,  $g=G'$ is the derivative of an N-function, and
$$D_s^{A}u(x, y)=\dfrac{u(x)-e^{i(x-y)\cdot A\left(\frac{x+y}{2}\right)}u(y)}{|x-y|^{s}}.$$Also, P. V. stands for `in principal value'. We refer the reader to Section \ref{preliminares} for details regarding $N$-functions. The operator  $\left( -\Delta_g^{A}\right)^{s}$ is called the magnetic fractional $g$-Laplacian and it arises as the gradient of the non-local magnetic energy functional:

 $$\rho _{s, G}^{A}(u):=\int_{\mathbb{R}^{2N}}G\left( |D_s^{A}u(x, y)|\right)\,d\mu,$$in the sense that
 $$\lim_{t \to 0}\dfrac{\rho_{s, G}^{A}(u+tv)-\rho_{s, G}^{A}(u)}{t}=\mathcal{R}\left(\int_{\mathbb{R}^{2N}}\dfrac{g\left( |D_s^{A}u(x, y)|\right)}{|D_s^{A}u(x, y)|}D_s^{A}u(x, y)\overline{D_s^{A}v(x, y)}\,d\mu \right)=\left\langle \left( -\Delta_g^{A}\right)^{s}u, v \right\rangle.$$Here,  we use the notation
$$d\mu=\dfrac{dx\,dy}{|x-y|^{N}}.$$

Moreover, according to \cite[Theorem 1.4]{FSMag}, the operator $\left( -\Delta_g^{A}\right)^{s}$ may be seen as an approximation of the local operator $-\Delta_{\overline{g}}^{A}$ arising as the gradient of the energy functional
$$\mathcal{I} _{\overline{G}}^{A}(u):=\int_{\mathbb{R}^{N}}\overline{G}\left(|\nabla u-iAu|\right)dx,$$
where $\overline{g}= \overline{G}'$ and $\overline{G}$ is the spherical limit of $G$ (see \cite{FSMag}). For instance, when $G(t)=|t|^{p}$, $p>1$, the spherical limit of $G$ is $\overline{G}(t)=K_{N, p}|t|^{p}$, where $K_{N, p}:= \int_{\mathbb{S}^{N-1}}|x_n|^{p}d\sigma/p$.  More examples are given in \cite{BS}. The approximation is in the following sense: for each $0 <s<1$ and appropriate $f$, let $u_s$ be the unique solution of
$$ \left( -\Delta_g^{A}\right)^{s}=f \quad \text{in }\Omega \subset \mathbb{R}^{N}, \quad u_s=0 \quad \text{in }\mathbb{R}^{N}\setminus \Omega,$$then as $s \to 1$, $u_s \to u$ in $L^{G}(\Omega, \mathbb{C})$, where $u$ satisfies the local equation
$$-\Delta_{\overline{g}}^{A}u=f,\quad \text{in }\Omega \subset \mathbb{R}^{N}, \quad u=0 \quad \text{on }\partial \Omega.$$When $G(t)=|t|^{2}$, the derivative of the corresponding energy functional $\mathcal{I} _{\overline{G}}^{A}$ gives rise to the (real part of the) magnetic Schr\"{o}dinger operator defined as
\begin{equation}\label{local Schrodinger}
-(\nabla -iA)^{2}u(x)=-\Delta u(x) +2iA(x)\cdot\nabla u
+|A(x)|^{2}u+ iu\text{ div}A(x).
\end{equation}
The magnetic Schr\"{o}dinger operator  has been studied extensively in the last decades. We refer the reader to the references \cite{AS}, \cite{CS}, \cite{BD}, \cite{ST}, among many others. 

Observe that when  $A=0$ and $u$ is a real function in \eqref{mag operator}, we recover the now familiar fractional $g$-Laplacian introduced in \cite{BS} and defined as
\begin{equation*}
            (-\Delta_g)^s u (x) = \text{P.V.} 
            \int_{\mathbb{R}^{N}}g
             \Big ( \frac{u(x)-u(y)}{|x-y|^s} \Big ) \frac{dy}
            {|x-y|^{N+s}}
    \end{equation*}for a sufficiently smooth function $u$. Observe that when $g(t)=t$, we get the fractional Laplacian $(-\Delta)^{s}$. Problems with non local diffusion that involve integro-differential operators 
	have been intensively studied in the last years. These nonlocal operators 
	appear when we model different physical situations as anomalous diffusion and 
	quasi-geostrophic flows, turbulence and water waves, molecular dynamics and 
	relativistic quantum mechanics of stars (see \cite{BG,C} and 
	{the references therein}). They also appear  in mathematical finance 
	\cite{A}, elasticity  problems \cite{Si}, 
	phase transition problems  \cite{AB} and crystal dislocation structures
	\cite{T}, among others. We also point out that the operator \eqref{mag operator} constitutes a fractional relativistic generalization of the magnetic Laplacian (see \cite{I1}, \cite{I2} and the nice introduction of \cite{DS}).

	In this work, we consider  global Kirchhoff magnetic equations in Orlicz-Sobolev spaces of the form:\begin{equation}\label{main eq}
M\left( \rho_{s, G}^{A}(u)\right)\left( -\Delta_g^{A}\right)^{s}u + V(x)\dfrac{g(|u|)}{|u|}u = f(x, |u|)u \quad \text{in }\mathbb{R}^{N},
\end{equation}where $V: \mathbb{R}^{N}\to \mathbb{R}$ is the  potential, $f=f(x, t)$ is a given function, and $M: \mathbb{R}_+ \to \mathbb{R}_+$ is the Kirchhoff (tension) function. Kirchhoff \cite{K} introduced a model for a vibrating string given by
$$\rho\frac{\partial^{2}u }{\partial t^{2}}u -\left(\frac{\rho_0}{h}+\frac{E}{2L}\int_0^{L}\big\vert\frac{\partial u }{\partial x} \big\vert\right)\frac{\partial^{2}u }{\partial x^{2}}u=0,$$where $L$ is the length of the string, $h$ the area of the cross-section, E is the Young modulus of the material, $\rho$ is the mass density, and $\rho_0$ the initial tension.  That equation constitutes an extension of the D'Alembert's classical wave equation, since it is taken into account the changes in length of the string under vibration. There have been further generalizations of the Kirchhoff model. In particular, we  quote the nice article \cite{FV}, where the authors studied the existence of non-negative weak solutions for a nonlocal Kirchhoff type problem of the form
$$M\left( \int_{\mathbb{R}^{2N}}\dfrac{|u(x)-u(y)|^{2}}{|x-y|^{N+2s}}dxdy\right)(-\Delta)^{s}u = \lambda f(x, u)+|u|^{2^{*}-2}u.$$In this case, the tension $M$ depends on the length of the string via the fractional norm of $u$.

Some related works to our problem are the following. In \cite{AS}, the authors analysed the existence and multiplicity of solutions to local models like \eqref{main eq} with $M=1$, $A=0$, and involving the Laplacian operator. Related problems in Orlicz spaces have been considered in \cite{FS} and \cite{FSj}, where the main operator is the local g-Laplacian.  Also, in the reference \cite{FV}, the authors considered critical Kirchhoff equations with the fractional Laplacian. There are many other references considering Kirchhoff problems, however, in the next lines, we will focus on the literature more related to the magnetic framework.  Up to our knowledge, the first mathematical contribution to the study of the operator \eqref{mag operator} when $G(t)=|t|^{2}$ was \cite{DS}. There, the authors provide the existence of solutions to minimization problems which in turn implies the solvability of equations like
	$$(-\Delta^{A})^{s}u+u=|u|^{p-2}u \quad \text{in }\mathbb{R}^{N}.$$Afterwards, in \cite{MPSZ}, the authors state existence of global solutions  and   multiplicity results for magnetic  Kirchhoff equations involving the fractional Laplacian via critical point theory.  We also quote the work \cite{FPV}, where they deal with the existence and multiplicity of solutions to boundary value problems in bounded domains driven by the magnetic fractional Laplacian. 
	
	In all the previous work, it is often assumed the Ambrosetti-Rabinowitz condition (AR condition in brief) stated firstly in \cite{AR}, which reads as follows: there exists $\mu >0$ such that
  $$0 < \mu F(x, |u|) \leq f(x, |u|)u^{2}, \quad u \neq 0,$$where $F$ is the primitive of $uf(x, u)$.  Finally, we  point out that in  \cite{P}, the author shows the existence of multiple solutions to magnetic Kirchhoff problems with the p-Laplacian and appealing tow a weak AR condition. 
	
	The functional framework to deal with problems like \eqref{main eq} are the magnetic fractional Orlicz-Sobolev spaces. We refer to the recent work \cite{FSMag}, where the authors provide the basic definitions and properties of these spaces for Lipschitz magnetic fields $A$.  They also give a Bourgain-Brezis-Mironescu type formula and study solutions to the non-local magnetic Laplacian. This study was complemented in the reference \cite{MSV}, where  the Maz'ya-Shaposhnikova formula was provided.

The main results of the present paper are the following. They constitute extensions of the findings of \cite{MPSZ} and \cite{FPV}. We refer to the reader to Section \ref{assumptions sec} for the statements of the assumptions. 
\begin{theorem}\label{existence}
Assume the hypothesis $(Hf)_1-(Hf)_3$, $(HM)_1-(HM)_2$ and $(V)$. Then, there is at least one non trivial weak solution $u \in W_{A, V}^{s, G}(\mathbb{R}^{N}, \mathbb{C})$ to equation \eqref{main eq}.
\end{theorem}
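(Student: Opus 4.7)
The plan is to obtain the solution as a critical point of the associated energy functional via the Mountain Pass Theorem. Define
$$J(u) = \widehat{M}\bigl(\rho_{s,G}^{A}(u)\bigr) + \int_{\mathbb{R}^{N}} V(x)\,G(|u|)\,dx - \int_{\mathbb{R}^{N}} F(x,|u|)\,dx,$$
where $\widehat{M}(t) = \int_{0}^{t} M(\tau)\,d\tau$ and $F$ is the primitive of $tf(x,t)$ indicated in the introduction. First I would check, using the differentiability of $\rho_{s,G}^{A}$ recalled in the excerpt together with the growth estimates on $g$ and $f$ encoded in $(Hf)_{1}$--$(Hf)_{2}$, that $J \in C^{1}(W_{A,V}^{s,G}(\mathbb{R}^{N},\mathbb{C}),\mathbb{R})$ and that critical points are exactly the weak solutions of \eqref{main eq}.

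Next I would verify the mountain pass geometry. For the local part, using the lower bound on $M$ provided by $(HM)_{1}$, the modular/norm equivalence in Orlicz-Sobolev spaces, and the subcritical growth in $(Hf)_{1}$ with Sobolev-type embeddings for $W_{A,V}^{s,G}(\mathbb{R}^{N},\mathbb{C})$, one shows that $J(u)\geq \alpha>0$ on a small sphere $\|u\|=\rho$. For the far condition, I would use the Ambrosetti--Rabinowitz inequality from $(Hf)_{3}$ together with the growth condition $(HM)_{2}$ on $\widehat{M}$ (typically $\widehat{M}(t) \leq \theta^{-1}M(t)t$ for some $\theta$ strictly below the AR constant $\mu$) to see that, fixing any $u_{0}\not\equiv 0$, $J(tu_{0})\to -\infty$ as $t\to+\infty$, producing the required $e$ with $J(e)<0$.

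The main technical obstacle is the Palais--Smale (or Cerami) condition, since the problem is posed on the whole space $\mathbb{R}^{N}$ and lacks compactness of the embeddings into $L^{G}$ in general. Given a $(PS)_{c}$ sequence $\{u_{n}\}$, I would first combine $(Hf)_{3}$ and $(HM)_{2}$ in the standard way to bound
$$c + o(\|u_{n}\|) \geq J(u_{n}) - \tfrac{1}{\mu}\langle J'(u_{n}),u_{n}\rangle$$
and deduce boundedness of $\rho_{s,G}^{A}(u_{n})$ and $\int V G(|u_{n}|)\,dx$, hence of $\|u_{n}\|$ in $W_{A,V}^{s,G}(\mathbb{R}^{N},\mathbb{C})$. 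Compactness of the embedding (coming from the coercivity/behaviour of $V$ postulated in $(V)$, which is the role of that hypothesis) then gives $u_{n}\to u$ in a suitable Orlicz space. The delicate step is passing to the limit in the nonlocal magnetic term: one uses Brezis--Lieb-type arguments and the $(S_{+})$ property of $\left(-\Delta_{g}^{A}\right)^{s}$, together with the continuity and positivity of $M$ from $(HM)_{1}$, to upgrade weak convergence to strong convergence in $W_{A,V}^{s,G}$.

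With mountain pass geometry and the $(PS)_{c}$ condition in hand, the Mountain Pass Theorem yields a critical point $u$ at the level
$$c = \inf_{\gamma\in\Gamma}\max_{t\in[0,1]} J(\gamma(t)) \geq \alpha > 0,$$
where $\Gamma = \{\gamma\in C([0,1], W_{A,V}^{s,G}) : \gamma(0)=0,\ \gamma(1)=e\}$. Since $J(u)=c>0=J(0)$, the solution $u$ is non-trivial, completing the proof.
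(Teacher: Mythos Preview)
Your proposal is correct and follows essentially the same route as the paper: the functional $\mathcal{I}$ is shown to be $C^{1}$, the mountain pass geometry is established via $(HM)_2$, $(Hf)_1$--$(Hf)_3$ and the embeddings, and the Palais--Smale condition is verified using the combination $\mathcal{I}(u_n)-\tfrac{1}{\mu}\langle\mathcal{I}'(u_n),u_n\rangle$ for boundedness, the compactness coming from $(V)$, a Brezis--Lieb argument for the $f$-term, and an $(S_+)$-type inequality for the nonlocal magnetic operator. One small correction: the lower bound for the local geometry near the origin comes from $(HM)_2$ (which yields $\mathcal{M}(t)\geq \mathcal{M}(1)t^{\theta}$ for $t\leq 1$), not from $(HM)_1$, which only controls $M$ away from zero.
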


The next result accounts for the existence of ground-state solutions to \eqref{main eq}. We introduce the energy functional associated to \eqref{main eq}:
\begin{equation*}
\mathcal{I}(u):=\mathcal{M}\left( \rho_{s, G}^{A}(u)\right) + \rho_{G, V}(u)-\int_{\mathbb{R}^{N}}F(x, |u|)\,dx, \quad u \in W_{A, V}^{s, G}(\mathbb{R}^{N}, \mathbb{C}),
\end{equation*}where $\mathcal{M}$ is the primitive of $M$.
\begin{theorem}\label{groud state}
Under the assumptions $(Hf)_1-(Hf)_3$, $(HM)_1-(HM)_3$ and $(V)$, there is a weak solution $u_0 \in W_{A, V}^{s, G}(\mathbb{R}^{N}, \mathbb{C})$ to  \eqref{main eq} such that
$$\mathcal{I}(u_0)= \inf\left\lbrace \mathcal{I}(u): \mathcal{I}'(u)=0, u \neq 0\right\rbrace.$$
\end{theorem}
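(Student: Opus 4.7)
The plan is to minimize $\mathcal{I}$ directly over the set of nontrivial critical points. Set
$\mathcal{K} := \{u \in W_{A,V}^{s,G}(\mathbb{R}^{N}, \mathbb{C}) \setminus \{0\} : \mathcal{I}'(u) = 0\}$; by Theorem \ref{existence} this set is non-empty, so $c_* := \inf_{u \in \mathcal{K}} \mathcal{I}(u)$ is well-defined. The first task is to prove $c_* > 0$. For any $u \in \mathcal{K}$, combining the identity $\mathcal{I}'(u) u = 0$ with the AR condition $(Hf)_3$ and the hypotheses $(HM)_1$--$(HM)_2$ on the Kirchhoff coefficient, one derives in the standard way a coercive bound of the form $\mathcal{I}(u) \geq C\,\Phi(\|u\|)$ with $\Phi$ a continuous function positive on $(0,\infty)$. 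A uniform lower bound $\|u\| \geq \delta > 0$ valid on $\mathcal{K}$, obtained by testing $\mathcal{I}'(u) u = 0$ against the subcritical Orlicz embeddings of $W_{A,V}^{s,G}$ and using the growth conditions $(Hf)_1$--$(Hf)_2$, then yields $c_* \geq C\Phi(\delta) > 0$.

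Next, I select a minimizing sequence $\{u_n\} \subset \mathcal{K}$ with $\mathcal{I}(u_n) \to c_*$. Since $\mathcal{I}(u_n)$ is bounded, the coercive bound above implies that $\{u_n\}$ is bounded in $W_{A,V}^{s,G}(\mathbb{R}^{N}, \mathbb{C})$, so up to a subsequence $u_n \rightharpoonup u_0$ weakly. The coercivity of the potential encoded in $(V)$ provides compact embeddings into subcritical Orlicz spaces, so $u_n \to u_0$ strongly there and a.e.\ in $\mathbb{R}^{N}$; together with the growth of $f$ this allows passing to the limit in both $\int_{\mathbb{R}^{N}} F(x, |u_n|)\,dx$ and, in the dual sense, in the nonlinearity $f(x, |u_n|) u_n$ tested against any fixed $v$.

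The main obstacle is the Kirchhoff factor $M(\rho_{s,G}^A(u_n))$, whose passage to the limit requires pointwise convergence of the magnetic energies $\rho_{s,G}^A(u_n)$, i.e., strong convergence in the magnetic $(s,G)$-Gagliardo seminorm rather than only weak convergence. I plan to obtain this by a Brezis--Lieb type identity adapted to the magnetic fractional $g$-seminorm, coupled with the standard trick of testing $\mathcal{I}'(u_n)(u_n - u_0) \to 0$; the structural hypothesis $(HM)_3$, absent from Theorem \ref{existence}, is expected to play the decisive role here by ensuring non-degeneracy of the coefficient $M(\rho_{s,G}^A(u_n))$ and hence the conversion of weak to strong convergence in the principal term. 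Once $\rho_{s,G}^A(u_n) \to \rho_{s,G}^A(u_0)$ is established, continuity of $M$ together with the weak convergence of $(-\Delta_g^A)^s u_n$ in the dual yields $\mathcal{I}'(u_0) = 0$.

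It remains to verify $u_0 \neq 0$ and the attainment. If $u_0 = 0$, the strong Orlicz convergence and growth of $f$ would force $\int_{\mathbb{R}^{N}} f(x, |u_n|) |u_n|^2\,dx \to 0$, and then $\mathcal{I}'(u_n) u_n = 0$ would force $\|u_n\| \to 0$, contradicting the uniform lower bound $\|u_n\| \geq \delta$. Hence $u_0 \in \mathcal{K}$. Finally, weak lower semicontinuity of the convex modulars $u \mapsto \mathcal{M}(\rho_{s,G}^A(u))$ and $u \mapsto \rho_{G,V}(u)$, combined with the convergence of the $F$-integral, gives $\mathcal{I}(u_0) \leq \liminf_n \mathcal{I}(u_n) = c_*$, while $c_* \leq \mathcal{I}(u_0)$ holds by definition, so $u_0$ is the desired ground state.
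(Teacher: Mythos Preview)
Your overall architecture---minimize $\mathcal{I}$ over the set $\mathcal{K}$ of nontrivial critical points, show the infimum is positive, and show it is attained---matches the paper. But you misplace the role of $(HM)_3$, and this creates an actual gap in your first step while simultaneously over-complicating the convergence step.

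\textbf{The gap.} You claim the uniform lower bound $\|u\|\ge\delta>0$ on $\mathcal{K}$ follows from testing $\mathcal{I}'(u)u=0$ using only $(Hf)_1$--$(Hf)_2$ and $(HM)_1$--$(HM)_2$. It does not. From $\mathcal{I}'(u)u=0$ and \eqref{G1} one obtains roughly
\[
p^{-}\,M\!\bigl(\rho_{s,G}^{A}(u)\bigr)\,\rho_{s,G}^{A}(u)+\Bigl(p^{-}-\tfrac{\varepsilon C}{V_0}\Bigr)\rho_{G,V}(u)\;\le\;C_\varepsilon\,\rho_B(u)\;\le\;C_\varepsilon'\,\bigl(\|u\|_{s,G,V}^{A}\bigr)^{p_B^{-}}.
\]
With only $(HM)_1$--$(HM)_2$, the factor $M(\rho)$ carries no lower bound when $\rho=\rho_{s,G}^{A}(u)$ is small, so the seminorm part $[u]_{s,G}^{A}$ is uncontrolled and no lower bound on the full norm follows. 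The paper invokes precisely $(HM)_3$ here: $M(t)\ge c_0 t^{\theta-1}$ on $[0,1]$ gives $M(\rho)\rho\ge c_0\rho^{\theta}\ge c_0\bigl([u]_{s,G}^{A}\bigr)^{p^{+}\theta}$, and then the exponent gap $p^{+}\theta<p_B^{-}$ from $(HM)_2$ yields $\|u\|_{s,G,V}^{A}\ge\delta>0$. This is exactly the content of the paper's contradiction argument showing $c_m>0$; $(HM)_3$ is what distinguishes Theorem~\ref{groud state} from Theorem~\ref{existence}.

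\textbf{The unnecessary complication.} Conversely, $(HM)_3$ is \emph{not} needed for the strong convergence of a minimizing sequence. Since every $u_n\in\mathcal{K}$ has $\mathcal{I}'(u_n)=0$ and $\mathcal{I}(u_n)\to c_*$ is bounded, $\{u_n\}$ is trivially a Palais--Smale sequence; Lemma~\ref{ps condicion} (proved with $(HM)_1$--$(HM)_2$ only) gives a strongly convergent subsequence $u_n\to u_0$ in $W_{A,V}^{s,G}(\mathbb{R}^{N},\mathbb{C})$ directly. Then $\mathcal{I}'(u_0)=0$ and $\mathcal{I}(u_0)=c_*$ follow from $\mathcal{I}\in C^{1}$, with no Brezis--Lieb identity, no separate treatment of the Kirchhoff coefficient, and no weak lower semicontinuity step required.
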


Regarding multiplicity of solutions, we have the next theorem.
\begin{theorem}\label{multiplicity}
If  $(Hf)_1-(Hf)_3$, $(HM)_1-(HM)_2$ and $(V)$ hold, then there exists an unbounded  sequence of weak solutions to \eqref{main eq}.
\end{theorem}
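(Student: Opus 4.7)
The plan is to deduce Theorem \ref{multiplicity} from the Fountain Theorem of Bartsch, applied to the energy functional $\mathcal{I}$ associated with \eqref{main eq}. This is the natural symmetric analogue of the variational scheme underlying the proof of Theorem \ref{existence}, and it is the standard route used in \cite{MPSZ} in the Hilbert setting, which one must adapt here to the Orlicz-Sobolev framework.

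First I would check that $\mathcal{I}$ is an even $C^{1}$ functional on $X:=W_{A, V}^{s, G}(\mathbb{R}^{N}, \mathbb{C})$. Evenness follows because each of the three building blocks depends on $u$ only through $|D_s^{A}u|$, $|u|$, and $F(x, |u|)$, so $\mathcal{I}(-u)=\mathcal{I}(u)$; regularity and the expression of $\mathcal{I}'$ have already been set up for Theorem \ref{existence}. I would then reuse (with no changes) the Palais--Smale compactness analysis developed for Theorems \ref{existence} and \ref{groud state}: under $(Hf)_1$--$(Hf)_3$ together with $(HM)_1$--$(HM)_2$, the Ambrosetti--Rabinowitz-type conditions force every (PS)$_c$ sequence to be bounded in $X$, and the potential $V$ via $(V)$ provides the coercivity/compactness needed to extract a strongly convergent subsequence through the compact embedding of $X$ into the appropriate Orlicz space $L^{\Phi}(\mathbb{R}^{N},\mathbb{C})$.

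Next, since $X$ is a reflexive separable Banach space, I would fix a Schauder basis $\{e_j\}$ and set
\begin{equation*}
Y_k:=\bigoplus_{j=1}^{k}\mathbb{R}e_j, \qquad Z_k:=\overline{\bigoplus_{j=k}^{\infty}\mathbb{R}e_j},
\end{equation*}
and verify the two geometric hypotheses of the Fountain Theorem:
\begin{enumerate}
\item[(A1)] There exists $\rho_k>r_k>0$ with $a_k:=\max_{u\in Y_k,\,\|u\|=\rho_k}\mathcal{I}(u)\le 0$.
\item[(A2)] $b_k:=\inf_{u\in Z_k,\,\|u\|=r_k}\mathcal{I}(u)\to +\infty$ as $k\to\infty$.
\end{enumerate}
For (A1), on the finite-dimensional subspace $Y_k$ all norms are equivalent, and the AR condition $(Hf)_3$ together with $(HM)_1$--$(HM)_2$ implies that $F(x,|u|)$ grows strictly faster than $G(|u|)$ and $\mathcal{M}(\rho_{s,G}^{A}(u))$ at infinity along any ray; thus $\mathcal{I}(tu)\to-\infty$ as $t\to\infty$ uniformly on the unit sphere of $Y_k$, giving the required $\rho_k$. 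For (A2), the key quantitative ingredient is the asymptotic decay
\begin{equation*}
\beta_k:=\sup_{u\in Z_k,\,\|u\|=1}\|u\|_{L^{\Phi}}\longrightarrow 0 \quad\text{as }k\to\infty,
\end{equation*}
which follows from the compactness of the embedding $X\hookrightarrow L^{\Phi}(\mathbb{R}^{N},\mathbb{C})$ for any subcritical $N$-function $\Phi$ dominating the nonlinearity. Combined with the modular estimates governing the $g$- and $G$-terms on high-frequency components, this produces a radius $r_k\to\infty$ on which $\mathcal{I}(u)\ge c\,G(r_k)-o(1)F_\ast(r_k)\to+\infty$.

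The main obstacle I anticipate is step (A2): controlling the Kirchhoff term $\mathcal{M}(\rho_{s,G}^{A}(u))$ from below and the superposition $F(x,|u|)$ from above simultaneously in an Orlicz-Sobolev scale, where one cannot rely on a single homogeneity exponent. This forces one to track both branches of the $\Delta_2$-type inequalities for $G$ and $F$, choosing $r_k$ in the regime where the subcritical Orlicz embedding constant $\beta_k$ is small enough to offset the non-homogeneous nonlinear growth; once these estimates are in place, the Fountain Theorem yields a sequence $\{u_n\}\subset X$ of critical points of $\mathcal{I}$ with $\mathcal{I}(u_n)\to+\infty$, which, by the relation between the energy level and the norm under the AR condition, is necessarily unbounded in $X$.
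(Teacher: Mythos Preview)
Your Fountain Theorem approach is viable but differs from the route the paper takes. The paper instead invokes Rabinowitz's symmetric Mountain Pass Theorem \cite[Theorem 9.12]{AR}, with the trivial decomposition $\mathcal{B}^{-}=\{0\}$ and $\mathcal{B}^{+}=W^{s,G}_{A,V}(\mathbb{R}^{N},\mathbb{C})$. That abstract result needs only two geometric inputs: (i) the mountain-pass lower bound on a small sphere of the \emph{whole} space, which is exactly Lemma~\ref{MP 1} already proved for Theorem~\ref{existence}; and (ii) that $\mathcal{I}\le 0$ outside a large ball on every finite-dimensional subspace, which coincides with your condition (A1) and follows in a few lines from the AR inequality $(Hf)_3$ via \eqref{F ineq} and equivalence of norms in finite dimensions. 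Thus the paper entirely bypasses your step (A2): there is no need for a Schauder (or Markushevich) basis on $X$, nor for the decay $\beta_k\to 0$ of the embedding constants on $Z_k$. Your route should also succeed---the compact embedding of Theorem~\ref{compact} does yield $\beta_k\to 0$---but it requires more structural input on the magnetic Orlicz--Sobolev space (existence of an appropriate basis is not established in the paper) and the non-homogeneous modular balancing you flag as the main obstacle, whereas the paper's argument is essentially a direct reduction to lemmas already in hand from Section~\ref{SEC 2}.
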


Finally, we mention the following existence result under a weak Ambrosetti-Rabinowitz condition $(Hf)'_3$. However, we will need to impose an additional assumption on the primitive of $f$ (see Section \ref{SEC 6}). 
\begin{theorem}\label{existence2}
Assume that the hypothesis $(Hf)_1, (Hf)_2$, $(Hf)'_3$, $(HM)_1-(HM)_2$ and $(V)$. Then, there is a non trivial weak solution $u \in W_{A, V}^{s, G}(\mathbb{R}^{N}, \mathbb{C})$ to equation \eqref{main eq}.
\end{theorem}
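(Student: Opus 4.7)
The strategy is to apply a variant of the Mountain Pass Theorem that works under a weaker compactness hypothesis (Cerami sequences) to the energy functional
\[
\mathcal{I}(u) = \mathcal{M}\bigl(\rho_{s,G}^A(u)\bigr) + \rho_{G,V}(u) - \int_{\mathbb{R}^N} F(x,|u|)\,dx
\]
defined on $W_{A,V}^{s,G}(\mathbb{R}^N,\mathbb{C})$. First I would verify that $\mathcal{I}$ still possesses the Mountain Pass geometry under the weaker condition $(Hf)'_3$: this part should follow the same lines as in Theorem \ref{existence}, since $(Hf)_1$--$(Hf)_2$ alone suffice to control $F(x,|u|)$ by a superlinear Orlicz-type expression, and the Kirchhoff hypotheses $(HM)_1$--$(HM)_2$ together with $(V)$ give a strictly positive lower bound on small spheres as well as negativity along a suitable fixed ray.

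The core difficulty is the loss of the classical Ambrosetti--Rabinowitz control $\mu F \le f(x,|u|)u^{2}$. Under $(Hf)'_3$, one cannot anymore combine $\mathcal{I}(u_n)-\frac{1}{\mu}\langle \mathcal{I}'(u_n),u_n\rangle$ in the standard way to bound a Palais--Smale sequence. To get around this, I would invoke the Mountain Pass Theorem in the Cerami version to produce a sequence $\{u_n\}\subset W_{A,V}^{s,G}(\mathbb{R}^N,\mathbb{C})$ at the mountain pass level $c>0$ satisfying
\[
\mathcal{I}(u_n)\to c, \qquad (1+\|u_n\|)\,\|\mathcal{I}'(u_n)\|_{*} \to 0.
\]
The advantage is that the Cerami condition lets us absorb terms of the form $\langle \mathcal{I}'(u_n),u_n\rangle$ as error terms of order $o(1)$, even when $\|u_n\|$ grows. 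I expect the boundedness of $\{u_n\}$ to be the main obstacle of the whole proof: it should be extracted by a careful balance between $\mathcal{M}$, $\rho_{G,V}$, and the difference $\tfrac{1}{\theta}f(x,|u_n|)|u_n|^{2}-F(x,|u_n|)$ for a suitable $\theta$, exploiting precisely the additional structural assumption on the primitive $F$ advertised in Section \ref{SEC 6} (a monotonicity or convexity-type property of $F(x,t)/G(t)$, as is standard in weak-AR settings). A contradiction argument on $t_n := u_n/\|u_n\|$, distinguishing the cases $t_n\to 0$ and $t_n\not\to 0$ in $L^{G}$, should then rule out $\|u_n\|\to\infty$.

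Once boundedness is secured, up to a subsequence $u_n\rightharpoonup u$ in $W_{A,V}^{s,G}(\mathbb{R}^N,\mathbb{C})$. Using the coercivity of $V$ from $(V)$ and the compact Orlicz--Sobolev embeddings available in the magnetic fractional framework (from \cite{FSMag}), one obtains strong convergence in the relevant Orlicz spaces, enough to pass to the limit in the nonlinear term $\int f(x,|u_n|)\,u_n\,\overline{\varphi}\,dx$ for every test $\varphi$. For the Kirchhoff diffusion term, the continuity of $M$ from $(HM)_1$--$(HM)_2$ combined with the monotonicity of $(-\Delta_g^A)^{s}$ and a Brezis--Lieb / Fatou argument on $\rho_{s,G}^A$ yield $\mathcal{I}'(u)=0$ in the weak sense. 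Finally, nontriviality of $u$ follows from $\mathcal{I}(u)=c>0=\mathcal{I}(0)$ and the strong convergence, concluding the existence of the desired non trivial weak solution.
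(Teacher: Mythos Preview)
Your proposal diverges from the paper at the very point where you guess the content of $(Hf)'_3$. You assume it is a monotonicity/convexity condition on $F(x,t)/G(t)$ and accordingly plan a Cerami--sequence argument with the normalization $t_n=u_n/\|u_n\|$ and a dichotomy on whether $t_n\to 0$ in $L^G$. In the paper, however, $(Hf)'_3$ is of \emph{bounded-defect} type: one assumes
\[
\mu F(x,t)-f(x,t)t^{2}\le h(x,t),\qquad |h(x,|t|)|\le \tilde B(|t|)+\phi(x),
\]
with $\tilde B$ an $N$-function satisfying $p^{+}_{\tilde B}<p^{-}$ and $\phi\in L^{1}$, together with the superquadraticity $F(x,|t|)/|t|^{p^{+}\theta}\to\infty$. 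No monotonicity of $F/G$ is ever imposed, so the dichotomy step you sketch has nothing to lean on and your boundedness argument, as written, does not go through.

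With the actual hypothesis the paper needs neither Cerami sequences nor normalization: the \emph{classical} Palais--Smale bound of Lemma~\ref{ps condicion} survives. In the chain \eqref{eq 1} the only new contribution is $-\tfrac{1}{\mu}\int_{\mathbb{R}^N} h(x,|u_n|)\,dx$, and by the growth of $h$ and the embedding $W^{s,G}\hookrightarrow L^{\tilde B}$ this term is dominated by $C(\|u_n\|_{s,G,V}^{A})^{p^{+}_{\tilde B}}$. Since $p^{+}_{\tilde B}<p^{-}$, it is a lower-order perturbation of the right-hand side and boundedness follows exactly as before. The extra superquadratic assumption on $F$ is used only to replace \eqref{eqq 12} in Lemma~\ref{less a}, yielding $\mathcal I(tu)\to -\infty$ along a ray and hence the second geometric condition. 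The standard Mountain Pass Theorem then applies directly. In short, the proof is a two-line modification of Theorem~\ref{existence}, not a new compactness scheme; the gap in your proposal is the misidentification of $(Hf)'_3$, which led you to an unnecessarily heavy and, for this hypothesis, unsupported argument.
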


The paper is organized as follows. In Section \ref{preliminares} we give the basic definitions and properties related to N-functions and Orlicz-Sobolev spaces. We also give the main assumptions of the results of the manuscript. In Section \ref{SEC 2}, we provide the proof of Theorem \ref{existence}. The proof of the existence of ground-state solutions is given in Section \ref{SEC 3}. The discussion of the existence of multiple solutions is provided in Section \ref{SEC 4}, and in Section \ref{SEC 6} we give the proof of the existence of solutions with a weak Ambrosetti-Rabinowitz condition. We end the paper with an Appendix, where we provide some further properties of N-functions and Orlicz spaces. 
\section{Preliminaries}\label{preliminares}

\subsection{N-functions and basic properties}
In this section we introduce basic definitions and preliminary results related to Orlicz spaces. We start recalling the definition of an N-function.
	\begin{definition}\label{d2.1}
		A function $G \colon [0, \infty) \rightarrow \mathbb{R}$ is called an N-function if it admits the representation
		$$G(t)= \int _{0} ^{t} g(\tau) d\tau,$$
		where the function $g$ is right-continuous for $t \geq 0$,  positive for $t >0$, non-decreasing and satisfies the conditions
		$$g(0)=0, \quad g(\infty)=\lim_{t \to \infty}g(t)=\infty.$$
		\end{definition}By \cite[Chapter 1]{KR}, an N-function has also the following properties:
		\begin{enumerate}
			\item G is continuous, convex, increasing, even and $G(0) = 0$.
	\item G is super-linear at zero and at infinite, that is $$\lim_{x\rightarrow 0} \dfrac{G(x)}{x}=0$$and
	$$\lim_{x\rightarrow \infty} \dfrac{G(x)}{x}=\infty.$$
	
		\end{enumerate}Indeed, the above conditions serve as an equivalent definition of N-functions.
		
		An important property for N-functions is the following:
		\begin{definition}
		
			 We say that the N-function  G satisfies the $\bigtriangleup_{2}$ condition if  there exists $C > 2$ such that
			\begin{equation*}
				G(2x) \leq C G(x) \,\,\text{~~~for all~~} x \in \mathbb{R}_+.
			\end{equation*}
			\end{definition}
			
			 Examples of functions satisfying the $\bigtriangleup_{2}$ condition are:
    \begin{itemize}
        \item $G(t)= t^{p}$, $t \geq 0$, $p > 1$;
        \item $G(t)= t^{p}(|\log(t)|+1)$, $t \geq 0$, $p > 1$;
        \item $G(t)=(1+|t|)\log(1+|t|) - |t|$;
        \item $G(t)=t^{p}\chi_{(0, 1]}(t) + t^{q}\chi_{(1, \infty)}(t)$, $t \geq 0$, $p, q > 1$.
    \end{itemize}

    
    According to \cite[Theorem 4.1, Chapter 1]{KR}, a necessary and sufficient condition for an N-function to satisfy the $\bigtriangleup_{2}$ condition is that there is $p^{+} > 1$ such that
			\begin{equation}\label{eq p mas}
		\frac{tg(t)}{G(t)} \leq p^{+}, ~~~~~\forall\, t>0.
	\end{equation}In particular, \eqref{eq p mas} implies that an N-function satisfying the $\bigtriangleup_{2}$ condition does not increase more rapidly than a power function (see \eqref{G product} below). Consequently, the following N-function does not satisfies the $\bigtriangleup_{2}$ condition:
	$$G(t) = e^{|t|}-|t|-1.$$ 
	
	Associated to $G$ is  the N-function  complementary to it which is defined as follows:
		\begin{equation}\label{Gcomp}
			\widetilde{G} (t) := \sup \left\lbrace tw-G(w) \colon w>0 \right\rbrace .
		\end{equation}Moreover, the following representation holds for $\widetilde{G}$:
		$$\widetilde{G}(t)=\int_0^{t}g^{-1}(s)\,ds,$$where $g^{-1}$ is the right-continuous inverse of $g$. We recall that the role played by $\widetilde{G}$ is the same as the conjugate exponent functions when $G(t)=t^{p}$, $p >1$.
	
The definition of the complementary function assures that the following Young-type inequality holds
	\begin{equation}\label{2.5}
		at \leq G(t)+\widetilde{G} (a) \text{  for every } a,t \geq 0.
	\end{equation}
	
	We also quote the following useful lemma.
    \begin{lemma}\cite[Lemma 2.9]{BS}\label{G g}
        Let $G$ be an N-function. If $G$ satisfies \eqref{eq p mas} then 
        \[
            \tilde{G}(g(t)) \leq (p^+-1)G(t),
        \]
        where $g=G'$ and $\tilde{G}$ is the complementary function of $G.$
\end{lemma}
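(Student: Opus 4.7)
The plan is to combine the Young equality for Legendre transforms of convex N-functions with the hypothesis \eqref{eq p mas} in the most direct way possible. The Legendre transform $\widetilde{G}(s) = \sup_{w>0}\{sw - G(w)\}$ is, on the domain where $G'$ is defined, attained at points where $g(w) = s$. Since $g$ is non-decreasing, right-continuous, with $g(0)=0$ and $g(\infty)=\infty$, for any fixed $t>0$ the choice $w = t$ realizes the supremum at $s = g(t)$, giving the classical Young equality
\begin{equation*}
\widetilde{G}(g(t)) = t\,g(t) - G(t).
\end{equation*}

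With that identity in hand, the hypothesis \eqref{eq p mas}, namely $tg(t)/G(t) \le p^{+}$ for all $t>0$, immediately gives $tg(t) \le p^{+} G(t)$, and hence
\begin{equation*}
\widetilde{G}(g(t)) = tg(t) - G(t) \le p^{+} G(t) - G(t) = (p^{+} - 1)\,G(t),
\end{equation*}
which is the claim.

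So the only step that requires any care is justifying the Young equality $\widetilde{G}(g(t)) = tg(t) - G(t)$, which I would establish by a short two-sided argument: on the one hand, plugging $w = t$ into the definition of $\widetilde{G}(g(t))$ yields $\widetilde{G}(g(t)) \ge t g(t) - G(t)$; on the other hand, for any $w>0$ convexity of $G$ gives $G(w) \ge G(t) + g(t)(w - t)$, i.e., $g(t)w - G(w) \le g(t)t - G(t)$, and taking the supremum over $w$ delivers the reverse inequality. This is the only piece of machinery used beyond the hypothesis itself, and no further properties of N-functions or of the $\Delta_2$ condition are needed, so I do not anticipate any real obstacle. The argument is essentially a one-line consequence of Legendre duality once the equality case of Young's inequality is recorded.
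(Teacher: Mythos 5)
Your argument is correct: the Young equality $\widetilde{G}(g(t)) = t\,g(t) - G(t)$ (valid since $g(t)$ is a subgradient of the convex function $G$ at $t$) combined with \eqref{eq p mas} gives the claim immediately. The paper itself only cites \cite[Lemma 2.9]{BS} for this fact, and the proof there is essentially this same equality-case-of-Young argument, so your proposal matches the standard approach.
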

	
	By \cite[Theorem 4.3,   Chapter 1]{KR}, a necessary and sufficient condition for the N-function $\widetilde{G} $ complementary to $G$ to satisfy the $\bigtriangleup_{2}$ condition is that there is $p^{-} > 1$ such that
			\begin{equation}
	p^{-} \leq 	\frac{tg(t)}{G(t)}, ~~~~~\forall\, t>0.
	\end{equation}

	From now on, we will assume that the N-function  $G(t)= \int _{0} ^{t} g(\tau) d\tau$  satisfies the following growth behaviour:
	\begin{equation}\label{G1}
		1 < p^{-} \leq \frac{tg(t)}{G(t)} \leq p^{+} < \infty, ~~~~~\forall t>0.
	\end{equation}Hence, $G$ and $\widetilde{G} $ both satisfy the $\bigtriangleup_{2}$ condition. Extending $g$ to the whole of $\mathbb{R}$ by setting $g(t)=-g(-t)$ for $t \leq 0$, we get that \eqref{G1} holds  for all $t \in \mathbb{R}, \, t \neq 0.$ Another useful condition which is assumed in the manuscript  is the following:
	\begin{equation}\label{cG0}
p^{-}-1 \leq \dfrac{tg'(t)}{g(t)}\leq p^{+}-1, ~~~~~\forall t>0.
\end{equation}Again, \eqref{cG0} holds for any $t \in \mathbb{R},\, t \neq 0.$ Observe that \eqref{cG0} implies \eqref{G1}. An example of an N-function satisfying \eqref{cG0} is
$$G(t)=t^p(|\log(t)|+1), \quad p>\tfrac{3+\sqrt{5}}{2}.$$

From \eqref{G1} and  \eqref{cG0}, there follows the next comparison of N-functions with power functions:
   \begin{equation}\label{gg product}
         {\min\left\lbrace a^{p^{-}-1}, a^{p^{+}-1} \right\rbrace g(b) \leq g(ab) \leq  \max\left\lbrace a^{p^{-}-1}, a^{p^{+}-1} \right\rbrace g(b),}
    \end{equation}and
    \begin{equation}\label{G product}
        \min\left\lbrace a^{p^{-}}, a^{p^{+}} \right\rbrace G(b) \leq G(ab) \leq  \max\left\lbrace a^{p^{-}}, a^{p^{+}} \right\rbrace G(b),
    \end{equation}for all $a, b$, where we use the convention $t^{\alpha}= |t|^{\alpha-1}t$. 
    
    \medskip 

Given two $N$-functions $A$ and $B$, we say that $B$ is essentially larger than $A$, denoted by
$A \ll B,$ if for any $c > 0$,
		\begin{equation*}
			\lim_{t \rightarrow \infty} \dfrac{A(ct)}{B(t)}=0.
		\end{equation*}

\subsection{Main assumptions}\label{assumptions sec} In this section, we state the main assumptions for the data of problem \eqref{main eq}.

We start with additional assumptions for the N-function $G$ (recall that it already satisfied  \eqref{cG0}):

$(HG)_1$ Assume $1< p^{-}\leq  p^{+}< \min\left\lbrace (p^{-})^{*}, N \right\rbrace$, where $(p^{-})^{*}= Np^{-}/(N-p^{-})$.

$(HG)_2$ There holds
$$\int_0^{1}\dfrac{G^{-1}(t)}{t^{(N+s)/s}}\,dt < \infty\, \text{ and }\,\int_1^{\infty}\dfrac{G^{-1}(t)}{t^{(N+s)/s}}\,dt=\infty.$$

$(HG)_3$ The function $t \to G(\sqrt{t})$ is convex.

\

In connection with $(HG)_2$, we define the conjugate Orlicz-Sobolev function of $G$ as:
$$(G^{*})^{-1}(t)= \int_0^{t}\dfrac{G^{-1}(w)}{w^{(N+s)/s}}\,dw.$$
The conjugate function will play a role in the embedding results for Orlicz-Sobolev spaces. 

\begin{remark}Observe that when $G(t)=t^{p}$, $p > 1$, $(HG)_2$ holds when $sp < N$. 
\end{remark}

\

Regarding the function $f=f(x, t)$, we will assume:

\

$(Hf)_1$ The function $f: \mathbb{R}^{N}\times \mathbb{R} \to \mathbb{R}$ is a Carath\'eodory function such that 
$$f(x, t)=o(g'(t)) \quad \text{ as }t \to 0.$$

$(Hf)_2$ There exist a constant $c_0 >0$ and an N-function $B$ verifying $B \ll G^{*}$ and $p^{-}_B > p^{+}$, such that
$$|f(x, t)| \leq c_0(1+ b'(t)), \text{ for all }t \geq 0, \,\,\text{where }\,b=B'.$$

$(Hf)_3$ (Ambrosetti-Rabinowitz condition) Let
$$F(x, t):=\int_0^{t}sf(x, s)\,ds.$$Then there exists $\mu >1$ such that
$$0 < \mu F(x, t) \leq f(x, t)t^{2}, \,\text{ for all }t \neq 0$$and
$$\frac{1}{\theta}-\frac{p^{+}}{\mu} >0,$$where $\theta$ is as in $(HM)_1$ below.

\begin{remark}\label{remark} \textit{i)} The assumption $(Hf)_1$ implies that for any $\varepsilon > 0$, there is $\delta >0$ such that
$$|f(x, t)| \leq \varepsilon g'(t)  \quad \text{ if }|t|\leq \delta.$$Now if $t > \delta $, we have
\begin{equation}
\begin{split}
|f(x, t)| \leq c_0(1+ b'(t)) \leq c_0b'(t)\left( 1 + \dfrac{1}{b'(t)}\right) \leq c_0b'(t)\left( 1 + \dfrac{1}{(p_B^{-}-1)\delta^{p_B^{+}-2}}\right). 
\end{split}
\end{equation}As a result,
$$|f(x, t)| \leq \varepsilon g'(t) + C_{\varepsilon}b'(t), \text{ for all }t.$$

\textit{ii)} Observe that the condition $p^{-}_B > p^{+}$ in $(Hf)_2$ implies for $\lambda> 0$,
$$\dfrac{B(t)}{G(\lambda t)} \leq \dfrac{t^{p_B^{-}}}{\lambda^{p^{+}}t^{p^{+}}},$$for all $t$ small enough. Hence,
$$\lim_{t \to 0}\dfrac{B(t)}{G(\lambda t)}=0 \quad \text{ for any }\lambda >0.$$This observation will be needed to get compactness of  embeddings in fractional Orlicz-Sobolev spaces. 
\end{remark}

\

For $M=M(t)$, we will suppose the following:

\

$(HM)_1$ The function $M: \mathbb{R}_+ \to \mathbb{R}_+$ is continuous and for any $\alpha > 0$, there exists $\delta = \delta(\alpha)$ such that
$$M(t) \geq \delta, \,\, \text{for all }t \geq \alpha, \quad t \geq 0.$$

$(HM)_2$ Let 
$$\mathcal{M}(t):= \int_0^{t}M(s)\,ds.$$There exists $\theta \in (1, p_B^{-}/p^{+})$ such that
$$M(t)t \leq \theta \mathcal{M}(t),\quad \text{ for all } t \geq 0.$$

$(HM)_3$ There is $c_0 \in (0, 1)$ such that for all $t \in [0, 1]$ there holds
$$M(t)\geq c_0t^{\theta-1}.$$

\begin{remark}Assumption $(HM)_3$ will be needed to prove the existence of ground-state solutions.  
\end{remark}

The assumptions on the potential $V=V(x)$ are the following:

\

(V) The function $V: \mathbb{R}^{N}\to \mathbb{R}$ is non-negative, continuous in $\mathbb{R}^{N}$  and satisfies
\begin{itemize}
\item $V(x) \geq V_0 >0$ for all $x \in \mathbb{R}^{N}$;
\item For any $M >0$, the set $\left\lbrace x: V(x) < M \right\rbrace$ has finite Lebesgue measure.
\end{itemize}

\subsection{Functional framework}

\subsection{Lebesgue and Orlicz-Sobolev spaces}

We recall the notation
$$d\mu=\dfrac{dx\,dy}{|x-y|^{N}}$$and we introduce
$$D_su(x, y):=\dfrac{u(x)-u(y)}{|x-y|^{s}}.$$
\begin{definition}Let $G$ be an N-function, $s \in (0, 1)$ and $V: \mathbb{R}^{N} \to \mathbb{R}$ be a measurable and bounded function. We define   Lebesgue-Orlicz spaces as follows:
$$L^{G}(\mathbb{R}^{N}, \mathbb{C}):=\left\lbrace u:\mathbb{R}^{N}\to \mathbb{C},\, \text{u is measurable and }\rho_{G}(u) < \infty \right\rbrace$$and
$$L_V^{G}(\mathbb{R}^{N}, \mathbb{C}):=\left\lbrace u:\mathbb{R}^{N}\to \mathbb{C},\, \text{u is measurable and }\rho_{G, V}(u) < \infty \right\rbrace,$$where
$$\rho_{G}(u):=\int_{\mathbb{R}^{N}}G(|u(x)|)\,dx, \quad \text{and }\quad   \rho_{G, V}(u):=\int_{\mathbb{R}^{N}}G(|u(x)|)V(x)\,dx.$$The spaces $L^{G}(\mathbb{R}^{N}, \mathbb{C})$ and $L_V^{G}(\mathbb{R}^{N}, \mathbb{C})$ are equipped with the Luxemburg norms:
$$\|u\|_{G}:=\inf \left\lbrace \lambda >0: \rho_{G}\left( \dfrac{u}{\lambda}\right) \leq 1\right\rbrace$$and $$\|u\|_{G, V}:=\inf \left\lbrace \lambda >0: \rho_{G, V}\left( \dfrac{u}{\lambda}\right) \leq 1\right\rbrace,$$respectively.  The fractional Orlicz-Sobolev spaces are given as$$W^{s, G}(\mathbb{R}^{N}, \mathbb{C})=:\left\lbrace u \in L^{G}(\mathbb{R}^{N}, \mathbb{C}), \rho_{s, G}(u)< \infty \right\rbrace,$$and$$W^{s, G}_V(\mathbb{R}^{N}, \mathbb{C})=:\left\lbrace u \in L_V^{G}(\mathbb{R}^{N}, \mathbb{C}), \rho_{s, G}(u)< \infty \right\rbrace,$$where$$\rho_{s, G}(u)=\int_{\mathbb{R}^{2N}}G(|D_su(x, y)|)\,d\mu.$$The norms in $W^{s, G}(\mathbb{R}^{N}, \mathbb{C})$ and in $W_V^{s, G}(\mathbb{R}^{N}, \mathbb{C})$ are
$$\|u\|_{s, G}:=\|u\|_{G}+[u]_{s, G} \quad \text{and }\quad \|u\|_{s, G}:=\|u\|_{G, V}+[u]_{s, G},$$where$$[u]_{s, G}:=\inf\left\lbrace \lambda >0: \rho_{s, G}\left(\dfrac{u}{\lambda}\right) \leq 1\right\rbrace.$$ \end{definition}For simplicity, when the above spaces are restricted to real-valued functions, we will omit $\mathbb{C}$ in their notation.

The continuity of the embedding $W^{s, G}(\mathbb{R}^{N})$ in $L^{B}(\mathbb{R}^{N})$ is provided in the next theorem from \cite[Theorem 2]{BO}

\begin{theorem}\label{continuity}
Let $G$ be an N-function satisfying \eqref{cG0}. Then, for any N-function $B$ satisfying the $\Delta_2$ condition such that $B \ll G^{*}$, we get that the inclusion
$$W^{s, G}(\mathbb{R}^{N}) \hookrightarrow L^{B}(\mathbb{R}^{N})$$is continuous. Moreover, if $G$ satisfies $(HG)_1$ and $(HG)_2$, then the inclusion
$$W^{s, G}(\mathbb{R}^{N}) \hookrightarrow L^{G^{*}}(\mathbb{R}^{N})$$is continuous.
\end{theorem}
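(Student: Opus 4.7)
The plan is to prove the two embeddings in sequence: first the critical inclusion $W^{s,G}(\mathbb{R}^{N}) \hookrightarrow L^{G^{*}}(\mathbb{R}^{N})$ under $(HG)_1$--$(HG)_2$, and then deduce the subcritical one into $L^{B}$ for $B \ll G^{*}$ via an Orlicz interpolation argument. Throughout I would reduce to $u \in C_c^{\infty}(\mathbb{R}^{N})$ by density (both $G$ and $\widetilde{G}$ are $\Delta_2$ by \eqref{G1}, so $C_c^{\infty}$ is dense in $W^{s,G}$), and work at the level of the modular $\rho_{s,G}$, upgrading to a norm inequality at the end through the defining property of the Luxemburg norm.

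For the critical step, the central object is a pointwise control of $u$ by a fractional $s$-maximal function of $D_s u$ of the form
$$|u(x)| \leq C\int_{0}^{\infty} G^{-1}\!\left(\frac{1}{r^{N}}\int_{B_r(x)\times B_r(x)} G(|D_s u|)\,d\mu\right)\frac{dr}{r^{1-s}},$$
which is the Orlicz analogue of the Riesz potential representation available in the power case. Combining this bound with a layer-cake/rearrangement argument in the spirit of Cianchi's Orlicz-Sobolev inequalities, and using the very definition $(G^{*})^{-1}(t)=\int_{0}^{t} G^{-1}(w)\,w^{-(N+s)/s}\,dw$ (well-posed by $(HG)_2$), I would derive a modular estimate of the type $\int_{\mathbb{R}^{N}} G^{*}(|u|/\lambda)\,dx \leq C\,\rho_{s,G}(u)$ with $\lambda$ proportional to $[u]_{s,G}$. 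By the definition of the Luxemburg norm this is exactly continuity of $W^{s,G}(\mathbb{R}^{N}) \hookrightarrow L^{G^{*}}(\mathbb{R}^{N})$.

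For the subcritical step, any $u \in W^{s,G}(\mathbb{R}^{N})$ now lies in $L^{G}(\mathbb{R}^{N})\cap L^{G^{*}}(\mathbb{R}^{N})$. Given $\varepsilon > 0$, the assumption $B \ll G^{*}$ provides $t_\varepsilon > 0$ with $B(t) \leq \varepsilon\, G^{*}(t)$ for $t > t_\varepsilon$, while on $\{|u|\leq t_\varepsilon\}$ the ratio $B(t)/G(t)$ is bounded (this is where the implicit growth comparison near $0$, consistent with the paper's working condition $p_B^{-} > p^{+}$, enters). Splitting $\int_{\mathbb{R}^{N}} B(|u|)\,dx$ across these two level sets, invoking the $\Delta_2$ condition on $B$ to homogenize scaling, and using the two modular bounds already available on $u$ yields the desired continuity.

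The hard part is the pointwise representation underlying the critical embedding. In the power case $G(t)=|t|^{p}$ the Sobolev inequality follows from the Riesz-potential / rearrangement machinery with exponent $p^{*}=Np/(N-sp)$ appearing algebraically; in the Orlicz framework no such scaling is available, and one must work directly with $(G^{*})^{-1}$ as an integral. The delicate points are controlling the remainder terms that arise when passing between integral averages and Orlicz modulars, and ensuring that Jensen's inequality can be applied inside $G$ at the correct scale — both of which rely crucially on the two-sided $\Delta_2$-type bounds in \eqref{cG0}.
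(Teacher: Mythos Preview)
The paper does not prove this theorem at all: it is simply quoted from \cite[Theorem~2]{BO} (Bahrouni--Ounaies), with no argument given. So there is no ``paper's own proof'' to compare your proposal against; the statement functions here purely as a black-box embedding result.

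That said, a few remarks on your sketch. The outline for the critical embedding is at the right level of generality --- the pointwise Riesz-type representation combined with rearrangement arguments is indeed how optimal fractional Orlicz--Sobolev inequalities are obtained (cf.\ \cite{ACP}) --- but what you have written is a plan rather than a proof: the displayed pointwise bound and the passage from it to a modular inequality for $G^{*}$ are each substantial pieces of work that you have not carried out. For the subcritical step, you correctly notice a genuine issue: with only the hypothesis $B\ll G^{*}$ (a condition at infinity), your splitting argument cannot control $\int_{\{|u|\le t_\varepsilon\}} B(|u|)\,dx$ on all of $\mathbb{R}^{N}$ without some comparison of $B$ and $G$ near zero. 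You patch this by importing the paper's standing assumption $p_B^{-}>p^{+}$ from $(Hf)_2$, but that condition is \emph{not} part of the theorem's hypotheses as stated. In the source \cite{BO} the embedding is proved under its own set of structural assumptions, and if you want a self-contained argument you should either add the near-zero comparison as an explicit hypothesis or check precisely what \cite{BO} assumes.
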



For further reference, we provide the next  Holder's inequality for Orlicz functions with respect to the measure $\mu$.
\begin{lemma}\label{Holder mu} Assume that $U=U(x, y)\in  L_\mu^{G}(\mathbb{R}^{2N})$  and $V=V(x, y)\in  L_\mu^{\tilde{G} }(\mathbb{R}^{2N})$. Then,
$$\int_{\mathbb{R}^{2N}}U(x, y)V(x, y)\,d\mu \leq 2[U]_{s, G}[V]_{s, \tilde{G}}.$$
\end{lemma}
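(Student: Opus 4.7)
The plan is to reduce the inequality to the pointwise Young-type inequality \eqref{2.5} and then integrate against $d\mu$. Set $\lambda_1 := [U]_{s, G}$ and $\lambda_2 := [V]_{s, \tilde{G}}$. If either vanishes, the corresponding function must be zero $\mu$-a.e., and the claim is trivial; so I assume $\lambda_1, \lambda_2 > 0$. By the definition of the Luxemburg norm, for any $\varepsilon > 0$ one has
$$\int_{\mathbb{R}^{2N}} G\!\left(\frac{|U(x,y)|}{\lambda_1+\varepsilon}\right) d\mu \leq 1, \qquad \int_{\mathbb{R}^{2N}} \tilde{G}\!\left(\frac{|V(x,y)|}{\lambda_2+\varepsilon}\right) d\mu \leq 1.$$

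Next I would apply the Young-type inequality \eqref{2.5} pointwise with $a = |U(x,y)|/(\lambda_1+\varepsilon)$ and $t = |V(x,y)|/(\lambda_2+\varepsilon)$, which yields
$$\frac{|U(x,y)|\,|V(x,y)|}{(\lambda_1+\varepsilon)(\lambda_2+\varepsilon)} \leq G\!\left(\frac{|U(x,y)|}{\lambda_1+\varepsilon}\right) + \tilde{G}\!\left(\frac{|V(x,y)|}{\lambda_2+\varepsilon}\right).$$
Integrating both sides with respect to $d\mu$ and invoking the two modular bounds above, I obtain
$$\int_{\mathbb{R}^{2N}} |U(x,y)|\,|V(x,y)|\, d\mu \leq 2(\lambda_1+\varepsilon)(\lambda_2+\varepsilon).$$
Letting $\varepsilon \downarrow 0$ and using that $|\int UV\, d\mu| \leq \int |U||V|\, d\mu$ gives the desired bound by $2[U]_{s, G}[V]_{s, \tilde{G}}$.

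There is essentially no obstacle: the argument is the standard proof of H\"older's inequality in Orlicz spaces, transported verbatim to the weighted measure $\mu$. The only mildly technical point is that the infimum defining the Luxemburg norm need not be attained in general, which is why I introduced the $\varepsilon$-perturbation; however, since under \eqref{G1} both $G$ and $\tilde{G}$ satisfy the $\Delta_2$ condition, the modular is continuous on $L^G_\mu(\mathbb{R}^{2N})$ and the infimum is in fact attained, so one may simply take $\varepsilon = 0$.
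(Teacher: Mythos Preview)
Your proof is correct and follows essentially the same route as the paper's own argument: apply Young's inequality \eqref{2.5} pointwise to the normalized functions and integrate against $d\mu$. The only difference is cosmetic---you add an $\varepsilon$-perturbation to guarantee the modular bound, whereas the paper uses directly that $\int G(|U|/[U]_{s,G})\,d\mu \le 1$ (which, as you correctly observe, is justified under the $\Delta_2$ condition since the infimum in the Luxemburg norm is then attained).
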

\begin{proof}
By definition,
$$ [U]_{s, G} = \inf\left\lbrace \lambda > 0: \int_{\mathbb{R}^{2N}}G\left(\dfrac{|U(x, y)|}{\lambda} \right)\,d\mu \leq 1 \right\rbrace.$$In particular,
$$\int_{\mathbb{R}^{2N}}G\left(\dfrac{|U(x, y)|}{[U]_{s, G} } \right)\,d\mu \leq 1.$$A similar inequality holds for $V$. Thus, applying Young's inequality \eqref{2.5},
$$\dfrac{U(x, y)}{[U]_{s, G} }.\dfrac{V(x, y)}{[V]_{s, G} } \leq G\left(\dfrac{U(x, y)}{[U]_{s, G} } \right) + \tilde{G}\left(\dfrac{V(x, y)}{[V]_{s, G} } \right),$$and integrating, we obtain
\begin{equation}
\begin{split}
\int_{\mathbb{R}^{2N}}U(x, y)V(x, y)\,d\mu &\leq [U]_{s, G} .[V]_{s, G} \left(\int_{\mathbb{R}^{2N}}G\left( \dfrac{U(x, y)}{[U]_{s, G} }\right) \,d\mu+\int_{\mathbb{R}^{2N}}G\left( \dfrac{V(x, y)}{[V]_{s, G} }\right)\,d\mu \right)\\ & \leq 2[U]_{s, G} .[V]_{s, G}.
\end{split}
\end{equation}
\end{proof}

To close the section, we quote the following relation between modulars and norms.
\begin{lemma}\label{comp norm modular}
        Let $G$ be an N-function satisfying \eqref{G1},  and let 
        $\xi^\pm\colon[0,\infty)$ $\to\mathbb{R}$ be defined as
        \[
            \xi^{-}(t):= 
            \min \big \{  t^{p^{-}}, t^{p^{+}} \big  \} ,
            \quad \text{ and }  \quad
            \xi^{+}(t):=\max \big \{  t^{p^{-}}, t^{p^{+}} \big \} . 
         \] 
         Then
        \begin{itemize}
            \item[(1)]
             $\xi^{-}(\|u\|_{G}) \leq \rho_{G}(u) 
                    \leq  \xi^{+}(\|u\|_{G})$;

   \item[(2)]
    $\xi^{-}([u]_{s, G}) 
                \leq \rho_{s, G}(u) \leq  
                \xi^{+}([u]_{s, G,}).$
        \end{itemize}
    \end{lemma}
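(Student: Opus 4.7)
The plan is to derive both inequalities from the scaling estimate \eqref{G product} together with the normalization property of the Luxemburg norm under the $\Delta_2$ condition. Since $G$ satisfies \eqref{G1}, both $G$ and $\widetilde{G}$ are in $\Delta_2$, and consequently the modular $\rho_G$ is continuous on $L^G(\mathbb{R}^N,\mathbb{C})$; standard arguments (see \cite{KR}) then give that, for $u \neq 0$, the infimum defining $\|u\|_G$ is attained, i.e.\
$$\rho_G\!\left(\dfrac{u}{\|u\|_G}\right) = 1.$$
An entirely analogous continuity/attainment statement holds for $\rho_{s,G}$ on $W^{s,G}(\mathbb{R}^N,\mathbb{C})$, so that $\rho_{s,G}(u/[u]_{s,G}) = 1$ whenever $[u]_{s,G} > 0$.

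For part (1), I would apply \eqref{G product} pointwise with the choices $a := \|u\|_G$ and $b := |u(x)|/\|u\|_G$, which yields
$$\xi^{-}(\|u\|_G)\, G\!\left(\dfrac{|u(x)|}{\|u\|_G}\right) \;\leq\; G(|u(x)|) \;\leq\; \xi^{+}(\|u\|_G)\, G\!\left(\dfrac{|u(x)|}{\|u\|_G}\right).$$
Integrating over $\mathbb{R}^N$ and using the normalization $\rho_G(u/\|u\|_G)=1$ immediately gives
$$\xi^{-}(\|u\|_G) \leq \rho_G(u) \leq \xi^{+}(\|u\|_G).$$
The case $u \equiv 0$ is trivial.

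For part (2), I would repeat the same scheme with the scalar variable $b$ replaced by $|D_s u(x,y)|/[u]_{s,G}$ and $a := [u]_{s,G}$, applying \eqref{G product} pointwise in $(x,y)\in\mathbb{R}^{2N}$, and then integrating against the measure $d\mu = dx\,dy/|x-y|^N$ instead of Lebesgue measure. The attainment $\rho_{s,G}(u/[u]_{s,G})=1$ closes the estimate in precisely the same way.

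I do not anticipate a genuine obstacle here; the only subtle point is invoking the attainment $\rho_G(u/\|u\|_G)=1$ (and its Gagliardo analogue), which is exactly where the $\Delta_2$ condition built into \eqref{G1} is essential. Everything else is a pointwise application of the power-type comparison \eqref{G product} followed by integration.
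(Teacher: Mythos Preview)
The paper does not actually prove this lemma; it is merely quoted as a standard relation between modulars and Luxemburg norms, with no argument given. Your proof is the standard one and is correct: the $\Delta_2$ condition implied by \eqref{G1} guarantees the attainment $\rho_G(u/\|u\|_G)=1$ (and its Gagliardo analogue), after which the pointwise scaling estimate \eqref{G product} integrates to the desired bounds. One minor remark: although the paper attributes \eqref{G product} jointly to \eqref{G1} and \eqref{cG0}, the inequality \eqref{G product} in fact follows from \eqref{G1} alone (integrate $p^{-}/t \le g(t)/G(t) \le p^{+}/t$), so your use of it under the sole hypothesis \eqref{G1} is legitimate.
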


\subsection{Magnetic spaces} For a given vector field $A: \mathbb{R}^{N} \to \mathbb{R}^{N}$, we recall
$$D_s^{A}u(x, y)=\dfrac{u(x)-e^{i(x-y)\cdot A\left(\frac{x+y}{2}\right)}u(y)}{|x-y|^{s}}.$$
\begin{definition}Let $G$ be an N-function, $s \in (0, 1)$ and $A: \mathbb{R}^{N} \to \mathbb{R}^{N}$ be a smooth vector field. We define  magnetic  fractional Orlicz-Sobolev spaces as follows:
$$W_{A}^{s, G}(\mathbb{R}^{N}, \mathbb{C}):=\left\lbrace u \in L^{G}(\mathbb{R}^{N}, \mathbb{C}): \rho_{s, G}^{A}(u)< \infty\right\rbrace$$and the weighted space
$$W_{A, V}^{s, G}(\mathbb{R}^{N}, \mathbb{C}):=\left\lbrace u \in L_V^{G}(\mathbb{R}^{N}, \mathbb{C}): \rho_{s, G}^{A}(u)< \infty\right\rbrace,$$where
$$\rho_{s, G}^{A}(u) := \int_{\mathbb{R}^{2N}}G\left( |D_s^{A}u(x, y)|\right)\,d\mu.$$The spaces $W_{A}^{s, G}(\mathbb{R}^{N}, \mathbb{C})$ and $W_{A, V}^{s, G}(\mathbb{R}^{N}, \mathbb{C})$ are Banach spaces when equipped with the Luxemburg norms:
$$\|u\|^{A}_{s, G}:= \|u\|_{G}+[u]_{s, G}^{A}$$
and
$$\|u\|^{A}_{s, G, V}:= \|u\|_{G, V}+[u]_{s, G}^{A}$$where the seminorm $[u]_{s, G}^{A}$ is given by
$$[u]_{s, G}^{A}:=\inf\left\lbrace \lambda >0: \rho_{s, G}^{A}\left(\dfrac{u}{\lambda}\right) \leq 1 \right\rbrace.$$

\end{definition}

The relation between magnetic fractional spaces and $W^{s, G}_V(\mathbb{R}^{N})$ is stated in the next theorem. The proof of the theorem relies on the following diamagnetic inequality:
\begin{equation}\label{diamagnetic ineq}
||u(x)|-|u(y)|| \leq \bigg| u(x)-e^{i(x-y)\cdot A\left( \frac{x+y}{2}\right)}u(y)\bigg|,
\end{equation}which is valid for any finite and measurable magnetic potential field $A$ and any finite and measurable complex function $u$ (for the proof see \cite[Lemma 3.1 and Remark 3.2]{DS}). The meaning of \eqref{diamagnetic ineq} is that removing the magnetic field ($A=0$) allows to decrease the kinetic energy by replacing $u$ with $|u|$ (keeping  $|u|^{2}$ unaltered).

\begin{theorem}\label{embedding continuous}
The embedding $W^{s, G}_{V, A}(\mathbb{R}^{N}, \mathbb{C}) \hookrightarrow W^{s, G}_{V}(\mathbb{R}^{N})$ is continuous in the sense that if $u \in W^{s, G}_{V, A}(\mathbb{R}^{N}, \mathbb{C}) $ then $|u| \in W^{s, G}_{V}(\mathbb{R}^{N})$ and
$$\||u|\|_{s, G, V} \leq \|u\|_{s, G, V}^{A}.$$ 
\end{theorem}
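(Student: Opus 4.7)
The plan is to rely entirely on the diamagnetic inequality \eqref{diamagnetic ineq} plus the monotonicity of the $N$-function $G$, translating the pointwise estimate first to modulars and then to Luxemburg norms.

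First, observe that $\||u|\|_{G,V} = \|u\|_{G,V}$ because the $L^G_V$ modular depends only on $|u(x)|$: for each $\lambda > 0$,
$$\rho_{G,V}\Bigl(\frac{|u|}{\lambda}\Bigr) = \int_{\mathbb{R}^N} G\Bigl(\frac{|u(x)|}{\lambda}\Bigr) V(x)\,dx = \rho_{G,V}\Bigl(\frac{u}{\lambda}\Bigr),$$
so the two infima defining the Luxemburg norms coincide. In particular $|u| \in L^G_V(\mathbb{R}^N)$ whenever $u \in L^G_V(\mathbb{R}^N,\mathbb{C})$.

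Next I would handle the seminorm $[|u|]_{s,G}$. Applying \eqref{diamagnetic ineq} pointwise and dividing by $|x-y|^s$ yields
$$|D_s|u|(x,y)| \leq |D_s^A u(x,y)| \quad \text{for a.e.\ } (x,y) \in \mathbb{R}^{2N}.$$
Since $G$ is even, increasing on $[0,\infty)$ and $G(0)=0$, applying $G$ to both sides and integrating against the measure $d\mu$ gives, for any $\lambda > 0$,
$$\rho_{s,G}\Bigl(\frac{|u|}{\lambda}\Bigr) = \int_{\mathbb{R}^{2N}} G\Bigl(\frac{|D_s|u|(x,y)|}{\lambda}\Bigr)\,d\mu \leq \int_{\mathbb{R}^{2N}} G\Bigl(\frac{|D_s^A u(x,y)|}{\lambda}\Bigr)\,d\mu = \rho_{s,G}^A\Bigl(\frac{u}{\lambda}\Bigr).$$
Hence every admissible $\lambda$ in the infimum defining $[u]_{s,G}^A$ is also admissible in the infimum defining $[|u|]_{s,G}$, yielding $[|u|]_{s,G} \leq [u]_{s,G}^A$. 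In particular $|u| \in W^{s,G}_V(\mathbb{R}^N)$.

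Adding the two estimates gives
$$\||u|\|_{s,G,V} = \||u|\|_{G,V} + [|u|]_{s,G} \leq \|u\|_{G,V} + [u]_{s,G}^A = \|u\|_{s,G,V}^A,$$
which is the claimed inequality. I do not foresee a genuine obstacle: the diamagnetic inequality is quoted and does the real work, while the $N$-function machinery (monotonicity of $G$ and evenness) is used only to pass from a pointwise bound to a modular bound and thence to Luxemburg norms. The only point deserving care is that $G$ be applied to a nonnegative argument, which is automatic since both sides of the diamagnetic inequality are absolute values.
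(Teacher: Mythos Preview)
Your proof is correct and follows exactly the approach indicated in the paper: the paper states that the proof relies on the diamagnetic inequality \eqref{diamagnetic ineq} but does not spell out the details, and your argument fills these in precisely---equality of the $L^G_V$ norms since they depend only on $|u|$, then the pointwise diamagnetic bound $|D_s|u|(x,y)|\le |D_s^A u(x,y)|$, monotonicity of $G$ to compare modulars, and finally the passage to Luxemburg seminorms.
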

We also point out that since $V_0=\inf_{\mathbb{R}^{N}} V >0$, it follows that
$$\||u|\|_{s, G}\leq C\||u|\|_{s, G, V},$$for some $C>0$. Indeed, assume without loss of generality that $V_0<1$. Let $\lambda >0$ such that$$\int_{\mathbb{R}^{N}}B\left(\dfrac{|u|}{\lambda} \right)V(x)\,dx \leq 1.$$Then,
\begin{equation*}
\begin{split}\int_{\mathbb{R}^{N}}B\left(\dfrac{|u|}{V_0^{-1/p_B^{-}}\lambda} \right)\,dx  &=V_0\left( \dfrac{1}{V_0}\right)^{p_B^{-}/p_B^{-}}\int_{\mathbb{R}^{N}}B\left(\dfrac{|u|}{V_0^{-1/p_B^{-}}\lambda} \right)\,dx \\ & \leq \int_{\mathbb{R}^{N}}B\left(\dfrac{|u|}{\lambda} \right)V(x)\,dx \leq 1 \qquad (\text{by }\eqref{G product}).
\end{split}
\end{equation*}Thus,
$$\||u|\|_{s, G}\leq V_0^{-1/p_B^{-}}\||u|\|_{s, G, V}.$$Hence, the embedding  $W^{s, G}_{V, A}(\mathbb{R}^{N}, \mathbb{C}) \hookrightarrow W^{s, G}(\mathbb{R}^{N})$ is also continuous.


Finally, we mention the following optimal fractional Sobolev inequality from \cite{ACP}. Assume that the  N-function $G$ satisfies:
$$\int^{\infty}\left(\dfrac{t}{G(t)} \right)^{\frac{s}{N-s}}\,dt =\infty$$and
$$\int_0\left(\dfrac{t}{G(t)} \right)^{\frac{s}{N-s}}\,dt <\infty.$$The optimal N-function for the compact embedding is defined as
$$G_{N/s}(t) =G(H^{-1}(t)),\quad t \geq 0$$
where
$$H(t)=\left(\int_0^{t}\left(\frac{\tau}{G(\tau)} \right)^{\frac{s}{N-s}}\,d\tau \right)^{\frac{N-s}{N}}, \quad t \geq 0.$$

The optimal embedding is the next result \cite[Theorem 6.1]{ACP}.
\begin{theorem}\label{compact optimal}
The following embedding is continuous:
$$W^{s, G}(\mathbb{R}^{N}) \hookrightarrow L^{G_{N/s}}(\mathbb{R}^{N}).$$Moreover, there is a constant $C=C(s, N) >0$ such that
$$\|u\|_{G_{N/s}}\leq C[u]_{s, G}.$$
\end{theorem}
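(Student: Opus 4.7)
The plan is to obtain this sharp embedding through symmetrization together with a one-dimensional Orlicz--Hardy inequality, in the spirit of Cianchi's program on optimal Orlicz--Sobolev embeddings. First, the target norm is rearrangement-invariant, so $\|u\|_{G_{N/s}}=\|u^{*}\|_{G_{N/s}}$, where $u^{*}$ is the decreasing rearrangement of $|u|$. On the source side, a fractional Pólya--Szegő inequality for convex modulars gives
$$\rho_{s,G}(u^{\#})\le \rho_{s,G}(u),$$
with $u^{\#}$ the symmetric decreasing rearrangement on $\mathbb{R}^{N}$; combined with Lemma \ref{comp norm modular}, this yields $[u^{\#}]_{s,G}\le [u]_{s,G}$. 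Thus the problem reduces to proving the bound for a radially decreasing $u\ge 0$, in which case everything becomes a statement about the single-variable function $u^{*}$ on $(0,\infty)$.

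The crucial step is a Maz'ya-type rearrangement inequality controlling the gap $u^{**}(t)-u^{*}(t)$, with $u^{**}(t):=\tfrac{1}{t}\int_{0}^{t}u^{*}(\sigma)\,d\sigma$, by a quantity built from $\rho_{s,G}(u)$ carrying the scaling factor $t^{-s/N}$ dictated by the homogeneity of the seminorm. I would prove this by truncating $u$ at the level $u^{*}(t)$, estimating the truncations via the fractional modular through a co-area-type argument on $\mathbb{R}^{N}$, and exploiting the equimeasurability of $u$ with $u^{*}$.

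Once this pointwise rearrangement bound is in hand, a one-dimensional Hardy inequality in Orlicz spaces closes the argument: integrating $G_{N/s}(u^{*}(t))\,dt$ and changing variables via $w=H^{-1}(\cdot)$ converts the integrand into one involving the weight $(\tau/G(\tau))^{s/(N-s)}$ appearing in the definition of $H$. This identifies $L^{G_{N/s}}$ as the optimal target and produces a modular estimate, which Lemma \ref{comp norm modular} then converts into the desired norm inequality $\|u\|_{G_{N/s}}\le C[u]_{s,G}$.

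The hardest part is the sharp rearrangement inequality for a general $N$-function $G$: one must carry through the co-area argument without relying on any homogeneity, using only convexity of $G$ and the two-sided $\Delta_{2}$ bounds in \eqref{G1}. A weaker, non-sharp version (yielding only an embedding into $L^{B}$ with $B\ll G^{*}$, as in Theorem \ref{continuity}) is considerably easier but does not suffice here, since the optimal target $L^{G_{N/s}}$ sits at the borderline and forces the specific Orlicz function $H$ that matches the 1D Hardy inequality.
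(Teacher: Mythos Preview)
The paper does not prove this theorem at all: it is quoted verbatim as \cite[Theorem 6.1]{ACP} (Alberico, Cianchi, Pick and Slav\'ikov\'a), with no argument given. So there is no ``paper's own proof'' to compare your proposal against.

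That said, your outline is in the right spirit and is essentially the strategy carried out in \cite{ACP}: reduce to radially decreasing functions by a rearrangement principle, control the decreasing rearrangement $u^{*}$ pointwise in terms of the fractional modular, and close with a one-dimensional Hardy-type inequality in Orlicz spaces whose optimal target is precisely $G_{N/s}=G\circ H^{-1}$. A couple of points where your sketch is thinner than what is actually needed: the fractional P\'olya--Szeg\H{o} step $\rho_{s,G}(u^{\#})\le \rho_{s,G}(u)$ is fine (it follows from the Riesz rearrangement inequality since $G$ is convex and even), but the ``Maz'ya-type'' bound you allude to is not simply $u^{**}(t)-u^{*}(t)\lesssim t^{-s/N}[u]_{s,G}$; in the Orlicz setting one needs a modular-level inequality relating $\int G_{N/s}(u^{*})\,dt$ to a weighted integral of $G$ applied to a suitable fractional difference quotient of $u^{*}$, and this is where the specific definition of $H$ (hence of $G_{N/s}$) enters in a non-negotiable way. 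Your final paragraph correctly flags that this sharp rearrangement/Hardy step is the crux, but ``co-area-type argument'' is doing a lot of work here---in the actual proof in \cite{ACP} this step occupies most of the effort and requires a careful two-sided estimate of the fractional seminorm of a radially decreasing function in terms of a one-dimensional weighted seminorm of $u^{*}$.
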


The result is optimal in the sense that  if the embedding also holds for another Orlicz function, then the space $L^{G_{N/s}}(\mathbb{R}^{N})$ is continuously embedded into the later. 

\subsection{Compact embeddings}
The following compact embedding follows from \cite[Theorems 1.1 and 1.3]{SCAB}.

\begin{theorem}\label{compact}
Suppose that assumptions $(HG)_1-(HG)_2$ and $(HV)$ hold. Let $B$ be an  N-function satisfying the $\Delta_2$ condition such that
$$\lim_{|t|\to 0}\dfrac{B(t)}{G(\lambda t)}=0, \text{ for all } \lambda> 0,$$and that
$$B\ll G^{*}.$$Then, the embedding
$$W_V^{s, G}(\mathbb{R}^{N}) \hookrightarrow L^{B}_V(\mathbb{R}^{N})$$is compact. Moreover, the embedding $$W_V^{s, G}(\mathbb{R}^{N}) \hookrightarrow L^{G}_V(\mathbb{R}^{N})$$is also compact.
\end{theorem}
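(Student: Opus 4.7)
\textit{Proof plan.} The argument follows the classical Schr\"odinger-type compactness scheme, adapted to the fractional Orlicz setting. Let $(u_n)$ be bounded in $W_V^{s,G}(\mathbb{R}^N)$. Since $G$ and $\widetilde{G}$ both satisfy the $\Delta_2$-condition (by \eqref{G1}), the space $W_V^{s,G}(\mathbb{R}^N)$ is reflexive, so a subsequence (still denoted $u_n$) converges weakly to some $u$; a further subsequence converges almost everywhere via the local step below. Replacing $u_n$ by $u_n-u$, we reduce to $u\equiv 0$, and the task is to show $\int_{\mathbb{R}^N} B(|u_n|) V\,dx \to 0$.

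\textbf{Local step on balls.} For each $R>0$, continuity of $V$ gives $V\in L^\infty(\overline{B_R})$, so the weighted and unweighted Orlicz norms on $B_R$ are comparable. Under $(HG)_1$--$(HG)_2$ and $B\ll G^*$, a Rellich-type theorem on bounded domains (which follows from Theorem \ref{continuity} combined with a Fr\'echet--Kolmogorov type criterion in the Orlicz framework) yields the compact embedding $W^{s,G}(B_R)\hookrightarrow L^B(B_R)$. Hence $u_n\to 0$ in $L^B(B_R)$ and
\[
\int_{B_R} B(|u_n|) V \,dx \le \|V\|_{L^\infty(B_R)} \int_{B_R} B(|u_n|)\,dx \longrightarrow 0.
\]

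\textbf{Tail step via coercivity.} For the tail $B_R^c$, use the continuous chain
$W_V^{s,G}(\mathbb{R}^N)\hookrightarrow W^{s,G}(\mathbb{R}^N)\hookrightarrow L^{G^*}(\mathbb{R}^N)$
(which holds because $V\ge V_0>0$, together with Theorem \ref{continuity}) to bound $\{u_n\}$ in $L^{G^*}(\mathbb{R}^N)$; combined with $B\ll G^*$ this renders $\{B(|u_n|)\}$ uniformly integrable on $\mathbb{R}^N$. Decompose
\[
B_R^c = \bigl(B_R^c\cap\{V<M\}\bigr) \cup \bigl(B_R^c\cap\{V\ge M\}\bigr).
\]
The coercivity hypothesis in $(V)$ gives $|B_R^c\cap\{V<M\}|\to 0$ as $R\to\infty$, so on this piece the bound $V<M$ plus uniform integrability of $B(|u_n|)$ makes the integral arbitrarily small. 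On $B_R^c\cap\{V\ge M\}$, split by the size of $|u_n|$: where $|u_n|\le\delta_\varepsilon$, the assumption $\lim_{t\to 0} B(t)/G(\lambda t)=0$ gives $B(|u_n|)\le\varepsilon G(|u_n|)$, so this portion is dominated by $\varepsilon\int G(|u_n|)V\,dx \le \varepsilon C$; where $|u_n|\ge\delta_\varepsilon$, Chebyshev's inequality applied to $\int_{\{V\ge M\}} G(|u_n|)\,dx \le (1/M)\int G(|u_n|)V\,dx \le C/M$ shows the region has measure at most $C/(M\,G(\delta_\varepsilon))$, and uniform integrability of $\{B(|u_n|)\}$ together with the bound $B(|u_n|)V$ controlled by $B(|u_n|)\cdot \tfrac{G(|u_n|)V}{G(\delta_\varepsilon)}$ completes the estimate. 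Choosing $\varepsilon$, then $M$, then $R$, and finally $n$ sufficiently large delivers $\int_{\mathbb{R}^N} B(|u_n|) V\,dx \to 0$.

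\textbf{The $L^G_V$ embedding and principal obstacle.} The compact embedding $W_V^{s,G}(\mathbb{R}^N)\hookrightarrow L^G_V(\mathbb{R}^N)$ follows by specializing $B=G$ in the above scheme: the small-$t$ hypothesis becomes automatic (via $G$'s $\Delta_2$ property and $p^->1$), while the tail and local steps carry over verbatim. The principal technical obstacle is the tail estimate on $\{V\ge M\}\cap B_R^c$, where $V$ itself is unbounded so a weighted $L^{G^*}$ bound is not directly available; this is precisely why both hypotheses on $B$ are needed, with $B\ll G^*$ supplying uniform integrability for the large-value part of $|u_n|$ and $\lim_{t\to 0} B(t)/G(\lambda t)=0$ handling the small-value part through the $V$-weighted $L^G$ bound. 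The interplay with the coercivity of $V$ is what ultimately supplies the missing compactness at infinity.
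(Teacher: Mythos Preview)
The paper does not prove Theorem~\ref{compact}; it merely cites \cite[Theorems~1.1 and~1.3]{SCAB}. So there is no in-paper argument to compare against, and your sketch must stand on its own. The overall strategy---reflexivity, local Rellich compactness, coercive-potential tail control---is the right one, and the local step together with the small-$|u_n|$ portion of the tail goes through. There are, however, two genuine gaps.

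\emph{Weighted tail where both $V$ and $|u_n|$ are large.} On $E:=B_R^c\cap\{V\ge M\}\cap\{|u_n|\ge\delta_\varepsilon\}$ you correctly obtain $|E|\le C/(M\,G(\delta_\varepsilon))$, and uniform integrability of $\{B(|u_n|)\}$ (from $B\ll G^*$ together with the uniform $L^{G^*}$ bound) does make the \emph{unweighted} integral $\int_E B(|u_n|)\,dx$ small. But the target is $\int_E B(|u_n|)\,V\,dx$, with $V$ unbounded on $E$. Your inequality $B(|u_n|)V\le B(|u_n|)\cdot G(|u_n|)V/G(\delta_\varepsilon)$ merely trades the goal for $\int_E B(|u_n|)\,G(|u_n|)\,V\,dx$, for which no a~priori estimate is available; and small measure plus unweighted uniform integrability of $B(|u_n|)$ cannot absorb an unbounded weight. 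This is precisely the crux of the compactness, and as written the argument does not close it.

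\emph{The case $B=G$.} You assert that the small-$t$ hypothesis becomes automatic when $B=G$. It does not: by \eqref{G product} one has $G(t)/G(\lambda t)\ge \min\{\lambda^{-p^-},\lambda^{-p^+}\}>0$ for every $t>0$, so $\lim_{t\to 0}G(t)/G(\lambda t)\neq 0$. Hence the $L^G_V$ embedding cannot be obtained by specializing your first argument and requires a separate treatment.
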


We also quote the next compact embedding for magnetic fractional spaces from \cite{FSMag}.

\begin{theorem}\label{compact locally}
Let $s \in (0, 1)$ and $G$ be an N-function satisfying the $\Delta_2$ condition. Then for every bounded sequence in $W^{s, G}_{A}(\mathbb{R}^{N}, \mathbb{C})$ there exist a function $u \in W^{s, G}_{A}(\mathbb{R}^{N}, \mathbb{C})$ and a subsequence $u_{n_k}$  such that 
$$u_{n_k} \to u \text{ in }L^{G}_{loc}(\mathbb{R}^{N}, \mathbb{C}).$$
\end{theorem}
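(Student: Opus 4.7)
My strategy is to localize the problem to bounded sets $K\Subset\mathbb{R}^N$, trade the magnetic fractional seminorm for the ordinary (non-magnetic) fractional Orlicz--Sobolev seminorm via a pointwise ``untwisting'' estimate, and then invoke a classical Rellich--Kondrachov-type compact embedding on bounded domains.

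The core observation is that for $x,y\in K$ one may write $u_n(x)-u_n(y) = [u_n(x)-e^{i(x-y)\cdot A((x+y)/2)}u_n(y)] + [e^{i(x-y)\cdot A((x+y)/2)}-1]\,u_n(y)$; dividing by $|x-y|^s$ and using $|e^{i\phi}-1|\leq|\phi|$ together with the smoothness of $A$ on $\overline K$ yields
\begin{equation*}
|D_s u_n(x,y)| \leq |D_s^A u_n(x,y)| + C_K\,|x-y|^{1-s}\,|u_n(y)|, \quad x,y\in K.
\end{equation*}
Applying $G$, the convexity and $\Delta_2$ condition, and the comparison \eqref{G product}, I would then obtain
\begin{equation*}
\int_{K\times K} G(|D_s u_n|)\,d\mu \;\leq\; C\,\rho^A_{s,G}(u_n) \;+\; C\,I(K)\,\rho_G(u_n),
\end{equation*}
with $I(K):=\sup_{y\in K}\int_K \max\{|x-y|^{(1-s)p^-},|x-y|^{(1-s)p^+}\}\,|x-y|^{-N}\,dx$. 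Since $(1-s)p^->0$, the kernel is integrable near the diagonal, so $I(K)<\infty$, and $(u_n|_K)$ is bounded in the non-magnetic space $W^{s,G}(K,\mathbb{C})$ (applied to real and imaginary parts).

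The classical compact embedding $W^{s,G}(K,\mathbb{C})\hookrightarrow\hookrightarrow L^G(K,\mathbb{C})$ on bounded $K$ then produces a subsequence converging strongly in $L^G(K,\mathbb{C})$. An exhaustion of $\mathbb{R}^N$ by balls together with a diagonal extraction yields a subsequence $u_{n_k}\to u$ in $L^G_{\mathrm{loc}}(\mathbb{R}^N,\mathbb{C})$ and, after one further extraction, almost everywhere. To obtain $u\in W^{s,G}_A(\mathbb{R}^N,\mathbb{C})$, I would apply Fatou's lemma to the nonnegative integrands $G(|u_{n_k}|)$ and $G(|D_s^A u_{n_k}(x,y)|)$, together with Lemma \ref{comp norm modular}, to conclude from the uniform modular bounds that $\rho_G(u)+\rho^A_{s,G}(u)<\infty$.

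The main obstacle is verifying the bound on $\int_{K\times K} G(|D_s u_n|)\,d\mu$: the extra factor $|x-y|^{1-s}$ coming from $|e^{i\phi}-1|\leq|\phi|$ must genuinely tame the singularity $|x-y|^{-N}$ after composition with $G$. This forces splitting $K\times K$ into $\{|x-y|<1\}$ and $\{|x-y|\geq 1\}$ to use both sides of \eqref{G product}, and it is here that the growth condition \eqref{G1}, in particular $p^->1$, is essential. Once this ``diamagnetic-style'' localization is in place, the remainder reduces to standard functional-analytic machinery.
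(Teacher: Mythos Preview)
The paper does not prove this theorem; it is quoted from the reference \cite{FSMag}. Your proposal is correct and is essentially the natural argument one expects in that reference: localize to a bounded set, ``untwist'' the magnetic phase via
\[
u_n(x)-u_n(y)=\bigl[u_n(x)-e^{i(x-y)\cdot A((x+y)/2)}u_n(y)\bigr]+\bigl[e^{i(x-y)\cdot A((x+y)/2)}-1\bigr]u_n(y),
\]
bound the ordinary fractional Orlicz seminorm on $K\times K$ by the magnetic one plus a lower-order term, apply the standard compact embedding $W^{s,G}(K)\hookrightarrow L^{G}(K)$ on bounded domains, diagonalize over an exhaustion, and finish with Fatou to place the limit in $W^{s,G}_A$.

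One small technical point is worth flagging. Your appeal to \eqref{G product} presupposes condition \eqref{G1}, i.e., that both $G$ and $\widetilde G$ satisfy $\Delta_2$, whereas the theorem as stated only assumes $\Delta_2$ for $G$. This is easily repaired and in fact simplifies the argument: for $0\le a\le 1$, convexity together with $G(0)=0$ already gives $G(ab)\le a\,G(b)$, so near the diagonal the extra term contributes the kernel $|x-y|^{1-s-N}$, which is locally integrable simply because $s<1$; no lower exponent $p^{-}>1$ is needed. Away from the diagonal, $|x-y|$ is bounded on $K\times K$ and $\Delta_2$ for $G$ alone yields $G(C_K|x-y|^{1-s}|u_n(y)|)\le C\,G(|u_n(y)|)$. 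With this adjustment your proof goes through under exactly the hypotheses stated.
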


\subsection{Notion of solutions} We start with the definition of weak solutions to \eqref{main eq}.
\begin{definition}
We say that $u \in W_{A, V}^{s, G}(\mathbb{R}^{N}, \mathbb{C})$ is a weak solution of the equation \eqref{main eq} if for any $v \in W_{A, V}^{s, G}(\mathbb{R}^{N}, \mathbb{C})$, there holds
\begin{equation}\label{weak solution}
\begin{split}
&\mathcal{R}\bigg[M\left( \rho_{s, G}^{A}(u)\right)\int_{\mathbb{R}^{2N}}\dfrac{g\left( |D_s^{A}u(x, y)|\right)}{|D_s^{A}u(x, y)|}D_s^{A}u(x, y)\overline{D_s^{A}v(x, y)}\,d\mu \\ & \qquad+ \int_{\mathbb{R}^{N}}\dfrac{g(|u|)}{|u|}u\overline{v}V(x)\,dx \bigg]  = \mathcal{R}\bigg[ \int_{\mathbb{R}^{N}}f(x, |u|)u\overline{v}\,dx\bigg].
\end{split}
\end{equation}
\end{definition}
As a motivation for the definition of weak solutions, consider first the following integral functional $\mathcal{I}: W_{A, V}^{s, G}(\mathbb{R}^{N}, \mathbb{C}) \to \mathbb{R}$ given by
\begin{equation}\label{functional}
\mathcal{I}(u):=\mathcal{M}\left( \rho_{s, G}^{A}(u)\right) + \rho_{G, V}(u)-\int_{\mathbb{R}^{N}}F(x, |u|)\,dx, \quad u \in W_{A, V}^{s, G}(\mathbb{R}^{N}, \mathbb{C}).
\end{equation}Then calculating the Gateaux derivatives, we obtain:
$$\dfrac{d}{d\varepsilon}\mathcal{M}\left( \rho_{s, G}^{A}(u+\varepsilon v)\right)\Big|_{\varepsilon=0}=\mathcal{R}\bigg[M\left( \rho_{s, G}^{A}(u)\right)\int_{\mathbb{R}^{2N}}\dfrac{g\left( |D_s^{A}u(x, y)|\right)}{|D_s^{A}u(x, y)|}D_s^{A}u(x, y)\overline{D_s^{A}v(x, y)}\,d\mu \bigg],$$

$$\dfrac{d}{d\varepsilon}\rho_{G, V}(u+\varepsilon v)\Big|_{\varepsilon=0}=\mathcal{R}\bigg[\int_{\mathbb{R}^{N}}\dfrac{g(|u|)}{|u|}u\overline{v}V(x)\,dx \bigg] $$and
$$\dfrac{d}{d\varepsilon}\left(\int_{\mathbb{R}^{N}}F(x, |u+\varepsilon v|)\,dx\right)\Big|_{\varepsilon=0}=\mathcal{R}\bigg[ \int_{\mathbb{R}^{N}}f(x, |u|)u\overline{v}\,dx\bigg].$$Hence, $u \in  W_{A, V}^{s, G}(\mathbb{R}^{N}, \mathbb{C})$ is a weak solution of $\eqref{main eq}$ if  and only if $u$ is a critical point of $\mathcal{I}$. 

Moreover, observe that all the terms in \eqref{weak solution} are well-defined. Indeed, we first have
\begin{equation}
\begin{split}
&\bigg|\dfrac{g\left( |D_s^{A}u(x, y)|\right)}{|D_s^{A}u(x, y)|}D_s^{A}u(x, y)\overline{D_s^{A}v(x, y)}\bigg| \leq  g\left( |D_s^{A}u(x, y)|\right).|D_s^{A}v(x, y)| \\ & \qquad \leq  \tilde{G}\left( g\left( |D_s^{A}u(x, y)|\right)\right) + G\left( |D_s^{A}v(x, y)|\right) \\ & \qquad \leq G\left( |D_s^{A}u(x, y)|\right)+ G\left( |D_s^{A}v(x, y)|\right) \in L^{1}_\mu(\mathbb{R}^{2N}),
\end{split}
\end{equation}where we have used \eqref{2.5} in the second line and Lemma \ref{G g} in the third line. Regarding the second term in \eqref{weak solution}, it is finite  by Holder's inequality since $g(|u|) \in L_V^{\tilde{G}}(\mathbb{R}^{N})$ and $v \in L_V^{G}(\mathbb{R}^{N})$.  Finally, for $\varepsilon >0$, we get from Remark \ref{remark}
\begin{equation}
\begin{split}
\bigg\vert \int_{\mathbb{R}^{N}}f(x, |u|)u\overline{v}\,dx\bigg\vert &\leq \varepsilon \int_{\mathbb{R}^{N}}g'(|u|)|u||v|\,dx + C_\varepsilon \int_{\mathbb{R}^{N}}b'(|u|)|u||v|\,dx \\ & \leq \varepsilon\int_{\mathbb{R}^{N}}(p^{+}-1)g(|u|)|v|\,dx + C_\varepsilon\int_{\mathbb{R}^{N}}(p_B^{+}-1)b(|u|)|v|\,dx \qquad (\text{by }\eqref{cG0}) \\ & =I_1+I_2.
\end{split}
\end{equation}Since $g(|u|) \in L^{\tilde{G}}(\mathbb{R}^{N})$ by Lemma \ref{G g}, H\"{o}lder's inequality implies that $I_1$ is finite. Regarding $I_2$, we have by Young's inequality \eqref{2.5} that
$$|I_2|\leq C\int_{\mathbb{R}^{N}}\left(\tilde{B}(b(|u|)) + B(|u|) \right)\,dx \leq C \rho_B(u).$$The last term is finite because of the embedding $W^{s, G}_{A, V}(\mathbb{R}^{N}, \mathbb{C}) \hookrightarrow L^{B}(\mathbb{R}^{N})$. 

\section{Existence of solutions: proof of Theorem \ref{existence}}\label{SEC 2}
In this section we will prove the existence of solutions to \eqref{main eq}. It will be enough to prove that $\mathcal{I}$ has critical points. In order to state that, we shall establish first some general properties of the functional $\mathcal{I}$.

Throughout the section we assume  $(HG)_1-(HG)_2$, $(HM)_1-(HM)_2$ and $(Hf)_1-(Hf)_3$.

\begin{lemma}The functional $\mathcal{I}$ belongs to the class $C^{1}(W^{s, G}_{A, V}(\mathbb{R}^{N}, \mathbb{C}), \mathbb{R})$.
\end{lemma}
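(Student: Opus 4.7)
The plan is to decompose $\mathcal{I} = \mathcal{I}_1 + \mathcal{I}_2 - \mathcal{I}_3$ with
\begin{equation*}
\mathcal{I}_1(u) = \mathcal{M}\left(\rho_{s,G}^{A}(u)\right), \quad \mathcal{I}_2(u) = \rho_{G, V}(u), \quad \mathcal{I}_3(u) = \int_{\mathbb{R}^{N}} F(x, |u|)\,dx,
\end{equation*}
and to show that each term is continuously Gateaux differentiable on $W_{A, V}^{s, G}(\mathbb{R}^{N}, \mathbb{C})$; the candidate derivatives are precisely those displayed in the motivation following \eqref{functional}, and $C^{1}$-regularity on a Banach space follows once we have continuous Gateaux differentiability.

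To justify Gateaux differentiability, I would pass to the limit inside the difference quotients. For $\mathcal{I}_2$ and $\mathcal{I}_1$, the pointwise limits of $t^{-1}[G(|u + tv|) - G(|u|)]$ on $\mathbb{R}^{N}$ and $t^{-1}[G(|D_s^{A}(u+tv)|) - G(|D_s^{A}u|)]$ on $\mathbb{R}^{2N}$ are computed directly; dominated convergence then applies with majorants built from \eqref{G product}, Young's inequality \eqref{2.5}, and Lemma \ref{G g}. For $\mathcal{I}_1$ one further composes with $\mathcal{M} \in C^{1}$ via the chain rule. For $\mathcal{I}_3$, the difference quotient is dominated by the integrable bound from Remark \ref{remark}, namely $|f(x, t)| \leq \varepsilon g'(t) + C_{\varepsilon} b'(t)$, pairing the two terms with $g(|u|)v \in L^{\widetilde G}\cdot L^{G}$ and $b(|u|)v \in L^{\widetilde B}\cdot L^{B}$ respectively.

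For the $C^{1}$ part, I would take $u_n \to u$ in $W^{s,G}_{A,V}(\mathbb{R}^N,\mathbb{C})$ and show $\mathcal{I}'(u_n) \to \mathcal{I}'(u)$ uniformly on the unit ball. By Lemma \ref{comp norm modular} together with the $\Delta_2$ condition on $G$, $\rho_{s,G}^{A}(u_n) \to \rho_{s,G}^{A}(u)$, so $M(\rho_{s,G}^{A}(u_n)) \to M(\rho_{s,G}^{A}(u))$ by continuity of $M$. The key step is then to prove
\begin{equation*}
\frac{g(|D_s^{A} u_n|)}{|D_s^{A} u_n|}D_s^{A} u_n \longrightarrow \frac{g(|D_s^{A} u|)}{|D_s^{A} u|}D_s^{A} u \quad \text{in } L_\mu^{\widetilde{G}}(\mathbb{R}^{2N}),
\end{equation*}
after which Lemma \ref{Holder mu} supplies the required uniform control of $\mathcal{I}_1'(u_n) - \mathcal{I}_1'(u)$ against test functions. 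This is accomplished by extracting an a.e.\ convergent subsequence of $D_s^{A}u_n$ (modular convergence yields a.e.\ convergence along a subsequence), applying Vitali's theorem with the equi-integrability of $\{\widetilde{G}(g(|D_s^{A} u_n|))\}$ guaranteed by Lemma \ref{G g} and the $\Delta_2$ condition on $\widetilde{G}$, and then upgrading to full convergence via the usual subsequence-of-subsequence argument. Continuity of $\mathcal{I}_2'$ is similar but simpler. For $\mathcal{I}_3'$, the diamagnetic inequality \eqref{diamagnetic ineq} combined with Theorems \ref{continuity} and \ref{embedding continuous} yields convergence $|u_n| \to |u|$ in $L_V^{G}$ and $L^{B}$; standard continuity of Nemytskii operators in Orlicz spaces (justified by the $\Delta_2$ conditions on $G, B$ and their complements) then transfers to continuity of $\mathcal{I}_3'$ via Holder's inequality, using the two-term growth from Remark \ref{remark}.

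I expect the main obstacle to be exactly the $L_\mu^{\widetilde{G}}$-convergence of the nonlinear magnetic flux $\frac{g(|D_s^{A}u|)}{|D_s^{A}u|}D_s^{A}u$: the factor $e^{i(x-y)\cdot A(\frac{x+y}{2})}$ entangles the two variables through the measure $d\mu$, so the real-valued Nemytskii-continuity theorems in $\mathbb{R}^N$ do not apply directly; one must work with the weighted product space $(\mathbb{R}^{2N}, \mu)$, check that the complex map $z \mapsto g(|z|)z/|z|$ is continuous at $z = 0$ on $\mathbb{C}$ (which follows from \eqref{gg product}), and combine this with Vitali's criterion on the doubled space.
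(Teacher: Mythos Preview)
Your proposal is correct and follows the same overall strategy as the paper: decompose $\mathcal{I}$ into three pieces and prove that each Gateaux derivative depends continuously on $u$. The implementations differ in two places. First, for the continuity of $\mathcal{I}_1'$ and $\mathcal{I}_2'$, the paper does not use Vitali's theorem; instead it invokes Proposition~\ref{convergence result} (in the Appendix) to extract a subsequence together with an explicit pointwise dominant $h \in L^{G} \cap L_V^{B}$ with $|u_{n_k}| \le h$, and then applies Lebesgue's dominated convergence theorem directly. This is the Orlicz analogue of the classical $L^p$ trick, and it sidesteps the need to verify tightness on the infinite-measure space $(\mathbb{R}^{2N},\mu)$ that your Vitali argument requires. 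Second, for $\mathcal{I}_3'$ the paper is more explicit than your appeal to ``standard Nemytskii continuity'': because the growth bound $|f(x,t)t| \le C(g(t) + b(t))$ mixes two N-functions, the paper splits $\mathbb{R}^N$ into $\{h < c\}$ and $\{h \ge c\}$ (with $c$ chosen so that $g \le b$ on $[c,\infty)$ and $b \le Cg$ on $[0,c]$), applies H\"older in $L^{\widetilde G} \times L^G$ on the first set and in $L^{\widetilde B} \times L^B$ on the second, and concludes by dominated convergence on each piece. Your route works, but this two-regime splitting is precisely the content hidden behind the phrase ``Nemytskii continuity'' in this setting.
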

\begin{proof}
Given $u \in W^{s, G}_{A, V}(\mathbb{R}^{N}, \mathbb{C})$, the Gateaux derivative of $\mathcal{I}$ at $u$ is $$\mathcal{I}'(u)=\mathcal{I}_1'(u)+\mathcal{I}_2'(u)+\mathcal{I}_3'(u),$$where
$$\left\langle \mathcal{I}_1'(u), v \right\rangle = \mathcal{R}\bigg[M\left( \rho_{s, G}^{A}(u)\right)\int_{\mathbb{R}^{2N}}\dfrac{g\left( |D_s^{A}u(x, y)|\right)}{|D_s^{A}u(x, y)|}D_s^{A}u(x, y)\overline{D_s^{A}v(x, y)}\,d\mu \bigg],$$
$$\left\langle \mathcal{I}_2'(u), v \right\rangle =\mathcal{R}\bigg[\int_{\mathbb{R}^{N}}\dfrac{g(|u|)}{|u|}u\overline{v}V(x)\,dx \bigg]$$and
$$\left\langle \mathcal{I}_2'(u), v \right\rangle =\mathcal{R}\bigg[ \int_{\mathbb{R}^{N}}f(x, |u|)u\overline{v}\,dx\bigg].$$Assume that $u_n \to u$ in $W^{s, G}_{A, V}(\mathbb{R}^{N}, \mathbb{C})$.  Then $|u_n| \to |u|$ in $W_V^{s, G}(\mathbb{R}^{N})$. Let $u_{n_k}$ be any subsequence of $u_n$. By Proposition \ref{convergence result}, there is $h \in L^{G}(\mathbb{R}^{N})\cap L_V^{B}(\mathbb{R}^{N})$ and a further subsequence, that we do not relabel, such that
\begin{equation}\label{h}
|u_{n_k}| \leq h \quad a.e.
\end{equation}We shall prove the continuity of $\mathcal{I}'_2$. The argument is similar for $\mathcal{I}'_1$. Suppose that $\|v\|_{s, G, V}^{A}\leq 1$, we have
$$\bigg|\left\langle \mathcal{I}_2'(u_{n_k})-\mathcal{I}_2'(u), v \right\rangle \bigg|=\bigg|\mathcal{R}\bigg[\int_{\mathbb{R}^{N}}\left(\dfrac{g(|u_{n_k}|)}{|u_{n_k}|}u_{n_k}-\dfrac{g(|u|)}{|u|}u\right)\overline{v}V(x)\,dx \bigg] \bigg|.$$Observe that by Lemma \ref{G g},
$$\rho_{\tilde{G}}\left( \dfrac{g(|u_{n_k}|)}{|u_{n_k}|}u_{n_k}\right) \leq C\rho_G(u_{n_k}) < \infty.$$Hence, by H\"{o}lder's inequality, we get
\begin{equation}\label{holder 1}
\bigg|\left\langle \mathcal{I}_2'(u_{n_k})-\mathcal{I}_2'(u), v \right\rangle \bigg| \leq C\bigg\| \bigg\vert\dfrac{g(|u_{n_k}|)}{|u_{n_k}|}u_{n_k}-\dfrac{g(|u|)}{|u|}u\bigg\vert\bigg\|_{\tilde{G}, V}.\||v|\|_{G, V}.
\end{equation}Since
$$\dfrac{g(|u_{n_k}|)}{|u_{n_k}|}u_{n_k}-\dfrac{g(|u|)}{|u|}u \to 0 \quad a.e.$$and 
$$\bigg\vert \dfrac{g(|u_{n_k}|)}{|u_{n_k}|}u_{n_k}-\dfrac{g(|u|)}{|u|}u\bigg\vert \leq g(h)+ g(|u|) \in L^{\tilde{G}}(\mathbb{R}^{N})$$by Lebesgue's Theorem we get
$$\rho_{\tilde{G}}\left( \dfrac{g(|u_{n_k}|)}{|u_{n_k}|}u_{n_k}-\dfrac{g(|u|)}{|u|}u\right) \to 0 \quad \text{ as }n \to \infty.$$Hence, \eqref{holder 1} yields the convergence $\mathcal{I}'_2(u_{n_k}) \to \mathcal{I}'_2(u)$. Since this holds for any subsequence, we prove the convergence of the whole sequence $\mathcal{I}'_2(u_{n}) \to \mathcal{I}'_2(u)$. 

Let us prove the continuity of $\mathcal{I}'_3$. As before, we consider an arbitrary subsequence $u_{n_k}$ and we assume, without loss of generality, that  \eqref{h} holds. Then, for $c >0$ to be chosen later,
\begin{equation}\label{eqq92}
\begin{split}
\bigg|\left\langle \mathcal{I}_3'(u_{n_k})-\mathcal{I}_3'(u), v \right\rangle \bigg| &= \int_{\left\lbrace h < c\right\rbrace }|f(x, |u_{n_k}|)u_{n_k}- f(x, |u|)u||v|\,dx +\\ & +\int_{\left\lbrace h \geq c\right\rbrace }|f(x, |u_{n_k}|)u_{n_k}- f(x, |u|)u||v|\,dx.
\end{split}
\end{equation}We let $A_1=\left\lbrace h < c\right\rbrace$ and $A_2=\left\lbrace h \geq c\right\rbrace$.  Since $G \ll B$ and 
$$\lim_{|t|\to 0}\dfrac{B(t)}{G(t)}=0,$$we have that
$$\lim_{t \to \infty}\dfrac{g(t)}{b(t)}=0 \quad \text{ and }\quad \lim_{|t|\to 0}\dfrac{b(t)}{g(t)}=0.$$Therefore, we may choose $c >0$ such that
\begin{equation}\label{eqq 90}
g(t) \leq b(t) \text{ for }t >c.
\end{equation}Moreover, there is $\delta > 0$, $\delta < c$,  such that
$$b(t) \leq g(t) \text{ for }t \in [0, \delta].$$
Hence, there exists a constant $C>0$ depending on $\delta$ and $c$ such that
\begin{equation}\label{eqq 91}
b(t) \leq g(t)+ Cg(t)\chi_{[\delta,c ]}
\end{equation}for any $t \in [0, c]$, and where $\chi_{[\delta,c ]}$ is the characteristic of the interval $[\delta, c]$.

Over the set $A_1$, we then have by \eqref{eqq 91} and Remark \ref{remark} that
$$|f(x, |u|)u| \leq C\varepsilon g'(|u|)|u| + C_\varepsilon b'(|u|)|u| \leq C_\varepsilon g(h)+C_\varepsilon g(h) \leq C b(h).$$Hence, $|f(x, |u|)|u|| \in L^{\tilde{G}}(A_1)$ and it is dominated by $Cb(h)$. Similarly,$$|f(x, |u_{n_k}|)u_{n_k}| \in L^{\tilde{G}}(A_1).$$ Moreover, over the set $A_2$, we appeal to \eqref{eqq 90} to conclude that $$|f(x, |u|)u|, |f(x, |u_{n_k}|)u_{n_k}| \in L^{\tilde{B}}(A_2)$$and they are majorized by $Cb(h)$. Hence, applying H\"{o}lder's inequality in \eqref{eqq92} gives
\begin{equation}\label{eqq93}
\begin{split}
\bigg|\left\langle \mathcal{I}_3'(u_{n_k})-\mathcal{I}_3'(u), v \right\rangle \bigg| &\leq C\bigg(\||f(x, |u_{n_k}|)u_{n_k}-f(x, |u|)u|\|_{\tilde{G}, A_1}\||v|\|_{G, A_1}\\ & +\||f(x, |u_{n_k}|)u_{n_k}-f(x, |u|)u|\|_{\tilde{B}, A_2}\||v|\|_{B, A_2}\bigg).
\end{split}
\end{equation}
Finally, by Lebesgue's Theorem, it follows that $\mathcal{I}'_3(u_{n_k}) \to \mathcal{I}'_3(u)$. This ends the proof.  
\end{proof}

The following is the definition of the Palais-Smale condition (PS in brief) introduced in \cite{PS}. 

\begin{definition}
We say that the functional  $\mathcal{I}$ satisfies the Palais-Smale condition if any sequence $\left\lbrace u_n \right\rbrace \subset W^{s, G}_{A, V}(\mathbb{R}^{N}, \mathbb{C})$ such that
$\left\lbrace \mathcal{I}(u_n)\right\rbrace$ is bounded and $\mathcal{I}'(u_n)\to 0$ as $n \to \infty$, admits a (strongly) convergent subsequence in $W^{s, G}_{A, V}(\mathbb{R}^{N}, \mathbb{C})$.  
\end{definition}

In the next lemma we prove that $\mathcal{I}$ satisfies the PS-condition.

\begin{lemma}\label{ps condicion}
 The functional $\mathcal{I}$ satisfies the PS-condition in $W^{s, G}_{A, V}(\mathbb{R}^{N}, \mathbb{C})$.  
\end{lemma}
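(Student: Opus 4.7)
Let $(u_n)\subset W_{A,V}^{s,G}(\mathbb{R}^N,\mathbb{C})$ satisfy $|\mathcal{I}(u_n)|\leq C$ and $\mathcal{I}'(u_n)\to 0$ in the dual. I would structure the proof in three stages: (i) establish boundedness of $(u_n)$ in the magnetic norm using the AR condition together with $(HM)_2$; (ii) pass to a weak limit $u$ and exploit the compact embeddings---via the diamagnetic inequality \eqref{diamagnetic ineq} and Theorems \ref{embedding continuous}, \ref{compact}, \ref{compact locally}---to obtain strong convergence of the potential and nonlinearity terms appearing when $\mathcal{I}'(u_n)$ is tested against $u_n-u$; and (iii) use the monotonicity of $z\mapsto (g(|z|)/|z|)z$ on $\mathbb{C}$ to upgrade the remaining Kirchhoff piece to strong convergence of the seminorm $[u_n-u]_{s,G}^{A}\to 0$.

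\textbf{Step 1: boundedness.} The standard move is to examine $\mathcal{I}(u_n)-\mu^{-1}\langle\mathcal{I}'(u_n),u_n\rangle$. Using $\mathcal{M}(t)\geq\theta^{-1}M(t)t$ from $(HM)_2$, the upper bound $tg(t)\leq p^{+}G(t)$ from \eqref{G1}, and the AR condition $(Hf)_3$, this difference is bounded below by
\begin{equation*}
\Bigl(\tfrac{1}{\theta}-\tfrac{p^{+}}{\mu}\Bigr)M\bigl(\rho_{s,G}^{A}(u_n)\bigr)\rho_{s,G}^{A}(u_n)+\Bigl(1-\tfrac{p^{+}}{\mu}\Bigr)\rho_{G,V}(u_n),
\end{equation*}
with both coefficients strictly positive thanks to $(Hf)_3$ and the resulting $\mu>\theta p^{+}>p^{+}$. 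On the other hand, this difference is majorized by $C+\varepsilon_n\|u_n\|_{s,G,V}^{A}$ with $\varepsilon_n\to 0$. If $\rho_{s,G}^{A}(u_n)$ were unbounded, $(HM)_1$ would keep $M(\rho_{s,G}^{A}(u_n))\geq\delta>0$, and Lemma \ref{comp norm modular} would yield a lower bound by $([u_n]_{s,G}^{A})^{p^{-}}$, contradicting the linear-in-norm right-hand side; the same reasoning controls $\rho_{G,V}(u_n)$. Hence $(u_n)$ is bounded.

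\textbf{Step 2: weak limit and lower-order convergence.} Along a subsequence $u_n\rightharpoonup u$ in $W_{A,V}^{s,G}(\mathbb{R}^N,\mathbb{C})$. By \eqref{diamagnetic ineq} and Theorem \ref{embedding continuous}, $(|u_n|)$ is bounded in $W_V^{s,G}(\mathbb{R}^N)$, so Theorem \ref{compact} yields $|u_n|\to|u|$ strongly in $L_V^{G}(\mathbb{R}^N)\cap L_V^{B}(\mathbb{R}^N)$. A standard tail argument based on the coercivity of $V$ in $(V)$ combined with the local compactness in Theorem \ref{compact locally} upgrades this to $u_n\to u$ in $L_V^{G}(\mathbb{R}^N,\mathbb{C})\cap L_V^{B}(\mathbb{R}^N,\mathbb{C})$. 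Testing $\langle\mathcal{I}'(u_n),u_n-u\rangle\to 0$, H\"older's inequality in Orlicz spaces together with Lemma \ref{G g} and the control of $f$ from Remark \ref{remark} show that $\int(g(|u_n|)/|u_n|)u_n\overline{(u_n-u)}V\,dx$ and $\int f(x,|u_n|)u_n\overline{(u_n-u)}\,dx$ both vanish in the limit, so
\begin{equation*}
M\bigl(\rho_{s,G}^{A}(u_n)\bigr)\,\mathcal{R}\!\int_{\mathbb{R}^{2N}}\tfrac{g(|D_s^{A}u_n|)}{|D_s^{A}u_n|}\,D_s^{A}u_n\,\overline{D_s^{A}(u_n-u)}\,d\mu \longrightarrow 0.
\end{equation*}

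\textbf{Step 3: strong convergence via monotonicity.} If $\rho_{s,G}^{A}(u_n)\to 0$ along a subsequence, Lemma \ref{comp norm modular} forces $[u_n]_{s,G}^{A}\to 0$, weak lower semicontinuity then gives $[u]_{s,G}^{A}=0$, and strong convergence of the seminorm follows. Otherwise $\liminf\rho_{s,G}^{A}(u_n)>0$ and $(HM)_1$ provides a uniform lower bound $M(\rho_{s,G}^{A}(u_n))\geq\delta>0$, so the Kirchhoff integral above vanishes. By weak convergence the linear-in-$v$ quantity $\mathcal{R}\int(g(|D_s^{A}u|)/|D_s^{A}u|)D_s^{A}u\,\overline{D_s^{A}(u_n-u)}\,d\mu\to 0$; subtracting, the pointwise nonnegative integrand (by convexity of $z\mapsto G(|z|)$)
\begin{equation*}
\mathcal{R}\!\left[\Bigl(\tfrac{g(|D_s^{A}u_n|)}{|D_s^{A}u_n|}D_s^{A}u_n-\tfrac{g(|D_s^{A}u|)}{|D_s^{A}u|}D_s^{A}u\Bigr)\overline{D_s^{A}(u_n-u)}\right]
\end{equation*}
tends to $0$ in $L_{\mu}^{1}(\mathbb{R}^{2N})$. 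A Simon-type inequality in the Orlicz framework then bounds $G(|D_s^{A}(u_n-u)|)$ by this integrand, forcing $\rho_{s,G}^{A}(u_n-u)\to 0$ and hence $[u_n-u]_{s,G}^{A}\to 0$. Combined with $\|u_n-u\|_{G,V}\to 0$ from Step 2, this yields $u_n\to u$ in $W_{A,V}^{s,G}(\mathbb{R}^N,\mathbb{C})$. The principal obstacles I expect are (a) producing a quantitative Simon-type lower bound valid for the \emph{complex} magnetic increments and for a general N-function $G$---the real scalar case is classical, but the convexity argument must be adapted to $\mathbb{C}\simeq\mathbb{R}^{2}$---and (b) promoting the merely local $L^{G}$-compactness of Theorem \ref{compact locally} to a global $L_V^{G}$-compactness for complex-valued sequences using the coercive weight.
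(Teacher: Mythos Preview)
Your outline is essentially the paper's proof, with one organizational difference worth noting. The paper tests $\langle\mathcal{I}'(u_n)-\mathcal{I}'(u),u_n-u\rangle$ and applies the monotonicity (``Simon-type'') inequality---stated and proved in the Appendix as Lemma \ref{inequality g} for vectors in $\mathbb{R}^{N}$, hence for $\mathbb{C}\simeq\mathbb{R}^{2}$, using the convexity hypothesis $(HG)_3$---to \emph{both} the seminorm piece $F_1$ and the potential piece $F_2$ simultaneously. This yields $\rho_{s,G}^{A}(u_n-u)+\rho_{G,V}(u_n-u)\to 0$ in one stroke, so $\|u_n-u\|_{G,V}\to 0$ is obtained as an \emph{output} of the monotonicity argument rather than as an input. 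Consequently your obstacle (b)---promoting the local complex $L^{G}$-compactness to global $L_V^{G}$-compactness---simply does not arise in the paper's route. Your plan to establish complex $L_V^{G}$-convergence first (via a tail/Vitali argument built on $|u_n|\to|u|$ in $L_V^{G}$ and a.e.\ convergence) also works, but the paper's ordering is cleaner. For the $f$-term the paper obtains the needed complex $L^{B}$-convergence $\|u_n-u\|_{B}\to 0$ from $\rho_B(u_n)\to\rho_B(u)$ and a.e.\ convergence via a Brezis--Lieb lemma in Orlicz spaces, rather than from a tail argument.

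Your obstacle (a) is exactly Lemma \ref{inequality g}; note that its proof genuinely uses $(HG)_3$ in the case $|a|<|a-b|$. Finally, the degenerate case you flag in Step~3 (a subsequence with $\rho_{s,G}^{A}(u_n)\to 0$) is handled in the paper via the optimal embedding of Theorem \ref{compact optimal}, which forces $\|u_n\|_{G}\to 0$ directly.
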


\begin{proof}
Let $\left\lbrace u_n \right\rbrace \subset W^{s, G}_{A, V}(\mathbb{R}^{N}, \mathbb{C})$ such that
$\left\lbrace \mathcal{I}(u_n)\right\rbrace$ is bounded and $\mathcal{I}'(u_n)\to 0$ as $n \to \infty$. We take $C >0$ such that
\begin{equation}\label{C}
|\mathcal{I}(u_n)| \leq C \quad \text{and }\quad |\left\langle \mathcal{I}'(u_n),u_n\right\rangle| \leq C\|u_n\|^{A}_{s, G, V}.\end{equation}First, we suppose that there is $d >0$ such that
\begin{equation}
d=\inf_{n \in \mathbb{N}}[u_n]_{s, G}^{A}.
\end{equation}Then, we have according to \eqref{C} that
\begin{equation}\label{eq 1}
\begin{split}
C+ C\|u_n\|^{A}_{s, G, V} &\geq \mathcal{I}(u_n) -\frac{1}{\mu}\left\langle \mathcal{I}'(u_n), u_n\right\rangle \quad (\text{recall }\mu >1)\\ & \geq \mathcal{M}\left(\rho_{s, G}^{A}(u_n)\right) -\dfrac{p^{+}}{\mu}M\left(\rho_{s, G}^{A}(u_n) \right)\rho_{s, G}^{A}(u_n) +\left(1-\dfrac{p^{+}}{\mu}\right)\rho_{G, V}(u_n) \quad (\text{by }\eqref{G1})\\ & -\frac{1}{\mu}\int_{\mathbb{R}^{N}}\left(\mu F(x, |u_n|)-f(x, |u_n|)|u_n|^{2}\right)\,dx \\ & \geq \frac{1}{\theta}M\left(\rho_{s, G}^{A}(u_n)\right) \rho_{s, G}^{A}(u_n) -\dfrac{p^{+}}{\mu}M\left(\rho_{s, G}^{A}(u_n) \right)\rho_{s, G}^{A}(u_n)\\ & +\left(1-\dfrac{p^{+}}{\mu}\right)\rho_{G, V}(u_n)  \qquad (\text{by } (HM)_2 \text{ and }(Hf)_3) \\ & = \left(\frac{1}{\theta}-\dfrac{p^{+}}{\mu}\right)M\left(\rho_{s, G}^{A}(u_n) \right)\rho_{s, G}^{A}(u_n) +\left(1-\dfrac{p^{+}}{\mu}\right)\rho_{G, V}(u_n) \\&\geq \delta \left(\frac{1}{\theta}-\dfrac{p^{+}}{\mu}\right)\rho_{s, G}^{A}(u_n)  +\left(1-\dfrac{p^{+}}{\mu}\right)\rho_{G, V}(u_n) \quad (\text{by }(HM)_2 \text{ with }\delta=\delta(d)). 
\end{split}
\end{equation}Next, observe that if $\|u_{n_k}\|_{s,G, V}^{A}\to \infty$ for a subsequence, then we may distinguish the following cases. We assume first that $[u_{n_k}]_{s, G}^{A}$ is unbounded and $\|u_{n_k}\|_{G, V} $ is bounded. By \eqref{eq 1}, we get

\begin{equation}\label{eqq 40}
\begin{split}
C+ C\|u_{n_k}\|^{A}_{s, G, V} &\geq\delta \left(\frac{1}{\theta}-\dfrac{p^{+}}{\mu}\right)\left([u_{n_k}]_{s, G}^{A}\right)^{p^{-}} + \left(1-\dfrac{p^{+}}{\mu}\right)\min\left\lbrace \|u_{n_k}\|^{p^{-}}_{G, V}, \|u_{n_k}\|^{p^{+}}_{G, V}\right\rbrace.
\end{split}
\end{equation}Dividing \eqref{eqq 40} by $[u_{n_k}]_{s, G}^{A}$ and letting $k \to \infty$, we get a contradiction. A similar reasoning applies when  $\|u_{n_k}\|_{G, V} $ is unbounded and $ [u_{n_k}]_{s, G}^{A}$ is  bounded.  Finally, if $\|u_{n_k}\|^{A}_{s, G, V}$ and  $\|u_{n_k}\|_{G, V} $ are simultaneously unbounded, we get
from \eqref{eqq 40} that

$$C+ C\|u_{n_k}\|^{A}_{s, G, V} \geq 2^{1-p^{-}}\min \left\lbrace \delta \left(\frac{1}{\theta}-\dfrac{p^{+}}{\mu}\right),  \left(1-\dfrac{p^{+}}{\mu}\right) \right\rbrace \left(\|u_{n_k}\|^{A}_{s, G, V}\right)^{p^{-}},$$which yields a contradiction as $k \to \infty$.

 Hence, we obtain that $\left\lbrace  u_n\right\rbrace$ is bounded in $W^{s, G}_{A, V}(\mathbb{R}^{N}, \mathbb{C})$. So, there is $u \in W^{s, G}_{A, V}(\mathbb{R}^{N}, \mathbb{C})$ so that, up to a subsequence that we do not re label,  $u_n-u \rightharpoonup 0$ in $W^{s, G}_{A, V}(\mathbb{R}^{N}, \mathbb{C})$.  By Theorem \ref{embedding continuous} and Theorems \ref{compact} and \ref{compact locally}, we have that
\begin{equation}\label{eqq 4}
|u_n| \to |u| \text{ in }L_V^{B}(\mathbb{R}^{N}) \quad \text{and }\,\,u_n \to u \text{ in }L^{G}_{loc}(\mathbb{R}^{N}, \mathbb{C}),
\end{equation}with $B$ from the assumption $(Hf)_2$. In particular, $u_n \to u$ a.e. in $\mathbb{R}^{N}$.

Next, define the  linear functionals
$$\left\langle F_1(w), v\right\rangle :=\mathcal{R}\left[ \int_{\mathbb{R}^{2N}}\dfrac{g\left( |D_s^{A}w(x, y)|\right)}{|D_s^{A}w(x, y)|}D_s^{A}w(x, y)\overline{D_s^{A}v(x, y)}\,d\mu\right]$$and
$$\left\langle F_2(w), v\right\rangle :=\mathcal{R}\left[ \int_{\mathbb{R}^{N}}\dfrac{g(|w|)}{|w|}w \overline{v}V(x)\,dx\right],$$for $w, v \in W^{s, G}_{A, V}(\mathbb{R}^{N}, \mathbb{C})$ so that  we may write
$$\left\langle \mathcal{I}'(w), v \right\rangle=M\left( \rho_{s, G}^{A}(w)\right) \left\langle F_1(w), v\right\rangle +\left\langle F_2(w), v\right\rangle -\mathcal{R}\left[\int_{\mathbb{R}^{N}}f(x, |w|)w\overline{v}\,dx \right].$$Observe that
$$\dfrac{g\left( |D_s^{A}w(x, y)|\right)}{|D_s^{A}w(x, y)|}D_s^{A}w(x, y) \in L^{\tilde{G}}_\mu(\mathbb{R}^{2N}) \text{ and }\, \overline{D_s^{A}v(x, y)} \in L^{G}_\mu(\mathbb{R}^{2N}),$$hence by Lemma \ref{Holder mu}, we get that $F_1(w)$ belongs to the dual of $W^{s, G}_{A, V}(\mathbb{R}^{N}, \mathbb{C})$. Therefore,
\begin{equation}\label{eqq 2}
\left\langle F_1(w), u-u_n\right\rangle \to 0 \text{ as }n \to \infty,
\end{equation}for any $w$. Observe that
\begin{equation}\label{eqq 6}
\begin{split}
o(1) &= \left\langle \mathcal{I}'(u_n)-\mathcal{I}'(u), u_n-u \right\rangle \\ & =M\left( \rho_{s, G}^{A}(u_n)\right)  \left\langle F_1(u_n)-F_1(u), u_n-u \right\rangle + \left\langle F_2(u_n)-F_2(u), u_n-u \right\rangle \\& + \left(M\left( \rho_{s, G}^{A}(u_n)\right) -M\left( \rho_{s, G}^{A}(u)\right)  \right)\left\langle F_1(u), u_n-u \right\rangle - \mathcal{R}\left[\int_{\mathbb{R}^{N}}(f(x, |u_n|)u_n -f(x, |u|)u)\overline{u_n-u}\,dx \right].
\end{split}
\end{equation}The term
$$\left(M\left( \rho_{s, G}^{A}(u_n)\right) -M\left( \rho_{s, G}^{A}(u)\right)  \right)\left\langle F_1(u), u_n-u \right\rangle$$converges to $0$ since $M\left( \rho_{s, G}^{A}(u_n)\right)$ remains bounded and by \eqref{eqq 2}. Next, we claim that 
\begin{equation}\label{eqq 5}
\int_{\mathbb{R}^{N}}(f(x, |u_n|)u_n -f(x, |u|)u)\overline{u_n-u}\,dx=o(1) \text{ as }n\to \infty.
\end{equation}Indeed, for $\varepsilon >0$ and by Remark \ref{remark} we have
\begin{equation}
\begin{split}
& \bigg|\int_{\mathbb{R}^{N}}(f(x, |u_n|)u_n -f(x, |u|)u)\overline{(u_n-u)}\,dx\bigg| \leq  \varepsilon\int_{\mathbb{R}^{N}}(g'(|u_n|)|u_n|+g'(|u|)|u|)|u-u_n|\,dx \\ & \qquad + C_\varepsilon\int_{\mathbb{R}^{N}}(b'(|u_n|)|u_n|+b'(|u|)|u|)|u_n-u|\,dx \\ &  \qquad \leq C\varepsilon \int_{\mathbb{R}^{N}}(\tilde{G}( g(|u_n|)+g(|u|))+G(|u_n-u|))\,dx + C_\varepsilon\int_{\mathbb{R}^{N}}(b(|u_n|)+b(|u|))|u_n-u|\,dx\\ & \qquad \leq  C\varepsilon (\rho_G(u_n)+\rho_G(u)+\rho_G(u_n-u))+ C_\varepsilon\|b(|u_n|)+b(|u|)\|_{L^{\tilde{B}}(\mathbb{R}^{N})}.\|u_n-u\|_{L^{B}(\mathbb{R}^{N})}
\end{split}
\end{equation}By Theorem \ref{continuity}, the terms $\rho_G(u_n)+\rho_G(u)+\rho_G(u_n-u)$ and $\|b(|u_n|)+b(|u|)\|_{L^{\tilde{B}}(\mathbb{R}^{N})}$ remains uniformly bounded. Moreover, we know that $\rho_B(u_n)-\rho_B(u) \to 0$ by Holder's inequality and \eqref{eqq 4}.  Hence, the Brezis-Lieb Lemma for Orlicz spaces (see \cite[Lemma 3.4]{FA}, which follows also for complex functions) implies that
$$\|u_n-u\|_{L^{B}(\mathbb{R}^{N})} \to 0,$$which yields as $n \to \infty$ that
$$\bigg|\int_{\mathbb{R}^{N}}(f(x, |u_n|)u_n -f(x, |u|)u)\overline{(u_n-u)}\,dx\bigg| = C\varepsilon + o(1).$$Sending $\varepsilon \to 0$ after taking $n \to \infty$, give \eqref{eqq 5}. 

Next, we analyse in \eqref{eqq 6} the term:
\begin{equation*}
\begin{split}
&\left\langle F_1(u_n)-F_1(u), u_n-u\right\rangle \\& = \mathcal{R}\bigg[ \int_{\mathbb{R}^{N}}\bigg( \dfrac{g(|D_s^{A}u_n(x, y)|)}{|D_s^{A}u_n(x, y)|}D_s^{A}u_n(x, y)-\dfrac{g(|D_s^{A}u(x, y)|)}{|D_s^{A}u(x, y)|}D_s^{A}u(x, y)\bigg)\cdot \overline{D_s^{A}(u_n-u)(x,y)}\,d\mu\bigg]\\ & \,\,
= \int_{\mathbb{R}^{N}}\left(\dfrac{g(|a|)}{|a|}a-\dfrac{g(|b|)}{|b|}b\right)\cdot (a-b)\,d\mu,
\end{split}\end{equation*}where
$$a=(\mathcal{R}(D_s^{A}u_n(x, y)), \,\mathcal{I}(D_s^{A}u_n(x, y))), \quad b=(\mathcal{R}(D_s^{A}u(x, y)), \,\mathcal{I}(D_s^{A}u(x, y))).$$Applying Lemma \ref{inequality g}, we derive

$$\left\langle F_1(u_n)-F_1(u), u_n-u\right\rangle \geq C\rho_G(D_s^{A}u_n-D_s^{A}u).$$Similarly,
$$\left\langle F_2(u_n)-F_2(u), u_n-u\right\rangle \geq C\rho_{G, V}(u_n-u).$$Therefore, the previous inequalities, \eqref{eqq 5} and \eqref{eqq 6} give
$$\rho_{s, G}^{A}(u_n-u)+\rho_{G, V}(u_n-u)=o(1),$$as $n \to \infty$. Hence, $u_n \to u$ strongly in $W^{s, G}_{V, A}(\mathbb{R}^{N}, \mathbb{C})$.

Now, if 
$$\inf_{n \in \mathbb{N}}[u_n]_{s, G}^{A}=0$$then we have two further cases. First, if $0$ is an isolated point of the sequence $[u_n]_{s, G}^{A}$, then we may proceed as before extracting a subsequence. Otherwise, there is a subsequence, not re-label, such that
\begin{equation}\label{eqq 7}
[u_n]_{s, G}^{A} \to 0.
\end{equation}Observe that since $W^{s, G}(\mathbb{R}^{N}) \hookrightarrow  L^{G}(\mathbb{R}^{N})$ is continuous and by the optimal embedding Theorem \ref{compact optimal}, it follows that $L^{G_{N/s}}(\mathbb{R}^{N})\hookrightarrow  L^{G}(\mathbb{R}^{N})$ is continuous and so there is a constant $C>0$ such that
$$\||u_n|\|_{G} \leq C\||u_n|\|_{G_{N/s}} \leq C[|u_n|]_{s, G}\leq C[u_n]_{s, G}^{A}   \to 0 \quad \text{ as } n\to \infty.$$Therefore, $u_n \to 0$ in $W^{s, G}_A(\mathbb{R}^{N}, \mathbb{C})$. By Theorem \ref{compact locally}, up to a subsequence, $u_n \to 0$ a.e. in $\mathbb{R}^{N}$. Moreover, by \eqref{eqq 7} and \eqref{eq 1}, $\|u_n\|_{G, V}$ is uniformly bounded. Hence, again by \eqref{eqq 7}, $u_n$ is bounded in $W^{s, G}_{A, V}$ and so $|u_n|$ is bounded in $W^{s, G}_{V}$. By Theorem \ref{compact}, $|u_n| \to 0$ in $L^{G}_V$. Consequently, $u_n$ converges to $0$ in $W^{s, G}_{A, V}(\mathbb{R}^{N}, \mathbb{C})$. This ends the proof. 
\end{proof}

In the next two lemmas we prove that the functional $\mathcal{I}$ satisfies the geometric  conditions to apply the Mountain Pass Theorem.
\begin{lemma}\label{MP 1}
The exist $r, a> 0$ such that
$$\mathcal{I}(u)\geq a$$for all $u \in W^{s, G}_{V, A}(\mathbb{R}^{N}, \mathbb{C})$ satisfying 
$$\|u\|^{A}_{s, G, V}=r.$$
\end{lemma}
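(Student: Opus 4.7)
The plan is to estimate each ingredient of $\mathcal{I}$ on a small sphere $\|u\|^A_{s,G,V} = r$, absorb the nonlinear term into the positive terms, and reduce the problem to the strict inequality $p^+\theta < p_B^-$ built into $(HM)_2$.

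First I would control the nonlinear term $\int_{\mathbb{R}^N} F(x,|u|)\,dx$. By part \textit{i)} of Remark \ref{remark}, for any $\varepsilon>0$ one has $|f(x,t)| \le \varepsilon g'(t) + C_\varepsilon b'(t)$. Writing $F(x,t) = \int_0^t s f(x,s)\,ds$, integrating by parts, and using the bounds $tg(t) \le p^+ G(t)$ and $tb(t) \le p_B^+ B(t)$ from \eqref{G1}, I obtain
\[
F(x,t) \le \varepsilon(p^+-1)G(t) + C_\varepsilon(p_B^+-1)B(t).
\]
Since $V \ge V_0 > 0$, one has $\rho_G(u) \le V_0^{-1}\rho_{G,V}(u)$; integrating and choosing $\varepsilon$ small enough so that the $\varepsilon$-term is absorbed into $\rho_{G,V}(u)$ leaves
\[
\mathcal{I}(u) \ge \mathcal{M}(\rho^A_{s,G}(u)) + \tfrac{1}{2}\rho_{G,V}(u) - C\rho_B(u).
\]
The remainder $\rho_B(u)$ is controlled via the chain of continuous embeddings $W^{s,G}_{A,V}(\mathbb{R}^N,\mathbb{C}) \hookrightarrow W^{s,G}_V(\mathbb{R}^N) \hookrightarrow W^{s,G}(\mathbb{R}^N) \hookrightarrow L^B(\mathbb{R}^N)$, provided by Theorems \ref{embedding continuous} and \ref{continuity} together with $B \ll G^*$ from $(Hf)_2$. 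Combined with Lemma \ref{comp norm modular} in the small-norm regime ($r \le 1$), this yields $\rho_B(u) \le C\,r^{p_B^-}$.

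Next I would bound the Kirchhoff term from below. Assumption $(HM)_2$ rewrites as $(\log\mathcal{M})'(t) \le \theta/t$, and integrating from $t \in (0,1]$ up to $1$ yields $\mathcal{M}(t) \ge \mathcal{M}(1)\,t^\theta$, with $\mathcal{M}(1) > 0$ guaranteed by $(HM)_1$. Using Lemma \ref{comp norm modular} again in the small-norm branch, I get for $r$ small
\[
\mathcal{M}(\rho^A_{s,G}(u)) \ge c\,([u]^A_{s,G})^{p^+\theta} \quad \text{and} \quad \rho_{G,V}(u) \ge \|u\|_{G,V}^{p^+}.
\]
Since $[u]^A_{s,G} + \|u\|_{G,V} = r$, at least one of these quantities is $\ge r/2$; splitting the two cases and using $p^+\theta > p^+$ together with $r \le 1$ shows that the sum is bounded below by $c\,r^{p^+\theta}$.

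Putting the previous estimates together produces
\[
\mathcal{I}(u) \ge c\,r^{p^+\theta} - C\,r^{p_B^-}.
\]
Because $(HM)_2$ forces $p^+\theta < p_B^-$, the right-hand side is strictly positive for all sufficiently small $r > 0$, and choosing such an $r$ together with $a = \tfrac{c}{2}\,r^{p^+\theta}$ finishes the argument. The main obstacle is the careful bookkeeping between Luxemburg norms and modulars on three different scales: Lemma \ref{comp norm modular} has to be invoked in the upper direction for $\rho_B$ and in the lower direction for both $\rho_{G,V}$ and $\rho^A_{s,G}$, while the exponents $p^-$ versus $p^+$ must be chosen consistently with the restriction $\|u\|^A_{s,G,V} \le 1$ so that the dominant power on the negative side is $p_B^-$ and on the positive side is $p^+\theta$.
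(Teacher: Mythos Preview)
Your argument is correct and follows essentially the same route as the paper: bound $F$ via Remark~\ref{remark}, absorb the $\varepsilon$-term into $\rho_{G,V}$, control $\rho_B(u)$ by $Cr^{p_B^-}$ through the embeddings, bound $\mathcal{M}(\rho^A_{s,G}(u))$ below by $c([u]^A_{s,G})^{p^+\theta}$ using $(HM)_2$, and conclude from $p^+\theta<p_B^-$. The only cosmetic difference is how you combine the two positive pieces: the paper chooses $\varepsilon$ so that the coefficient in front of $\|u\|_{G,V}^{p^+\theta}$ matches $\mathcal{M}(1)$ and then applies the elementary inequality $a^{q}+b^{q}\ge 2^{1-q}(a+b)^{q}$, whereas you use the ``at least one summand is $\ge r/2$'' case split together with $p^{+}\theta>p^{+}$; both yield the same lower bound $cr^{p^+\theta}$.
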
 
\begin{proof}
Let $u \in W^{s, G}_{V, A}(\mathbb{R}^{N}, \mathbb{C})$ such that $\|u\|^{A}_{s, G, V}=r$, with $0<r << 1$ to be chosen later. 

First, observe that Remark \ref{remark} implies
\begin{equation}\label{eqq 10}
\begin{split}
\bigg|\int_{\mathbb{R}^{N}}F(x, |u|)\,dx\bigg| &\leq \int_{\mathbb{R}^{N}}\int_0^{|u|}|f(x, s)|s\,ds\,dx \\ & \leq \int_{\mathbb{R}^{N}} \int_0^{|u|} \left(\varepsilon g'(s)  + C_\varepsilon b'(s)\right)s\,ds\,dx \\ & \leq \varepsilon(p^{+}-1) \int_{\mathbb{R}^{N}}\int_0^{|u|}g(s)\,ds\,dx + C_\varepsilon (p_B^{+}-1)\int_{\mathbb{R}^{N}}\int_0^{|u|}b(s)\,ds\,dx \\ & \leq \varepsilon (p^{+}-1)\rho_G(u)+C_\varepsilon (p_B^{+}-1)\rho_B(u) \\ & \leq \dfrac{\varepsilon (p^{+}-1)}{V_0}\rho_{G, V}(u)+C_\varepsilon\dfrac{ (p_B^{+}-1)}{V_0}\rho_{B, V}(u).
\end{split}
\end{equation}Moreover, by Theorems \ref{embedding continuous} and \ref{compact}
\begin{equation*}
\begin{split}
\rho_{B, V}(u) &\leq \max\left\lbrace \||u|\|^{p_B^{+}}_{B,V}, \||u|\|^{p_B^{-}}_{B, V}\right\rbrace \\ & \leq C\max\left\lbrace \||u|\|^{p_B^{+}}_{s, G, V}, \||u|\|^{p_B^{-}}_{s, G, V}\right\rbrace \\& \leq C\max\left\lbrace (\|u\|^{A}_{s, G, V})^{p_B^{+}},  (\|u\|^{A}_{s, G, V})^{p_B^{-}}\right\rbrace = Cr^{p_B^{-}}.
\end{split}
\end{equation*}Hence,
\begin{equation}\label{eqq 11}
\begin{split}
\mathcal{I}(u) &= \mathcal{M}\left(\rho_{s, G}^{A}(u) \right)+ \rho_{G, V}(u) -\int_{\mathbb{R}^{N}}F(x, |u|)\,dx\\ & \geq \mathcal{M}\left(( [u]_{s, G}^{A})^{p^{+}}\right) +\rho_{G, V}(u) -\dfrac{\varepsilon (p^{+}-1)}{V_0}\rho_{G, V}(u)-CC_\varepsilon\dfrac{(p_B^{+}-1)}{V_0}r^{p_B^{-}} \\ & \geq \mathcal{M}(1)([u]_{s, G}^{A})^{p^{+}\theta} + \left(1-\dfrac{\varepsilon (p^{+}-1)}{V_0}\right)\|u\|^{p^{+}\theta}_{G, V}  -CC_\varepsilon\dfrac{(p_B^{+}-1)}{V_0}r^{p_B^{-}} 
\end{split}
\end{equation}where in the last inequality we have used $(HM)_2$ and   $\theta >1$. Let $\varepsilon >0$ so that 
$$\beta:=1-\dfrac{\varepsilon(p^{+}-1)}{V_0} \leq \mathcal{M}(1),$$then \eqref{eqq 11} implies
$$\mathcal{I}(u)  \geq \beta 2^{1-p^{+}\theta}r^{p^{+}\theta} -CC_\varepsilon\dfrac{(p_B^{+}-1)}{V_0}r^{p_B^{-}}.$$Observe that the last term is positive for small $r$ since $p_B^{-}> p^{+}$ and $\theta \in (1, p_B^{-}/p^{+})$ by assumptions $(Hf)_2$ and $(HM)_2$. 
\end{proof}

\begin{lemma}\label{less a} There is $u_0 \in W^{s, G}_{A, V}(\mathbb{R}^{N}, \mathbb{C})$ such that $\|u_0\|_{s, G, V}^{A}> r$ and $\mathcal{I}(u_0) < a$.
\end{lemma}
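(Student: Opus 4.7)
The plan is to exhibit $u_0$ of the form $u_0 = t_0 \phi$ for a fixed nonzero $\phi$ with compact support and $t_0 \gg 1$, exploiting the super-$(\theta p^+)$ growth of $F$ encoded in $(Hf)_3$ against the polynomial upper bounds on the Kirchhoff and potential terms. Concretely, I would show that $\mathcal{I}(t\phi) \to -\infty$ as $t \to +\infty$, after which one selects $t_0$ so large that simultaneously $\mathcal{I}(t_0\phi) < a$ and $\|t_0 \phi\|^A_{s,G,V} > r$.

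First, I would integrate the two structural differential inequalities. From $(HM)_2$, the inequality $M(t)t \leq \theta \mathcal{M}(t)$ gives $\mathcal{M}'(t)/\mathcal{M}(t) \leq \theta/t$, which after integration from $1$ to $t \geq 1$ yields $\mathcal{M}(t) \leq \mathcal{M}(1)\, t^{\theta}$. Similarly, since $\partial_t F(x,t) = t f(x,t)$, $(Hf)_3$ rewrites as $t\,\partial_t F \geq \mu F$, and integrating from $t_0 > 0$ produces
\begin{equation*}
F(x,t) \geq \frac{F(x,t_0)}{t_0^{\mu}}\, t^{\mu} \quad \text{for all } t \geq t_0 > 0,
\end{equation*}
together with $F(x,t_0) > 0$ whenever $t_0 \neq 0$.

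Next I would fix $\phi \in C_c^\infty(\mathbb{R}^N,\mathbb{C}) \setminus \{0\}$, which lies in $W^{s,G}_{A,V}(\mathbb{R}^N,\mathbb{C})$ because $A$ is smooth and $\phi$ has compact support (so every relevant modular is finite), and select $\eta > 0$ together with a measurable set $E \subset \{|\phi| \geq \eta\}$ of positive measure on which $F(\cdot,\eta) \geq c_1 > 0$. For $t \geq 1$ sufficiently large, the modular-norm comparison \eqref{G product} (and its magnetic analogue) yields $\rho^A_{s,G}(t\phi) \leq t^{p^+}\rho^A_{s,G}(\phi)$ and $\rho_{G,V}(t\phi) \leq t^{p^+}\rho_{G,V}(\phi)$, while the polynomial lower bound on $F$ gives $\int_{\mathbb{R}^N} F(x, t|\phi|)\,dx \geq c_1 |E|\, t^{\mu}$. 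Combining,
\begin{equation*}
\mathcal{I}(t\phi) \leq \mathcal{M}(1)\bigl(\rho^A_{s,G}(\phi)\bigr)^{\theta} t^{p^+\theta} + \rho_{G,V}(\phi)\, t^{p^+} - c_1 |E|\, t^{\mu}.
\end{equation*}
Since $(Hf)_3$ encodes $\mu > \theta p^+$ (that is exactly the content of $1/\theta - p^+/\mu > 0$), the $-t^\mu$ term dominates and $\mathcal{I}(t\phi) \to -\infty$.

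The only genuinely delicate point is producing the set $E$ of positive measure on which $F(\cdot,\eta)$ is bounded below by a fixed positive constant. This is what I expect to be the main (though still routine) obstacle: by $(Hf)_3$, $F(\cdot, \eta)$ is a.e.\ strictly positive and Carathéodory, so inner regularity supplies such a set inside the support of $\phi$; one can, if desired, replace this step by choosing $\phi$ supported in a small ball where continuity of $F(\cdot, \eta)$ in $x$ (granted by the Carathéodory hypothesis after a reduction) forces a uniform lower bound. Once this lower bound is secured, the rest of the argument is a bookkeeping of exponents made possible by the precise compatibility $\mu > \theta p^+$ built into $(Hf)_3$--$(HM)_2$.
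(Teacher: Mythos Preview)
Your proposal is correct and follows the same overall strategy as the paper: fix a nonzero compactly supported $\phi$, show $\mathcal{I}(t\phi)\to-\infty$ via the competition between the $t^{\theta p^+}$ upper bound on the Kirchhoff/potential terms and the $t^\mu$ lower bound on $\int F$, then pick $t_0$ large.

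The one genuine difference is how you obtain the lower bound on $\int_{\mathbb{R}^N} F(x,t|\phi|)\,dx$. The paper derives a \emph{pointwise} inequality $F(x,t)\ge C|t|^\mu - C(G(t)+t^2)$ valid for all $t>0$, which requires separately controlling $F$ for small $t$ via $(Hf)_1$ and $(Hf)_2$ (this is where the extra $G(t)+t^2$ correction comes from). You instead exploit the strict positivity $F(x,t)>0$ built into $(Hf)_3$ to discard everything outside a set $E\subset\{|\phi|\ge\eta\}$ of positive measure and use only the $t^\mu$ growth there. Your route is slightly more elementary, since it dispenses with the small-$t$ analysis and with hypotheses $(Hf)_1$--$(Hf)_2$ altogether for this lemma; the paper's route yields a global pointwise estimate that is reused later (e.g., in the multiplicity argument). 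Both are perfectly valid.

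One small slip: in your parenthetical alternative you invoke ``continuity of $F(\cdot,\eta)$ in $x$ granted by the Carath\'eodory hypothesis,'' but Carath\'eodory means measurable in $x$ and continuous in $t$, not the reverse. This does not matter, since your primary argument (a.e.\ positivity of the measurable function $F(\cdot,\eta)$ plus the exhaustion $E_n=\{F(\cdot,\eta)\ge 1/n\}$) already produces the desired set $E$ without any continuity in $x$.
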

\begin{proof}

First, observe that from $(Hf)_3$ we get that $\gamma(t)=F(x, t)t^{-\mu}$ is increasing and so $\gamma(t) \geq \gamma(1)$ for $t \geq 1$, which yields
\begin{equation}\label{eqq 12}
F(x, |t|) \geq C|t|^{\mu}.
\end{equation}Moreover, since $f(x, t) = o(g'(t))$, we have $|f(x, t)t| \leq g'(t)t$ for $t \in [0, \delta]$, for some $\delta  \in (0, 1)$. Also, by $(Hf)_2$, there is $C>0$ such that
$$|f(x, t)| \leq 2C, \quad \text{ for all }x \in \mathbb{R}^{N}, \,\, t \in [\delta, 1].$$As a result,
$$f(x, t)t \geq -\left(g'(t)+ 2C \right)t \quad \text{ for all } t\in [0, 1]$$which gives
\begin{equation}\label{eqq 133}
F(x, t)= \int_0^{t}f(x, s)s\,ds \geq -\int_0^{t}g'(s)s\,ds -Ct^{2} \geq -(p^{+}-1)G(t)-Ct^{2}.
\end{equation}Combining \eqref{eqq 12} and \eqref{eqq 133}, we obtain for $t>0$,
\begin{equation}\label{F ineq}
F(x, t) \geq C|t|^{\mu} -C(G(t)+ t^{2}).
\end{equation}Let $u \in C_c^{\infty}(\mathbb{R}^{N}, \mathbb{C})$ with $\|u\|_{s, G, V}^{A} > 1$ and $[u]_{s, G}^{A}=1$. Then for large $t$,
\begin{equation*}
\begin{split}
\mathcal{I}(tu) &= \mathcal{M}\left(\rho_{s, G}^{A}(tu)\right) + \rho_{G, V}(tu) -\int_{\mathbb{R}^{N}}F(x, t|u|)\,dx \\ & \leq \mathcal{M}(1)\left[\rho_{s, G}^{A}(tu) \right]^{\theta}  + \rho_{G, V}(tu)-Ct^{\mu}\|u\|^{\mu}_{L^{\mu}(\mathbb{R}^{N})} + \dfrac{C}{V_0}\rho_{G, V}(tu) + Ct^{2}\|u\|^{2}_{L^{2}(\mathbb{R}^{N})} \\ & \leq  \mathcal{M}(1)t^{p^{+}\theta} + t^{p^{+}}\|u\|_{G, V}^{p^{+}} -Ct^{\mu}\|u\|^{\mu}_{\mu} + Ct^{2}\|u\|^{2}_{2} \to -\infty,
\end{split}
\end{equation*}as $t\to \infty$ since $\mu > p^{+}\theta >2$.
\end{proof}

\subsection*{Proof of Theorem \ref{existence}}

The existence of weak solutions to \eqref{main eq} follows directly from the previous lemmas together with the Mountain Pass Theorem \cite{AR}: 

\begin{theorem}Let $\mathcal{B}$ be a real Banach space. Suppose that $\mathcal{I} \in C^{1}(\mathcal{B})$ satisfies the PS-condition and that:
\begin{itemize}
\item[(i)] $\mathcal{I}(0)=0$;
\item[(ii)] There exist $r, a >0$ such that $\mathcal{I}(u) \geq a$ for all $\|u\|_{\mathcal{B}}=r$;
\item[(iii)] There is $u_0 \in \mathcal{B}$ such that $\|u_0\|_{\mathcal{B}} > r$ and $\mathcal{I}(u_0) < a$.
\end{itemize}Then, $\mathcal{I}$ possesses a critical value $c \geq a$ that can be characterized as
$$c = \inf_{g \in \Gamma}\max_{u \in g([0, 1])}\mathcal{I}(u),$$where
$$\Gamma := \left\lbrace g \in C([0, 1], \mathcal{B}): g(0)=0, \,g(1)=u_0 \right\rbrace.$$
\end{theorem}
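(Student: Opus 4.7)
The plan is to apply the classical Ambrosetti-Rabinowitz minimax scheme: verify that the candidate minimax value $c$ is bounded below by $a$, and then argue by contradiction against the assumption that $c$ is not a critical value by using the Palais-Smale condition to construct a continuous deformation that pushes any near-optimal path strictly below the level $c$.

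\textbf{Step 1 (lower bound).} First, $\Gamma$ is non-empty because $g(t):=tu_{0}$ lies in $\Gamma$. For any $g \in \Gamma$, the function $t \mapsto \|g(t)\|_{\mathcal{B}}$ is continuous on $[0,1]$ with values $0$ at $t=0$ and $\|u_{0}\|_{\mathcal{B}}>r$ at $t=1$, so the intermediate value theorem yields $t^{*}\in(0,1)$ with $\|g(t^{*})\|_{\mathcal{B}}=r$. Assumption (ii) then gives $\mathcal{I}(g(t^{*}))\geq a$, hence $\max_{t\in[0,1]}\mathcal{I}(g(t))\geq a$. Taking the infimum over $g \in \Gamma$ gives $c \geq a > 0$.

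\textbf{Step 2 (deformation and contradiction).} Suppose, for contradiction, that $c$ is not a critical value of $\mathcal{I}$. I would first use the PS-condition to produce constants $\bar\varepsilon,\delta>0$ such that $\|\mathcal{I}'(u)\|_{\mathcal{B}^{*}}\geq \delta$ for every $u \in \mathcal{I}^{-1}([c-\bar\varepsilon,c+\bar\varepsilon])$: otherwise one could extract a sequence $u_n$ with $\mathcal{I}(u_n)\to c$ and $\mathcal{I}'(u_n)\to 0$, a PS sequence at level $c$, which by the PS-condition would converge (up to a subsequence) to a critical point at level $c$, contradicting the assumption. After possibly shrinking $\bar\varepsilon$ we may also assume $\bar\varepsilon<\min\{c,\,c-\mathcal{I}(u_{0})\}$, which is positive since $\mathcal{I}(u_{0})<a\leq c$. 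Invoking the standard Quantitative Deformation Lemma (built from a pseudo-gradient vector field for $\mathcal{I}$, truncated near the strip $\mathcal{I}^{-1}([c-\bar\varepsilon,c+\bar\varepsilon])$, and integrated for a short time), there exist $\varepsilon\in(0,\bar\varepsilon)$ and a continuous map $\eta:\mathcal{B}\to\mathcal{B}$ such that $\eta(u)=u$ whenever $|\mathcal{I}(u)-c|\geq \bar\varepsilon$, and $\mathcal{I}(\eta(u))\leq c-\varepsilon$ whenever $\mathcal{I}(u)\leq c+\varepsilon$. In particular the endpoints are fixed, $\eta(0)=0$ and $\eta(u_{0})=u_{0}$. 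By the definition of $c$ as an infimum, pick $g \in \Gamma$ with $\max_{t}\mathcal{I}(g(t))\leq c+\varepsilon$ and set $\tilde g(t):=\eta(g(t))$; then $\tilde g \in \Gamma$ (since continuity is preserved and endpoints are fixed) yet $\max_{t}\mathcal{I}(\tilde g(t))\leq c-\varepsilon<c$, contradicting the definition of $c$. Hence $c$ is critical.

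\textbf{Main obstacle.} The substantive ingredient is the Quantitative Deformation Lemma used in Step 2. Its proof requires constructing a locally Lipschitz pseudo-gradient vector field $V$ on the open set $\{u:\mathcal{I}'(u)\neq 0\}$, a paracompact Banach manifold, by means of a locally finite partition of unity; multiplying $V$ by a Urysohn-type cutoff supported near $\mathcal{I}^{-1}([c-\bar\varepsilon,c+\bar\varepsilon])$ and integrating the resulting ODE for a finite time calibrated to the uniform lower bound $\delta$ produces $\eta$. The PS-condition is the key analytic input that secures $\delta>0$ and thereby guarantees that trajectories exit the critical strip within the allotted time; the remaining partition-of-unity and flow arguments are geometric/topological and standard, so I would cite them rather than redo them.
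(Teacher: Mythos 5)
Your argument is correct: Step 1 (intermediate value theorem giving $c\geq a$) and Step 2 (uniform lower bound on $\|\mathcal{I}'\|$ in the strip via the PS-condition, then the quantitative deformation lemma applied to a near-optimal path with endpoints fixed) is precisely the classical Ambrosetti--Rabinowitz minimax proof of the Mountain Pass Theorem. Note, however, that the paper does not prove this statement at all: it is quoted as a known tool and cited to Ambrosetti--Rabinowitz \cite{AR}, so there is no internal proof to compare against; your sketch supplies the standard argument, with the only substantive ingredient (the pseudo-gradient construction and deformation flow) correctly identified and legitimately cited rather than reproved.
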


Let $u_0 \in W^{s, G}_{A, V}(\mathbb{R}^{N}, \mathbb{C})$ such that $\mathcal{I}(u_0)=c >0$ and $\mathcal{I}'(u_0)=0$. Then $u_0 \neq 0$ and moreover, since $\mathcal{I}$ is even,  $-u_0$ is also a critical point. Therefore, we have shown the existence of at least two non-trivial solutions to \eqref{main eq}.

\section{Existence of ground-state solutions: proof of Theorem \ref{groud state}}\label{SEC 3}

Throughout the section we assume  $(HG)_1-(HG)_2$, $(HM)_1-(HM)_3$ and $(Hf)_1-(Hf)_3$.

\subsection*{Proof of Theorem \ref{groud state}} Define the set of non-trivial critical points
$$\mathcal{C}:=\left\lbrace u \in W^{s, G}_{A, V}(\mathbb{R}^{N}, \mathbb{C})\setminus\left\lbrace 0 \right\rbrace: \mathcal{I}'(u)=0 \right\rbrace.$$Then, by Theorem \ref{existence}, $\mathcal{C} \neq \emptyset$. Let
$$c_m := \inf\left\lbrace \mathcal{I}(u): u \in \mathcal{C} \right\rbrace.$$We first prove that $c_m \geq 0$. Indeed, let $u \in \mathcal{C}$. Since $\mathcal{I}'(u)=0$, we have
\begin{equation}
\begin{split}
\int_{\mathbb{R}^{N}}F(x, |u|)\,dx & \leq \frac{1}{\mu}\int_{\mathbb{R}^{N}}f(x, |u|)|u|^{2}\,dx \\ & \leq \dfrac{p^{+}}{\mu} \left(M(\rho_{s, G}^{A}(u))\rho_{s, G}^{A}(u)+\rho_{G, V}(u) \right). 
\end{split}
\end{equation}Hence,
\begin{equation}\label{eqq 13}
\begin{split}\mathcal{I}(u)& \geq \mathcal{M}(\rho_{s, G}^{A}(u)) + \rho_{G, V}(u) -\dfrac{p^{+}}{\mu} \left(M(\rho_{s, G}^{A}(u))\rho_{s, G}^{A}(u)+\rho_{G, V}(u) \right)\\ & \geq \left(\dfrac{1}{\theta}-\dfrac{p^{+}}{\mu}\right)M(\rho_{s, G}^{A}(u)) \rho_{s, G}^{A}(u) + \left(1- \dfrac{p^{+}}{\mu}\right) \rho_{G, V}(u) \qquad (\text{by }(HM)_2 \text{ and }(Hf)_3)\\& \geq 0.
\end{split}
\end{equation}Thus,  $c_m \geq 0$. 

Moreover,  observe that $c_m$ is attainable. In fact, let $u_n$ be a minimizing sequence in $\mathcal{C}$. Then, $\mathcal{I}'(u_n) = 0$ and $0 \leq  c_m \leq \mathcal{I}(u_n) \leq C$ for all $n$, so  $u_n$ is a PS-sequence. By Lemma \ref{ps condicion}, there is a subsequence of $u_n$, not re-label, and $u \in W^{s, G}_{A, V}(\mathbb{R}^{N}, \mathbb{C})$ such that
$$u_n \to u \quad \text{ in }W^{s, G}_{A, V}(\mathbb{R}^{N}, \mathbb{C}).$$Hence, $\mathcal{I}'(u)=0$ and
$$c_m=\mathcal{I}(u).$$

Next, we prove that $c_m >0$. To get a contradiction, assume that $c_m=0$. Then,  there is a sequence $u_n \in W^{s, G}_{A, V}(\mathbb{R}^{N}, \mathbb{C})$ such that
\begin{equation}
\mathcal{I}'(u_n)=0 \quad \text{ and }\quad \mathcal{I}(u_n) \to 0.
\end{equation}By \eqref{eqq 13}, we get
\begin{equation}\label{eqq 16}
u_n \to 0\quad \text{in }W^{s, G}_{A, V}(\mathbb{R}^{N}, \mathbb{C}).
\end{equation}
Moreover, $\mathcal{I}'(u_n)=0$ implies
\begin{equation}
\begin{split}
&M(\rho_{s, G}^{A}(u_n))\rho_{s, G}^{A}(u_n)+\rho_{G, V}(u_n) \leq \int_{\mathbb{R}^{N}}f(x, |u_n|)|u_n|^{2}\,dx \\ & \qquad \leq \int_{\mathbb{R}^{N}}\left[\varepsilon g'(|u_n|)+C_\varepsilon b'(|u_n|) \right]|u_n|^{2}\,dx\\ & \qquad \leq \dfrac{\varepsilon(p^{+}-1)p^{+}}{V_0}\rho_{G, V}(u_n) + C_\varepsilon\dfrac{(p_B^{+}-1)p_B^{+} }{V_0}\rho_B(u_n)\\ & \qquad \leq \dfrac{\varepsilon(p^{+}-1)p^{+}}{V_0}\rho_{G, V}(u_n) +C.C_\varepsilon(\|u_n\|_{s, G, V}^{A})^{p_B^{-}}.
\end{split}
\end{equation}Consequently,
\begin{equation}\label{eqq 14}
M(\rho_{s, G}^{A}(u_n))\rho_{s, G}^{A}(u_n)+\left(1- \dfrac{\varepsilon(p^{+}-1)p^{+}}{V_0}\right)\rho_{G, V}(u_n) \leq C.C_\varepsilon(\|u_n\|_{s, G, V}^{A})^{p_B^{-}}.
\end{equation}Since $M(t)\geq c_0t^{\theta-1}$ for $t \in [0, 1]$ by $(HM)_3$, we derive from \eqref{eqq 14} that
\begin{equation}\label{eqq 15}
c_0\left([u_n]_{s, G}^{A}\right)^{p^{+}\theta}+\left(1- \dfrac{\varepsilon(p^{+}-1)p^{+}}{V_0}\right)\|u_n\|^{p^{+}}_{G, V} \leq C.C_\varepsilon(\|u_n\|_{s, G, V}^{A})^{p_B^{-}}.
\end{equation}Also, since $\theta p^{+} > p^{+}$ and  $\|u_n\|^{p^{+}}_{G, V} \leq 1$, it follows from \eqref{eqq 15} that
\begin{equation}\label{eqq 16}
c_0\left([u_n]_{s, G}^{A}\right)^{p^{+}\theta}+\left(1- \dfrac{\varepsilon(p^{+}-1)p^{+}}{V_0}\right)\|u_n\|^{p^{+}\theta}_{G, V} \leq C.C_\varepsilon(\|u_n\|_{s, G, V}^{A})^{p_B^{-}}.
\end{equation}Choose now $\varepsilon$ small so that
$$c_0 = 1- \dfrac{\varepsilon(p^{+}-1)p^{+}}{V_0},$$then by \eqref{eqq 16} we have

$$c_0 2^{1-\theta p^{+}}(\|u_n\|_{s, G, V}^{A})^{p^{+}\theta} \leq C.C_\varepsilon (\|u_n\|_{s, G, V}^{A})^{p_B^{-}},$$which is a contradiction since $p^{+}\theta  < p_B^{-}$ by $(HM)_2$ and \eqref{eqq 16} holds. As a result, we have shown that there is a non-trivial ground state solution $u_m$ such that
$$\mathcal{I}(u_m)=c_m >0.$$

\begin{remark}Observe that Theorem \ref{groud state} states that $u=0$ is an isolated critical point of $\mathcal{I}$. 
\end{remark}

\section{Existence of infinitely-many weak solutions: proof of Theorem \ref{multiplicity}}\label{SEC 4}

We will prove Theorem \ref{multiplicity} by appealing to the following result on multiplicity of critical points for $C^{1}$-functionals (see for instance \cite[Theorem 9.12]{AR}).
\begin{theorem}
Suppose that $\mathcal{B}$ is an infinite-dimensional Banach space and suppose that $\mathcal{I}\in C^{1}(\mathcal{B})$ satisfies the PS-condition, is even and $\mathcal{I}(0)=0$. Assume further that $\mathcal{B}=\mathcal{B}^{+}\oplus \mathcal{B}^{-}$, where $\mathcal{B}^{-}$ is finite dimensional, and that the following conditions hold
 \begin{itemize}
 \item[(i)] There exist $a, r >0$ such that for all $u \in \mathcal{B}^{+}$ with $\|u\|=r$, there holds $\mathcal{I}(u) \geq a$.
 \item[(ii)] For any finite dimensional subspace $W \subset \mathcal{B}$, there is $R = R(W)>0$ such that $\mathcal{I}(u) \leq 0$ for all $u \in W$ with $\|u\| \geq R$.
 \end{itemize}Then, $\mathcal{I}$ possesses an unbounded sequence of critical points.
\end{theorem}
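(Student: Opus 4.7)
The plan is to prove this abstract multiplicity theorem via a minimax scheme built on the Krasnoselskii genus, following the classical strategy of Ambrosetti--Rabinowitz. Write $k_{0}:=\dim \mathcal{B}^{-}$. For each integer $k>k_{0}$, fix a $k$-dimensional subspace $V_{k}\supset \mathcal{B}^{-}$, let $R_{k}=R(V_{k})$ be provided by (ii) (taken $\geq r$), and define
\[
\Gamma_{k}:=\bigl\{h\in C(\overline{B_{R_k}}\cap V_k,\mathcal{B}):h\text{ odd},\ h|_{\partial B_{R_k}\cap V_k}=\mathrm{id}\bigr\},\qquad c_{k}:=\inf_{h\in \Gamma_{k}}\max_{u\in \overline{B_{R_k}}\cap V_k}\mathcal{I}(h(u)).
\]
Since $\mathrm{id}\in\Gamma_{k}$ and $\mathcal{I}$ is continuous on a compact set in the finite-dimensional $V_{k}$, each $c_{k}$ is finite.

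First I would show $c_{k}\geq a$ for every $k>k_{0}$. With $P^{\pm}$ denoting the projections onto $\mathcal{B}^{\pm}$, consider for any $h\in \Gamma_{k}$ the odd continuous map
\[
\Phi(u):=\bigl(P^{-}h(u),\ \|P^{+}h(u)\|-r\bigr)\in \mathcal{B}^{-}\times \mathbb{R},
\]
defined on the $k$-dimensional ball $\overline{B_{R_k}}\cap V_k$. Since the target has dimension $k_{0}+1\leq k$ and $\Phi$ does not vanish on the boundary (where $h=\mathrm{id}$ and $R_{k}\geq r$), the Borsuk--Ulam theorem yields an interior zero: a point $u_{\ast}$ with $h(u_{\ast})\in S_{r}\cap \mathcal{B}^{+}$. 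Hypothesis (i) then forces $\mathcal{I}(h(u_{\ast}))\geq a$, hence $c_{k}\geq a$.

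Next I would show every $c_{k}$ is a critical value. If not, PS combined with the evenness of $\mathcal{I}$ permits the equivariant deformation lemma, producing an odd homeomorphism $\eta$ with $\eta(\{\mathcal{I}\leq c_{k}+\varepsilon\})\subset \{\mathcal{I}\leq c_{k}-\varepsilon\}$ and $\eta=\mathrm{id}$ outside a small neighborhood of the associated level strip. For $h\in \Gamma_{k}$ nearly realizing $c_{k}$, the composition $\eta\circ h$ remains odd; on $\partial B_{R_k}\cap V_k$ one has $\mathcal{I}(h(u))=\mathcal{I}(u)\leq 0<c_{k}-\varepsilon$ by (ii), so $\eta=\mathrm{id}$ there and the boundary identity is preserved. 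Hence $\eta\circ h\in \Gamma_{k}$, contradicting the infimum definition of $c_{k}$.

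The main obstacle is proving $c_{k}\to \infty$. I would argue by contradiction: if $c_{k}\leq c^{\ast}$ for all $k$, the set $K^{\ast}:=\{u:\mathcal{I}'(u)=0,\ a\leq \mathcal{I}(u)\leq c^{\ast}\}$ is compact by PS, symmetric by evenness, and avoids $0$ (since $\mathcal{I}\geq a$ there), hence has finite Krasnoselskii genus $m=\gamma(K^{\ast})$. Pick a symmetric neighborhood $N\supset K^{\ast}$ with $\gamma(\overline{N})=m$, and apply the equivariant deformation lemma at level $c^{\ast}$ with exceptional set $N$. The crucial technical input is a genus-strengthening of the linking in Step 2: for $k>k_{0}+m+1$ and any $h\in \Gamma_{k}$, the set $h(\overline{B_{R_k}}\cap V_k)\setminus N$ still meets $S_{r}\cap \mathcal{B}^{+}$, obtained by applying Borsuk--Ulam after excising $N$ (the dimension count $k-k_{0}-m\geq 1$ is what makes the excision topologically harmless). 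Combined with the deformation, this furnishes $\tilde h\in \Gamma_{k}$ with $\max \mathcal{I}\circ \tilde h\leq c^{\ast}-\varepsilon$, contradicting $c_{k}\leq c^{\ast}$. Setting up the genus-comparison cleanly (including the existence of $N$ with $\gamma(\overline{N})=m$ and the extension of the excision argument to the non-convex set $h(\overline{B_{R_k}}\cap V_k)\setminus N$) is the main technical hurdle.
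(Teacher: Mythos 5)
First, a point of comparison: the paper does not prove this statement at all — it is quoted as a known result (Rabinowitz's symmetric mountain pass theorem, cited there as \cite[Theorem 9.12]{AR}) and then applied. So your sketch must stand on its own, and while it follows the correct classical genus/minimax strategy, it has two genuine problems.

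The first is a step that fails as written. Your map $\Phi(u)=\bigl(P^{-}h(u),\ \|P^{+}h(u)\|-r\bigr)$ is \emph{not} odd: the first component is odd because $h$ and $P^{-}$ are, but the second component is even, since $\|P^{+}h(-u)\|=\|-P^{+}h(u)\|=\|P^{+}h(u)\|$. Hence neither Borsuk--Ulam nor Borsuk's degree theorem applies to $\Phi$, and the claimed "interior zero" is not justified. The conclusion you want (the intersection of $h(\overline{B_{R_k}}\cap V_k)$ with $S_r\cap\mathcal{B}^{+}$) is true, but the standard proof runs differently: one considers the symmetric open set $\mathcal{O}=\{u\in B_{R_k}\cap V_k:\ \|h(u)\|<r\}$, notes that $\overline{\mathcal{O}}$ avoids $\partial B_{R_k}$ because $h=\mathrm{id}$ there and $R_k>r$, takes the (symmetric) component $\mathcal{O}_0$ containing $0$, and applies Borsuk--Ulam to the genuinely odd map $P^{-}h$ on $\partial\mathcal{O}_0$, where $\|h(u)\|=r$; the zero obtained lies in $\mathcal{B}^{+}\cap S_r$. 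So the linking step needs to be redone along these lines, not patched.

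The second and more serious issue is that the heart of the theorem — that the critical values are \emph{unbounded} — is exactly the part you leave as "the main technical hurdle." With your classes $\Gamma_k$ (odd maps on a fixed $V_k$, identity on the boundary) you do not even get monotonicity of $c_k$, and the excision argument you gesture at (removing a neighborhood $N$ of the critical set with $\gamma(\overline N)=m$ and still linking) is precisely what forces the classical proof to define the minimax classes differently: Rabinowitz works with symmetric compact sets of the form $h(\overline{D_m}\setminus Y)$ with $\gamma(Y)\le m-j$, so that removal of small-genus sets is built into the class, the intersection property survives excision, and the boundedness of $(c_k)$ can be contradicted via the deformation lemma. Without carrying out this construction (or an equivalent one), your argument establishes at best that each $c_k\ge a$ is a critical value, which does not yield an unbounded sequence of critical points. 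As it stands, the proposal is an outline of the known proof with its two decisive ingredients — the corrected Borsuk--Ulam linking and the genus-compatible minimax classes giving $c_k\to\infty$ — missing or incorrect.
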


\begin{proof}[Proof of Theorem \ref{multiplicity}]We already know that $\mathcal{I}$ satisfies the PS-condition, is even and $\mathcal{I}(0)=0$. We put $\mathcal{B}=W^{s, G}_{A, V}(\mathbb{R}^{N}, \mathbb{C})$ and we let $\mathcal{B}^{+}=\mathcal{B}$ and $\mathcal{B}^{-}=\left\lbrace 0\right\rbrace$. Observe that $(i)$ is satisfied by Lemma \ref{MP 1}.  Hence, it is remains to prove $(ii)$. 

Let $W \subset \mathcal{B}$ be a finite dimensional subspace. Put
$$W=\text{span}\left\lbrace u_1, ..., u_k\right\rbrace.$$In $W$ we define the norm
$$\|u\|= \max\left\lbrace |\alpha_i|: u = \sum_{i=1}^{k}\alpha_iu_i\right\rbrace.$$Since $W$ is finite dimensional, this norm is equivalent to the given norm in $\mathcal{B}$. Assume that $\|u\|=R$, $R>1$ to be chosen later, and that $|\alpha_{i_0}|= \|u\|$ for some $1 \leq i_{i_0}\leq k$. Then 
\begin{equation*}
\begin{split}
\mathcal{I}(u)&\leq \mathcal{M}\left( C_1\sum_{i=1}^{k}\max\left\lbrace |\alpha_i|^{p^{+}},  |\alpha_i|^{p^{-}} \right\rbrace \rho_{s, G}^{A}(u_i)\right) +C_2\sum_{i=1}^{k}\max\left\lbrace |\alpha_i|^{p^{+}}, 
|\alpha_i|^{p^{-}}\right\rbrace \rho_G(u_i) \quad (\text{by }\eqref{G product})\\ & -C_3\bigg(\sum_{i=1}^{k} |\alpha_i|^{\mu}\|u_i\|_{\mu}^{\mu} - \sum_{i=1}^{k} |\alpha_i|^{2}\|u_i\|_{2}^{2}\bigg) \quad (\text{by \eqref{F ineq} and observing }(a+b)^{q} \geq a^{q}+b^{q},\, q \geq 1), \\ & \leq C\left(R^{\theta p^{+}}\left(\sum_{i=1}^{k}\rho_{s, G}^{A}(u_i)\right)^{\theta} +R^{p^{+}}\sum_{i=1}^{k}\rho_G(u_i) + R^{2} \sum_{i=1}^{k}\|u_i\|_{2}^{2}\right) -C_3R^{\mu}\|u_{i_0}\|_{\mu}^{\mu} \leq 0,
\end{split}
\end{equation*}for $R$ large enough, since by assumption $\mu > p^{+}\theta > 2$. Therefore, the conclusion follows.
\end{proof}

\section{Existence of solutions with a weak  Ambrosetti-Rabinowitz condition: proof of Theorem \ref{existence2}}\label{SEC 6}

Throughout this section, we replace hypothesis $(Hf)_3$ with the following weak-type Ambrosetti-Rabinowitz condition:

$(Hf)'_3$ There exists a non negative function $h: \mathbb{R}^{N}\times \mathbb{R}\to \mathbb{R}$ such that
$$\mu F(x,t) -f(x,t)t^{2} \leq h(x, t) \quad \text{ for all }(x, t)\in \mathbb{R}^{N}\times [0, \infty).$$Here, $h$ satisfies the following growth assumption:  
$$|h(x, |t|)| \leq \tilde{B}(|t|) + \phi(x) \quad \text{ for all $x$ and $t$,}$$where $\phi \in L^{1}(\Omega)$, $\phi \geq 0$, and  $\tilde{B}$ is an N-function satisfying \eqref{cG0} and
$$p_{\tilde{B}}^{+} < p^{-}.$$Observe that $W^{s, G}(\mathbb{R}^{N}) \hookrightarrow L^{\tilde{B}}(\mathbb{R}^{N})$ continuously.  Moreover, we assume that
\begin{equation}\label{F lim}
\lim_{|t|\to \infty}\dfrac{F(x, |t|)}{|t|^{p^{+}\theta}}=\infty.
\end{equation}

In order to prove Theorem \ref{existence2}, we first show how to get that $\mathcal{I}$ satisfies the PS-condition. Observe that assumption $(Hf)_3$ was used in the proof of Lemma \ref{ps condicion} in the chain of inequalities \eqref{eq 1}. First, we have
\begin{equation}
\begin{split}
\int_{\mathbb{R}^{N}}h(x, |u_n|)\,dx &\leq \int_{\mathbb{R}^{N}}\tilde{B}(|u_n|)\,dx + \|\phi\|_{1} \\ & = \rho_{\tilde{B}}(u_n) + \|\phi\|_{1}  \\ & \leq \max\left\lbrace \||u_n|\|^{p^{+}_{\tilde{B}}}_{\tilde{B} },  \||u_n|\|^{p^{-}_{\tilde{B}}}_{\tilde{B} }\right\rbrace + \|\phi\|_{1} \\ & \leq C\max\left\lbrace \||u_n|\|^{p^{+}_{\tilde{B}}}_{s, G},  \||u_n|\|^{p^{-}_{\tilde{B}}}_{s, G}\right\rbrace + \|\phi\|_{1} \\ & \leq C\max\left\lbrace (\|u_n\|_{s, G, V}^{A})^{p^{+}_{\tilde{B}}}, (\|u_n\|_{s, G, V}^{A})^{p^{-}_{\tilde{B}}}\right\rbrace + \|\phi\|_{1}.
\end{split}
\end{equation}Hence, under  $(Hf)'_3$, \eqref{eq 1} becomes
\begin{equation}\label{eq 55}
\begin{split}
& C+ C\|u_n\|_{s, G, V}^{A}\geq \delta \left(\frac{1}{\theta}-\frac{p^{+}}{\mu} \right)\rho_{s, G}^{A}(u_n) + \left(1- \frac{p^{+}}{\mu} \right)\rho_{G, V}(u_n)-\frac{1}{\mu}\int_{\mathbb{R}^{N}}h(x, |u_n|)\,dx \\ & \quad \geq  \delta \left(\frac{1}{\theta}-\frac{p^{+}}{\mu} \right)\rho_{s, G}^{A}(u_n) + \left(1- \frac{p^{+}}{\mu} \right)\rho_{G, V}(u_n)\\ & \qquad \quad -C\max\left\lbrace  (\|u_n\|_{s, G, V}^{A})^{p^{+}_{\tilde{B}}}, (\|u_n\|_{s, G, V}^{A})^{p^{-}_{\tilde{B}}}\right\rbrace - \|\phi\|_{1}.
\end{split}
\end{equation}Assume that (a subsequence of) $\|u_n\|_{s, G, V}^{A} $ tends to $\infty$, then \eqref{eq 55} gives
\begin{equation}\label{eq 56}
\begin{split}
&  C(1+  \|u_n\|_{s, G, V}^{A}+  (\|u_n\|_{s, G, V}^{A})^{p^{+}_{\tilde{B}}})\geq \delta \left(\frac{1}{\theta}-\frac{p^{+}}{\mu} \right)\rho_{s, G}^{A}(u_n) + \left(1- \frac{p^{+}}{\mu} \right)\rho_{G, V}(u_n)\\ & \quad \geq C_1\min\left\lbrace([u_n]^{A}_{s, G})^{p^{+}},([u_n]^{A}_{s, G})^{p^{-}}\right\rbrace +C_2\min\left\lbrace \|u_n\|_{G, V}^{p^{+}}, \|u_n\|_{G,V}^{p^{-}} \right\rbrace.
 \end{split}
\end{equation}If $[u_n]^{A}_{s, G} \to \infty$ and $\|u_n\|_{G, V}$ is bounded, dividing both sides of \eqref{eq 56} by  $[u_n]^{A}_{s, G}$ and letting $n \to \infty$ give a contradiction since $p^{+}_{\tilde{B}} < p^{-}$. The argument is similar if $[u_n]^{A}_{s, G}$ is bounded and $\|u_n\|_{G, V}$ is unbounded.  Finally, if both $[u_n]^{A}_{s, G} , \|u_n\|_{G, V} \to \infty$, then \eqref{eq 56} yields
$$ C(1+ \|u_n\|_{s, G, V}^{A} +  (\|u_n\|_{s, G, V}^{A})^{p^{+}_{\tilde{B}}}) \geq C_1([u_n]^{A}_{s, G})^{p^{-}} + C_2\|u_n\|_{G, V}^{p^{-}} \geq C_3(\|u_n\|_{s, G, V}^{A})^{p^{-}},$$and again we get a contradiction since $p^{-}-p^{+}_{\tilde{B}} >0$. 

Finally, assumption \eqref{F lim} is used in the proof of Lemma \ref{less a}. Indeed,  \eqref{eqq 12} becomes now
$$F(x, |t|)\geq C|t|^{p^{+}\theta}$$ for all $t \geq t_0$ and for $C >\mathcal{M}(1)$. Hence, \eqref{F ineq} is now
$$F(x, t) \geq C|t|^{p^{+}\theta} -C(G(t)+t^{2}), \quad t >0.$$Finally,  for $u \in C_c^{\infty}(\mathbb{R}^{N}, \mathbb{C})$ with $\|u\|_{s, G, V}^{A} > 1$ and $[u]_{s, G}^{A}=1$, and for large $t$,
\begin{equation*}
\begin{split}
\mathcal{I}(tu) &\leq \mathcal{M}(1)\left[\rho_{s, G}^{A}(tu) \right]^{\theta}  + \rho_{G, V}(tu)-Ct^{\mu}\|u\|^{p^{+}\theta}_{p^{+}\theta} + \dfrac{C}{V_0}\rho_{G, V}(tu) + Ct^{2}\|u\|^{2}_{L^{2}(\mathbb{R}^{N})} \\ & \leq  \mathcal{M}(1)t^{p^{+}\theta} + t^{p^{+}}\|u\|_{G, V}^{p^{+}} -Ct^{p^{+}\theta}\|u\|^{p^{+}\theta}_{p^{+}\theta} + Ct^{2}\|u\|^{2}_{2} \to -\infty,
\end{split}
\end{equation*}as $t\to \infty$ since $p^{+}\theta >2$ and by the choice of $C$.

\section{Appendix}\label{APP}
\subsection{Inequality for N-functions}
\begin{lemma}\label{inequality g}
Suppose that $G$ is an N-function satisfying \eqref{G1}, \eqref{cG0} and $(HG)_1$. There exists a constant $C>0$ such that for all $a, b \in \mathbb{R}^{N}$ we have
$$\left\langle \dfrac{g(|a|)}{|a|}a-\dfrac{g(|b|)}{|b|}b, a-b \right\rangle \geq CG(|a-b|).$$
\end{lemma}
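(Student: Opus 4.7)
The plan is to express the left-hand side as a line integral of the Hessian of the convex potential $\Phi(x) := G(|x|)$, whose gradient is precisely $\mathbf{g}(x) := (g(|x|)/|x|)\,x$, and then invoke pointwise ellipticity coming from \eqref{cG0}. Setting $\gamma(t) := b + t(a-b)$ for $t \in [0,1]$, the fundamental theorem of calculus gives
$$\left\langle \mathbf{g}(a) - \mathbf{g}(b),\, a - b\right\rangle = \int_0^1 \left\langle D\mathbf{g}(\gamma(t))(a-b),\, a-b\right\rangle\, dt.$$
A direct differentiation of $\mathbf{g}$ yields, for $x \neq 0$,
$$D\mathbf{g}(x) = \frac{g(|x|)}{|x|}\left(I - \frac{x\otimes x}{|x|^2}\right) + g'(|x|)\,\frac{x\otimes x}{|x|^2}.$$
Since \eqref{cG0} gives $tg'(t) \geq (p^- - 1)g(t)$, decomposing any $w$ into its components parallel and orthogonal to $x$ produces the pointwise bound
$$\langle D\mathbf{g}(x)\,w,\, w\rangle \geq c_0\,\frac{g(|x|)}{|x|}\,|w|^2, \qquad c_0 := \min\{1,\, p^- - 1\} > 0.$$
Inserting this into the integral identity yields the key estimate
$$\left\langle \mathbf{g}(a) - \mathbf{g}(b),\, a - b\right\rangle \geq c_0\,|a-b|^2 \int_0^1 \frac{g(|\gamma(t)|)}{|\gamma(t)|}\, dt.$$

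To conclude that the right-hand side dominates a multiple of $G(|a-b|)$, I would perform a case analysis on the ratio of $r := |a-b|$ to $R := \max\{|a|, |b|\}$. If $R \leq 2r$, at least one endpoint of $\gamma$ has modulus comparable to $r$, so by continuity there exists a subinterval $J \subset [0, 1]$ of universally bounded length on which $|\gamma(t)| \asymp r$; on $J$, the doubling estimates \eqref{gg product}--\eqref{G product} combined with \eqref{G1} convert $g(|\gamma(t)|)/|\gamma(t)|$ into a constant multiple of $G(r)/r^2$, so that the prefactor $|a-b|^2 = r^2$ reconstitutes $G(r)$. If instead $R > 2r$, the path $\gamma$ is trapped in the annulus $\{R/2 \leq |y| \leq 3R/2\}$, and the monotonicity consequences of \eqref{cG0} --- in particular that $t \mapsto G(t)/t^{p^-}$ is non-decreasing --- allow one to compare $g(R)/R$ with $g(r)/r$ and close the estimate via \eqref{G1}.

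The main obstacle will be the second case: when $r \ll R$ the prefactor $|a-b|^2 = r^2$ is very small while $g(R)/R$ is not automatically large enough to compensate, so the full doubling and $\Delta_2$ structure of $G$ given by \eqref{G1} and \eqref{cG0} must be exploited carefully in order to transfer the familiar Simon-type inequality for the $p$-Laplacian --- whose super-quadratic estimate $\langle |a|^{p-2}a - |b|^{p-2}b,\, a-b\rangle \geq c|a-b|^p$ is cleanest for $p\geq 2$ --- to the general Orlicz setting governed by both growth exponents $p^{\pm}$. A minor separate issue is the singularity of $D\mathbf{g}$ at the origin, which is handled either by an approximation argument or by noting that $\mathbf{g}$ extends continuously by $\mathbf{g}(0) = 0$ and the inequality is trivial when $a = b$.
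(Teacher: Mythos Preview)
Your line--integral setup and the ellipticity bound
\[
\left\langle \mathbf{g}(a)-\mathbf{g}(b),\,a-b\right\rangle \;\geq\; c_0\,|a-b|^2\int_0^1 \frac{g(|\gamma(t)|)}{|\gamma(t)|}\,dt
\]
are exactly what the paper obtains; up to this point the two proofs coincide.

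The genuine gap is in your case $R>2r$. There the path lies in the annulus $\{R/2\le |y|\le 3R/2\}$, so the integral is $\asymp g(R)/R$ and you must show $r^{2}\,g(R)/R\gtrsim G(r)$. The monotonicity of $t\mapsto G(t)/t^{p^-}$ that you invoke only yields
\[
r^{2}\,\frac{g(R)}{R}\;\gtrsim\;\Big(\frac{R}{r}\Big)^{p^{-}-2}G(r),
\]
which is useless when $p^{-}<2$. This is not a technicality: with only \eqref{G1}, \eqref{cG0} and $(HG)_1$ the statement is \emph{false}. Take $G(t)=t^{p}$ with $1<p<2$, $a=(R,0,\dots,0)$, $b=(R,1,0,\dots,0)$; then the left--hand side equals $p(R^{2}+1)^{(p-2)/2}\to 0$ as $R\to\infty$, while $G(|a-b|)=1$. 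So no amount of ``careful doubling'' can rescue this case.

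What is missing is the standing hypothesis $(HG)_3$ (convexity of $t\mapsto G(\sqrt{t})$), which the paper's proof uses but the lemma statement omits. The paper organises the split as $|a|\ge|a-b|$ versus $|a|<|a-b|$ and, in the second case, applies Jensen's inequality for the convex map $t\mapsto G(\sqrt{t})$ to pass from $\int_0^1 G(|\gamma(s)|)\,ds$ to $G\big(\sqrt{\int_0^1|\gamma(s)|^2\,ds}\big)$. Equivalently, $(HG)_3$ says $t\,g'(t)\ge g(t)$, i.e.\ $t\mapsto g(t)/t$ is non--decreasing; once you have that, your annulus case closes in one line:
\[
r^{2}\,\frac{g(R)}{R}\;\ge\; r^{2}\,\frac{g(r)}{r}\;=\;r\,g(r)\;\ge\;p^{-}G(r).
\]
You should therefore add $(HG)_3$ to the hypotheses and use it explicitly in the large--$R$ regime.
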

\begin{proof}We start with some calculations: 
\begin{equation*}
\begin{split}&\left\langle \dfrac{g(|a|)}{|a|}a-\dfrac{g(|b|)}{|b|}b, a-b \right\rangle \\ & \qquad =\left\langle \int_0^{1}\frac{d}{ds}\left(\dfrac{g(|sa+(1-s)b|)}{|sa+(1-s)b|}(sa+(1-s)b)\right)\,ds, a-b \right\rangle \\ & \qquad = \int_0^{1}\dfrac{g'(|sa+(1-s)b|)|sa+(1-s)b|^{2}-g(|sa+(1-s)b|)|sa+(1-s)b|}{|sa+(1-s)b|^{2}}|a-b|^{2}\,ds \\ & \qquad + \int_0^{1}\dfrac{g(|sa+(1-s)b|)}{|sa+(1-s)b|}|a-b|^{2}\,ds \\ & \qquad \geq  (p^{-}-1)\int_0^{1}\dfrac{g(|sa+(1-s)b|)}{|sa+(1-s)b|}|a-b|^{2}\,ds \qquad (\text{ by }\, \eqref{cG0}). 
\end{split}
\end{equation*}As in \cite[Lemma 4.4]{dB}, we consider some cases. If $|a| \geq |a-b|$, then
$$ |sa+(1-s)b| \geq ||a|-(1-s)|a-b||\geq s |a-b|.$$Thus,
\begin{equation*}
\begin{split}
\int_0^{1}\dfrac{g(|sa+(1-s)b|)}{|sa+(1-s)b|}|a-b|^{2}\,ds &\geq \dfrac{1}{p^{-}-1}\int_0^{1}g'(|sa+(1-s)b|)|a-b|^{2}\,ds\\ & \geq \dfrac{1}{p^{-}-1}\int_0^{1}g'(s|a-b|)|a-b|^{2}\,ds \\& =\frac{1}{p^{-}-1}|a-b|g(s|a-b|)\bigg\vert_0^{1} \\ & \geq \dfrac{p^{-}}{p^{-}-1}G(|a-b|).
\end{split}
\end{equation*}On the other hand, if $|a|< |a-b|$, then
\begin{equation*}
\begin{split}
&\int_0^{1}\dfrac{g(|sa+(1-s)b|)}{|sa+(1-s)b|}|a-b|^{2}\,ds \geq p^{-}\int_0^{1}\dfrac{G(|sa+(1-s)b|)}{|sa+(1-s)b|^{2}}|a-b|^{2}\,ds \\ & \geq  \frac{p^{-}}{2}\int_0^{1}G(|sa+(1-s)b|)\,ds \,\quad(\text{since }|sa+(1-s)b|^{2}\leq (2-s)|a-b|^{2})\\ & =\dfrac{p^{-}}{2}\int_0^{1}G\left(\sqrt{|sa+(1-s)b|^{2}} \right)\,ds \\& \geq \dfrac{p^{-}}{2}G\left( \sqrt{\int_0^{1}|sa+(1-s)b|^{2}}ds\right) \\& =\dfrac{p^{-}}{2}G\left( \sqrt{\dfrac{|a|^{2}+\left\langle a, b \right\rangle + |b|^{2} }{3}}ds\right)  \\ & \geq CG(|a-b|),
\end{split}
\end{equation*}where in the last inequality we have used that
$$|a|^{2}+\left\langle a, b \right\rangle + |b|^{2} \geq \frac{1}{4}|a-b|^{2}.$$This finishes the proof. 
\end{proof}
\subsection{A convergence principle in Orlicz spaces}
\begin{proposition}\label{convergence result}
Suppose that $u_n \to u$ in two Orlicz-Lebesgue spaces $L_V^{G}(\mathbb{R}^{N})$ and $L_V^{B}(\mathbb{R}^{N})$. Then, there exists a function $h \in L_V^{G}(\mathbb{R}^{N}) \cap L_V^{B}(\mathbb{R}^{N})$ and a subsequence $u_{n_i}$ such that
$$|u_{n_i}(x)| \leq h(x) \quad a.e..$$
\end{proposition}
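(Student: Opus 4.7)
The plan is to adapt the classical $L^p$ dominated-subsequence argument (due to Riesz--Fischer) to the Orlicz setting, using that both $L_V^G$ and $L_V^B$ are Banach spaces with Luxemburg norms satisfying the triangle inequality (because $G$ and $B$ satisfy $\Delta_2$).

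First I would extract a subsequence with fast convergence in \emph{both} norms simultaneously. Since $u_n\to u$ in $L_V^G(\mathbb{R}^N)$, pick indices $n_1<n_2<\cdots$ with $\|u_{n_{k+1}}-u_{n_k}\|_{G,V}\le 2^{-k}$. Because the sequence $\{u_{n_k}\}$ still converges to $u$ in $L_V^B(\mathbb{R}^N)$, I can pass to a further subsequence (which I relabel $u_{n_k}$) satisfying in addition $\|u_{n_{k+1}}-u_{n_k}\|_{B,V}\le 2^{-k}$. This double extraction is the key preparatory step; everything afterwards is a single construction that works in both spaces in parallel.

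Next, I define the candidate dominating function by telescoping:
\[
h(x):=|u_{n_1}(x)|+\sum_{k=1}^{\infty}|u_{n_{k+1}}(x)-u_{n_k}(x)|.
\]
The pointwise bound is immediate from the triangle inequality in $\mathbb{C}$: writing $u_{n_k}=u_{n_1}+\sum_{j=1}^{k-1}(u_{n_{j+1}}-u_{n_j})$ gives $|u_{n_k}(x)|\le h(x)$ for every $k$ and every $x$ where the series converges. It remains to show $h\in L_V^G\cap L_V^B$ and that the series converges a.e.

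For the membership, I would work with the partial sums $h_N(x):=|u_{n_1}(x)|+\sum_{k=1}^{N}|u_{n_{k+1}}(x)-u_{n_k}(x)|$. By the triangle inequality in the Luxemburg norm,
\[
\|h_N\|_{G,V}\le \|u_{n_1}\|_{G,V}+\sum_{k=1}^{N}\|u_{n_{k+1}}-u_{n_k}\|_{G,V}\le \|u_{n_1}\|_{G,V}+\sum_{k=1}^{\infty}2^{-k}=:C_G,
\]
and the analogous bound $\|h_N\|_{B,V}\le C_B$ holds. By Lemma \ref{comp norm modular} this forces $\rho_{G,V}(h_N)\le \max\{C_G^{p^-},C_G^{p^+}\}$ uniformly in $N$, and similarly for $\rho_{B,V}$. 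Since $h_N\uparrow h$ pointwise, monotone convergence yields $\rho_{G,V}(h)<\infty$ and $\rho_{B,V}(h)<\infty$, so $h\in L_V^G\cap L_V^B$; in particular $h$ is a.e.\ finite, making the telescoping series convergent a.e. Combining with the pointwise bound established above gives $|u_{n_k}(x)|\le h(x)$ a.e., which is the conclusion.

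The only subtle point is coordinating the subsequence extraction so that a single subsequence works for both norms; this is handled once and for all by the two-step diagonal choice in the first paragraph. Everything else is a direct transcription of the $L^p$ proof, replacing absolute powers by the modulars $\rho_{G,V}$ and $\rho_{B,V}$ and invoking $\Delta_2$ via Lemma \ref{comp norm modular} to pass freely between norm and modular.
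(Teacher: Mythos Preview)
Your proof is correct and follows essentially the same Riesz--Fischer strategy as the paper: extract a fast-converging subsequence, telescope, and pass to the limit in the modular via monotone convergence (the paper uses Fatou, which is equivalent here). The only cosmetic difference is that the paper extracts the subsequence in a single step---since $u_n$ is Cauchy in both norms simultaneously, one can directly choose $n_i<n_{i+1}$ with both $\|u_{n_{i+1}}-u_{n_i}\|_{G,V}<2^{-i}$ and $\|u_{n_{i+1}}-u_{n_i}\|_{B,V}<2^{-i}$; your two-step extraction also works, but note that after passing to the further subsequence the $G$-bound becomes $\le 2^{-k+1}$ rather than $2^{-k}$, which is harmless for your argument.
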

\begin{proof}
Since $u_n$ is a Cauchy sequence in both spaces, for each positive integer $i$, there exist $n_i < n_{i+1}$ such that
$$\|u_{n_{i+1}}-u_{n_{i}}\|_{G, V}< 2^{-i} \quad \text{and }\quad\|u_{n_{i+1}}-u_{n_{i}}\|_{B, V}< 2^{-i}.$$Let 
$$f_k(x)=\sum_{i=1}^{k}|u_{n_{i+1}}(x)-u_{n_{i}}(x)|.$$Then,
$$\|f_k\|_{G, V}<1 \quad \text{and }\quad \|f_k\|_{B, V}<1.$$Hence, by Lemma \ref{comp norm modular}
$$\rho_{G, V}(f_k) < 1 \quad \text{and }\quad \rho_{B, V}(f_k) < 1.$$Letting 
$$f(x)= \sum_{i=1}^{\infty}|u_{n_{i+1}}(x)-u_{n_{i}}(x)|,$$we get by Fatou's Lemma, that
$$\rho_{G, V}(f) < 1 \quad \text{and }\quad \rho_{B, V}(f) < 1.$$As a result, $f \in  L_V^{G}(\mathbb{R}^{N}) \cap L_V^{B}(\mathbb{R}^{N})$. Putting
$$h=u_{n_1}+f$$we get $|u_{n_i}|\leq h$ a.e. and $h \in  L_V^{G}(\mathbb{R}^{N}) \cap L_V^{B}(\mathbb{R}^{N}).$ This ends the proof.
\end{proof}

\end{document}